\documentclass[reqno]{amsart}
\numberwithin{equation}{section}

\usepackage[colorlinks,linkcolor={blue}]{hyperref}

\usepackage{bm}
\usepackage{amssymb}
\usepackage{graphicx} 
\usepackage{amscd}
\usepackage{enumerate}
\usepackage{cancel}
\usepackage[font=footnotesize]{caption}
\usepackage{dsfont}
\usepackage[colorinlistoftodos]{todonotes}
\usepackage{tikz-cd}

\definecolor{darkred}{rgb}{1,0,0} 
\definecolor{darkgreen}{rgb}{0,0.8,0}
\definecolor{darkblue}{rgb}{0,0,1}

\hypersetup{colorlinks,
linkcolor=darkblue,
filecolor=darkgreen,
urlcolor=darkred,
citecolor=darkgreen}

\makeatletter
\providecommand\@dotsep{5}
\makeatother

\definecolor{orange}{RGB}{253,85,0}
\definecolor{darkgreen}{RGB}{0,95,10}

\newcommand{\gray}[1]{{\textcolor{gray}{#1}}}
\newcommand{\dgray}[1]{{\textcolor{darkgray}{#1}}}

 \newcommand{\shift}{\mathrm{shift}}
 \newcommand{\N}{\mathds{N}}
 \newcommand{\Z}{\mathds{Z}}
 \newcommand{\R}{\mathds{R}}
 \newcommand{\C}{\mathds{C}}
 \newcommand{\lu}{\ell^{u}}
 \newcommand{\ls}{\ell^{s}}
 \newcommand{\F}{\mathcal{F}}
 \newcommand{\Wu}{W^{u}}
 \newcommand{\Ws}{W^{s}}

 \newcommand{\Eu}{E^{u}}
 \newcommand{\Es}{E^{s}}
 \newcommand{\Kbr}{K_{\mathrm{br}}}
 \newcommand{\Krad}{K_{\mathrm{rad}}}

 \newcommand{\FF}{\mathcal{F}}

 \newcommand{\GG}{\mathcal{G}}

 \newcommand{\RR}{\mathcal{R}}

 \newcommand{\area}{\mathrm{area}}

 \DeclareMathOperator{\interior}{int}

 \DeclareMathOperator*{\Fix}{Fix}
 \DeclareMathOperator*{\toup}{\longrightarrow}

 \theoremstyle{plain}
 \newtheorem{MainThm}{Theorem}
 
 \newtheorem{Thm}{Theorem}[section]
 
 \newtheorem{Lemma}[Thm]{Lemma}
 \newtheorem{Cor}[Thm]{Corollary}
 
 \theoremstyle{definition}

\title[Existence of Birkhoff sections]{Existence of Birkhoff sections for Kupka-Smale Reeb flows of closed contact 3-manifolds} 

\author{Gonzalo Contreras}
\address{Gonzalo Contreras\newline\indent 
Centro de Investigaci\'on en Matem\'aticas\newline\indent 
A.P. 402, 36.000, Guanajuato, GTO, Mexico}
\email{gonzalo@cimat.mx}

\thanks{Gonzalo Contreras is partially supported by CONACYT, Mexico, grant A1-S-10145. Marco Mazzucchelli is partially supported by the SFB/TRR 191 ``Symplectic Structures in Geometry, Algebra and Dynamics'',
funded by the Deutsche Forschungsgemeinschaft.}

\author{Marco Mazzucchelli}
\address{Marco Mazzucchelli\newline\indent CNRS, UMPA, \'Ecole Normale Sup\'erieure de Lyon\newline\indent 46 all\'ee d'Italie, 69364 Lyon, France}
\email{marco.mazzucchelli@ens-lyon.fr}

\date{November 11, 2021}

\keywords{Reeb flows, geodesic flows, surfaces of section, Birkhoff sections}

\subjclass[2010]{53D10, 37D40, 53C22}

\begin{document}

\begin{abstract}
A Reeb vector field satisfies the Kupka-Smale condition when all its closed orbits are non-degenerate, and the stable and unstable manifolds of its hyperbolic closed orbits intersect transversely.
We show that, on a closed 3-manifold, any Reeb vector field satisfying the Kupka-Smale condition admits a Birkhoff section. In particular, this implies that the Reeb vector field of a $C^\infty$-generic contact form on a closed 3-manifold admits a Birkhoff section, and that the geodesic vector field of a $C^\infty$-generic Riemannian metric on a closed surface admits a Birkhoff section.

\tableofcontents
\end{abstract}

\maketitle

\section{Introduction}
\label{s:intro}

Surfaces of section are fundamental tools that allow to reduce the study of the dynamics of a vector field $X$ on a 3-dimensional closed manifold $N$ to the study of the dynamics of a surface diffeomorphism. Formally, they are immersed compact surfaces $\Sigma\looparrowright N$ whose interior $\interior(\Sigma)$ is embedded and transverse to $X$, and whose boundary $\partial\Sigma$ is tangent to $X$ (that is, $\partial\Sigma$ is the covering map of a finite collection of closed orbits of $X$). In order to carry out the above mentioned reduction without loosing any information on the dynamics, the following extra property should hold: if $\phi_t:N\to N$ denotes the flow of $X$, for some $T>0$, any flow segment $\phi_{[0,T]}(z)$ must intersect $\Sigma$. Surfaces of section satisfying this extra property are called \emph{Birkhoff sections}. This notion originates from the seminal work of
Poincar\'e (see Conley~\cite{Conley63}) and  Birkhoff \cite{Birk3, Birkhoff:1966uu} in 
celestial mechanics.
In particular, Birkhoff showed that any simple closed geodesic of a positively curved Riemannian 2-sphere produces a surface of section (indeed,  an embedded one) diffeomorphic to an annulus for its geodesic vector field. 
This annulus-like surface of section was a crucial ingredient for the proof of the existence of infinitely many closed geodesics on any Riemannian 2-sphere, a celebrated statement that follows from the combination of the work of Bangert~\cite{Bangert:1993ue} and
Franks~\cite{Franks:1992wu}.
By a result of Fried \cite{Fried:1983uj}, any transitive Anosov vector field on a closed 3-manifold admits a Birkhoff section. In the same paper, Fried also showed a construction due to Birkhoff of a surface
of section of genus one for the geodesic flow of negatively curved closed Riemannian surfaces.

In symplectic dynamics, the quest of Birkhoff sections attracted plenty of interest in the last few decades. In their celebrated paper \cite{Hofer:1998vy}, Hofer, Wysocki, and Zehnder showed that the canonical Reeb flow on any convex 3-sphere embedded in $\R^4$ admits an embedded Birkhoff section diffeomorphic to a disk.  
An outstanding application of this result, combined with a result of Franks \cite{Franks:1992wu} on area-preserving surface homeomorphisms, is that such Reeb flows must have either exactly two or infinitely many closed Reeb orbits. 
In another celebrated paper \cite{Hofer:2003wf}, Hofer, Wysocki and Zehnder proved that any Kupka-Smale tight contact form in the 3-sphere admits a finite energy foliation, which is a generalization of the notion of open book decomposition. When such a finite energy foliation is not an ordinary open book, they showed the existence of homoclinics to certain binding closed Reeb orbits, which implies the exponential growth of closed orbits and even the positivity of the topological entropy of the Reeb flow.

While we were completing the present paper, in a talk at the Symplectic Geometry Zoominar \cite{Hryniewicz:2021ua}, Hryniewicz announced the following existence result for Birkhoff sections, joint with Colin, Dehornoy, and Rechtman.

\begin{Thm}[Colin, Dehornoy, Hryniewicz, Rechtman]
\label{t:CDHR}
On a closed 3-manifold $N$, a $C^2$-generic contact form admits a Birkhoff section for its Reeb vector field. If $N$ is a homology 3-sphere, the assertion holds for a $C^\infty$ generic contact form.
\end{Thm}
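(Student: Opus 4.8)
The plan is to obtain Theorem~\ref{t:CDHR} as a corollary of the main result established in this paper — that \emph{every} Reeb vector field on a closed $3$-manifold satisfying the Kupka-Smale condition admits a Birkhoff section — by combining it with the genericity of the Kupka-Smale condition among contact forms. Thus, on a closed $3$-manifold $N$, the proof splits into two independent parts: (i) a $C^\infty$-generic (hence a fortiori a $C^2$-generic) contact form on $N$ has a Kupka-Smale Reeb vector field; and (ii) the Kupka-Smale existence theorem itself. Part (i) is the standard half and I sketch it next; part (ii) carries the entire difficulty and I turn to it last. This route actually delivers the $C^\infty$-generic statement on \emph{every} closed $3$-manifold, which strengthens Theorem~\ref{t:CDHR}.

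For (i) I would proceed in two perturbative steps, always staying in the contact category by deforming $\alpha$ to $e^{f}\alpha$ with $f$ supported in a small flow box around a segment of a closed orbit. First, the set of contact forms all of whose closed Reeb orbits are non-degenerate is $C^\infty$-residual: for each $T \in \N$, the subset of forms with only non-degenerate closed orbits of period $\le T$ is open (compactness of the set of closed orbits of bounded period together with the implicit function theorem) and dense (a localized conformal perturbation unfolds a degenerate orbit), and one intersects over $T$. Second, granting non-degeneracy, the at most countably many hyperbolic closed orbits carry $1$-dimensional stable and unstable separatrices in the contact plane field, and the set of contact forms for which every ordered pair of such separatrices meets transversally is again $C^\infty$-residual, by the classical Kupka-Smale argument of tilting one separatrix with a perturbation localized along a connecting orbit arc, performed pair by pair and intersected. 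Intersecting the two residual sets produces the residual set of Kupka-Smale contact forms; the same arguments in the $C^2$ topology give the $C^2$ statement.

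The core of the paper, and the step I expect to be the main obstacle, is (ii): manufacturing a Birkhoff section from the Kupka-Smale data. The approach is to start from a \emph{system of transverse surfaces} — finitely many compact immersed surfaces, transverse to the flow on their interiors with boundary on closed orbits, such that every orbit crosses the system in uniformly bounded time — available from the broken book decompositions and finite energy foliations of Hofer--Wysocki--Zehnder and Colin--Dehornoy--Rechtman, and then to upgrade it to a genuine Birkhoff section by cut-and-paste surgeries along the binding closed orbits, in the spirit of the classical constructions of Birkhoff and Fried. The transversality of the invariant manifolds supplied by the Kupka-Smale hypothesis is exactly what makes these surgeries possible: a transverse homoclinic or heteroclinic intersection can be thickened into an honest transverse strip, and that strip is used to splice together the pieces of the system that would otherwise accumulate onto a binding orbit. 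The delicate points are quantitative. One must keep the first return time to the surgered surface uniformly bounded, so that the defining property of a Birkhoff section is not destroyed by the surgery; and one must treat separately the elliptic and parabolic closed orbits, where no stable or unstable manifolds are available, presumably by enclosing each in a thin invariant annulus cut out of the surface system and surgering along it. Carrying out all of these surgeries coherently, across the whole collection of closed orbits that could obstruct the construction, is where the weight of the argument will lie.
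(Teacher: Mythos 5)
Your overall skeleton is legitimate, but you should know it is not how the quoted theorem is proved by its authors, nor is it "proved" in this paper at all: Theorem~\ref{t:CDHR} is an announced result of Colin, Dehornoy, Hryniewicz and Rechtman, whose own argument (as sketched in the introduction) combines broken book decompositions obtained via embedded contact homology with Schwartzman--Fried--Sullivan asymptotic cycles, Irie's equidistribution theorem, and an action-linking identity. What you propose -- genericity of the Kupka--Smale condition plus the statement that every Kupka--Smale Reeb flow on a closed contact 3-manifold admits a Birkhoff section -- is precisely the route by which the paper deduces its Corollary generalizing Theorem~\ref{t:CDHR} from the main Theorem~\ref{t:Birkhoff_section}; your part (i) is standard and matches what the paper cites (Peixoto-type arguments, Robinson, and Contreras--Paternain for geodesic flows), and this route indeed gives the stronger $C^r$-generic statement on all closed 3-manifolds.

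The genuine gap is part (ii), which you correctly identify as carrying the entire weight but then only describe heuristically. ``Cut-and-paste surgeries using transverse homoclinics'' is not, by itself, an argument; the paper's mechanism is a contradiction scheme with two precise halves. First, one takes a broken book decomposition with the \emph{minimal} number of broken binding components and shows (Lemma~\ref{l:no_homoclinics_in_all_separatrices}) that if some broken binding component had transverse homoclinics in all four separatrices, then a horseshoe, Fried's pair-of-pants construction, and the surgery Lemma~\ref{l:surgery} would produce a broken book with strictly fewer broken components -- contradicting minimality. Second, and this is the hard analytic core, one proves (Theorem~\ref{t:homoclinics}) that under the Kupka--Smale condition \emph{every} broken binding component does have homoclinics in all separatrices and satisfies $\overline{\Ws(\gamma)}=\overline{\Wu(\gamma)}$; the proof rests on area estimates with $d\lambda$ (Lemma~\ref{l:large_area}), Poincar\'e recurrence for the area-preserving return maps to a page, the $\lambda$-lemma, and a finite-chain argument through the heteroclinics of Lemma~\ref{l:heteroclinics}. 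None of these ingredients, nor the minimality dichotomy that makes the surgery conclusive, appears in your sketch. In addition, two of your ``delicate points'' are misdirected: elliptic closed orbits are radial binding components at which the pages already arrive radially, so no surgery is needed there, and parabolic orbits are excluded by non-degeneracy; and the uniform bound on return times is automatic by compactness once the broken binding is shown to be empty, since the broken book then degenerates to a rational open book, every page of which is a Birkhoff section.
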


The main ingredients for the proof of this theorem are the  broken book decompositions. This is a generalization of the notion of rational open book decomposition, inspired by Hofer, Wysocki and Zehnder's finite energy foliations, and recently introduced by Colin, Dehornoy, and Rechtman \cite{Colin:2020tl}. The existence of such broken book decompositions for Reeb flows with non-degenerate closed Reeb orbits is based on techniques from embedded contact homology \cite{Hutchings:2014vp}. Other ingredients for Theorem~\ref{t:CDHR}, as sketched during Hryniewicz's talk, are the asymptotic cycles of Schwartzman, Fried, and Sullivan \cite{Schwartzman:1957ut, Sullivan:1976uq, Fried:1982uh}, Irie's equidistribution theorem \cite{Irie:2018wm}, and an action-linking relation due to Bechara Senior, Hryniewicz, and Salom\~ao \cite{Bechara-Senior:2021tq}.

In this paper, we establish the existence of Birkhoff section for $C^\infty$-generic contact forms on arbitrary closed contact 3-manifolds. Actually, our result will not be a perturbative one: it will apply to closed contact 3-manifolds satisfying the \emph{Kupka-Smale condition}, meaning that all closed Reeb orbits are non-degenerate (i.e.~their Floquet multipliers are not complex roots of unity), and the stable and unstable manifolds of the hyperbolic closed Reeb orbits intersect transversely.

\begin{MainThm}
\label{t:Birkhoff_section}
Any closed contact 3-manifold satisfying the Kupka-Smale condition admits a Birkhoff section for its Reeb flow.
\end{MainThm}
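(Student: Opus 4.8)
The natural point of departure is the broken book decomposition theory of Colin, Dehornoy, and Rechtman~\cite{Colin:2020tl}: since every closed Reeb orbit is non-degenerate, the Reeb flow admits an adapted broken book decomposition, with binding a finite union $K=\Krad\sqcup\Kbr$ of closed Reeb orbits and pages positively transverse to the Reeb vector field, such that every closed Reeb orbit not contained in $K$ crosses the interior of a page. The radial binding orbits $\Krad$ have the local structure of the binding of an ordinary rational open book, whereas each broken binding orbit $\gamma\in\Kbr$ is hyperbolic and, in a tubular neighborhood of $\gamma$, the pages alternate between finitely many rigid half-pages --- which are pieces of the local invariant manifolds $\Ws(\gamma)$ and $\Wu(\gamma)$ --- and finite families of pages rotating around $\gamma$. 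If $\Kbr=\emptyset$ the decomposition is a rational open book and any page is a Birkhoff section (a classical fact for open books adapted to Reeb flows), so the crux is to eliminate the broken binding orbits.

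The plan is to remove the broken binding orbits one at a time, inducting on $|\Kbr|$. Fix $\gamma\in\Kbr$. The Kupka-Smale hypothesis provides transverse intersections of the invariant manifolds of $\gamma$ with the invariant manifolds of the hyperbolic orbits of $K$, hence in particular transverse heteroclinic (or homoclinic) Reeb orbits. I would use a finite collection of these connecting orbits to truncate compact pieces of $\Ws(\gamma)$ and $\Wu(\gamma)$ and glue them, across the connecting orbits, onto a finite subfamily of the rotating pages near $\gamma$; since by the inclination lemma ($\lambda$-lemma) these truncated sheets accumulate precisely onto the rigid half-pages, the gluing can be arranged to be smooth and positively transverse to the Reeb flow. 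The effect is to replace $\gamma$ by binding of radial type (or to absorb it into the boundary), yielding an adapted broken book decomposition with one fewer broken binding orbit; after finitely many steps we reach the open book case. Along the way one must keep the interior of the surface embedded and its boundary a finite union of iterated closed orbits: the unavoidable self-intersections of the invariant-manifold sheets are pushed onto the boundary, and transversality is used to control the overlaps between sheets issuing from different orbits.

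The step I expect to be the main obstacle is verifying, at each stage, that every Reeb orbit --- not merely every periodic one --- crosses the enlarged surface within uniformly bounded time. Orbits confined to a compact set disjoint from small tubular neighborhoods of $\Kbr$ are handled by the page-crossing property of the broken book together with a compactness argument. The delicate case is that of orbits entering a neighborhood of a broken binding orbit $\gamma$: such an orbit shadows the invariant manifolds of $\gamma$, which have now been incorporated into the surface, and one has to show --- using the local hyperbolic dynamics and the $\lambda$-lemma --- that it cannot spiral indefinitely between consecutive sheets without meeting the surface. This is exactly where transversality of the invariant manifolds is indispensable: it forces the truncated stable and unstable sheets, together with the interpolating rotating pages, to fill up a whole neighborhood of $\gamma$, leaving no room for trapped orbits. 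Managing this local-to-global interplay, and in particular the combinatorics of how chains of transverse heteroclinic connections among several broken binding orbits fit together, is the technical heart of the argument.
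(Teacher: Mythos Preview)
Your overall framework---start from a Colin--Dehornoy--Rechtman broken book and reduce $|\Kbr|$ by surgery---matches the paper, but there is a genuine gap in the inductive step.

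First, a misattribution: the Kupka--Smale condition does \emph{not} provide heteroclinic or homoclinic orbits; it only guarantees that any intersection $\Wu(\gamma_1)\cap\Ws(\gamma_2)$ that happens to exist is transverse. The existence of heteroclinics between broken binding components comes instead from the broken book structure itself (Lemma~\ref{l:heteroclinics}, due to Colin--Dehornoy--Rechtman). So your sentence ``the Kupka--Smale hypothesis provides transverse intersections~\dots'' conflates two separate inputs.

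Second, and more seriously, your surgery is underspecified and the obstacle you flag is not the real one. You propose to glue ``compact pieces of $\Ws(\gamma)$ and $\Wu(\gamma)$'' onto nearby pages, but these invariant manifolds are \emph{tangent} to the Reeb vector field, so pieces of them are not transverse to the flow; it is not clear what surface you are actually building or why its interior is embedded and transverse. The paper's route is different and more concrete: one shows (Theorem~\ref{t:homoclinics}) that every broken binding orbit $\gamma$ has transverse homoclinics in \emph{all four} separatrices---both unstable branches meet $\Ws(\gamma)$ and both stable branches meet $\Wu(\gamma)$. This is the technical heart of the argument, and it is exactly here that Kupka--Smale is used, via a $\lambda$-lemma/area argument showing $\overline{\Ws(\gamma)}=\overline{\Wu(\gamma)}$ (Lemmas~\ref{l:large_area}--\ref{l:homoclinics_heteroclinics}). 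Once homoclinics exist in all separatrices, Fried's horseshoe construction produces a specific immersed pair of pants $\Sigma$ whose interior is transverse to $X$, meets $\gamma$, and whose boundary consists of three new closed orbits disjoint from $K$; feeding $\Sigma$ into the surgery Lemma~\ref{l:surgery} then yields a broken book with broken binding $\Kbr\setminus\gamma$. The uniform return-time issue you call the ``main obstacle'' is absorbed entirely into Lemma~\ref{l:surgery} and the broken book machinery; no separate shadowing or trapping analysis is needed once $\Sigma$ is in hand. What your outline is missing is precisely the step that forces homoclinics in every separatrix---without it, neither Fried's construction nor any variant of your gluing can proceed.
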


It is known that, for any $r\in\N\cup\{\infty\}$, a $C^r$ generic contact form satisfies the Kupka-Smale condition. This can be proved by extending the argument of Peixoto \cite{Peixoto:1967us} for general vector fields, and could also be obtained from the results of Robinson \cite{Robinson:1970tm} on generic Hamiltonian systems. Therefore, Theorem~\ref{t:Birkhoff_section} implies the following statement, that generalizes Theorem~\ref{t:CDHR}. We denote by $\FF^r(N)$ the space of smooth contact forms on a closed 3-manifold $N$, endowed with the $C^r$ topology.

\begin{Cor}
On any closed 3-manifold $N$, for any $r\in\N\cup\{\infty\}$, there exists a residual subset $\RR\subseteq\FF^r(N)$ such that, for every $\lambda\in\RR$, the Reeb flow of $(N,\lambda)$ admits a Birkhoff section.
\hfill\qed
\end{Cor}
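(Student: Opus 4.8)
The plan is simply to combine Theorem~\ref{t:Birkhoff_section} with the genericity of the Kupka-Smale condition, which is exactly what the paragraph preceding the statement asserts. Concretely, for fixed $r\in\N\cup\{\infty\}$, I would first invoke the Kupka-Smale theorem for Reeb flows, obtained by adapting Peixoto's argument~\cite{Peixoto:1967us} (or Robinson's results~\cite{Robinson:1970tm}): there is a residual subset $\RR\subseteq\FF^r(N)$ such that every $\lambda\in\RR$ has all closed Reeb orbits non-degenerate and all stable/unstable manifolds of its hyperbolic closed Reeb orbits mutually transverse. This residual set is what we take in the statement.

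For each $\lambda\in\RR$, the contact manifold $(N,\lambda)$ then satisfies the Kupka-Smale condition by definition, so Theorem~\ref{t:Birkhoff_section} applies directly and produces a Birkhoff section for the Reeb flow of $(N,\lambda)$. Since $\RR$ is residual in $\FF^r(N)$ and, $N$ being a closed manifold, $\FF^r(N)$ is a Baire space, $\RR$ is in particular dense; this completes the proof. The only genuinely non-formal ingredient is the genericity statement itself, and that is the part of the argument I would expect to require the most care: one must check, orbit by orbit, that non-degeneracy can be achieved by an arbitrarily small perturbation of the contact form supported near a segment of the orbit, and then that transversality of stable and unstable manifolds can likewise be achieved, all while staying inside the class of contact forms (rather than arbitrary vector fields). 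Both steps are standard transversality-theoretic arguments in the spirit of Peixoto, using the fact that a Reeb flow can be perturbed freely in a Darboux-type chart, so I would treat them as known and refer to the literature rather than reproducing them.
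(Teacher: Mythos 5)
Your proposal is correct and is exactly the paper's argument: the corollary follows immediately by combining the $C^r$-genericity of the Kupka-Smale condition for contact forms (Peixoto-type arguments, or Robinson) with Theorem~\ref{t:Birkhoff_section}, which is why the paper states it with no separate proof. The added remark about density via Baire category is unnecessary (residuality is all that is claimed) but harmless.
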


According to a theorem of the first author together with Paternain \cite{Contreras:2002vb}, the Kupka-Smale condition also holds for geodesic vector fields of $C^r$ generic Riemannian metrics, which are a special class of Reeb vector fields. Therefore, Theorem~\ref{t:Birkhoff_section} also has the following corollary. We denote by $\GG^r(M)$ the space of smooth Riemannian metrics on a closed surface $M$, endowed with the $C^r$ topology.

\begin{Cor}
On any closed surface $M$, for any $r\in\N\cup\{\infty\}$, there exists a residual subset $\RR\subseteq\GG^r(M)$ such that, for every $g\in\RR$, the geodesic flow of $(M,g)$ admits a Birkhoff section.
\end{Cor}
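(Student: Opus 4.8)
The plan is to deduce this statement directly from Theorem~\ref{t:Birkhoff_section}, via the classical fact that geodesic flows are Reeb flows, combined with the genericity of the Kupka-Smale condition within the class of geodesic flows.

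First I would recall the contact-geometric description of the geodesic flow. Fix a closed surface $M$ and a metric $g\in\GG^r(M)$. On the cotangent bundle $T^*M$, equipped with its tautological $1$-form $\theta$ and symplectic form $d\theta$, the unit cosphere bundle $S^*_gM=\{(q,p)\in T^*M: |p|_g=1\}$ is a closed $3$-manifold, the restriction $\lambda_g:=\theta|_{S^*_gM}$ is a contact form, and the Legendre transform induced by $g$ conjugates the Reeb flow of $\lambda_g$ with the geodesic flow of $(M,g)$ on the unit tangent bundle. For varying $g$ the manifolds $S^*_gM$ are mutually diffeomorphic (all are diffeomorphic to $S^*_{g_0}M$ for a fixed reference metric $g_0$, via radial rescaling along the cotangent fibres), so each $(S^*_gM,\lambda_g)$ is a closed contact $3$-manifold to which Theorem~\ref{t:Birkhoff_section} applies as soon as its Reeb flow, namely the geodesic flow of $(M,g)$, satisfies the Kupka-Smale condition.

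Next I would invoke the genericity result of the first author with Paternain \cite{Contreras:2002vb}: there is a residual subset $\RR\subseteq\GG^r(M)$ such that, for every $g\in\RR$, all closed geodesics of $(M,g)$ are non-degenerate and the stable and unstable manifolds of the hyperbolic closed geodesics intersect transversely. Under the conjugacy above this says precisely that the Reeb flow of $(S^*_gM,\lambda_g)$ satisfies the Kupka-Smale condition in the sense of the present paper: the Floquet multipliers of a closed geodesic and of the corresponding closed Reeb orbit coincide, so non-degeneracy is preserved, and transversality of the invariant manifolds is a conjugacy-invariant notion. Applying Theorem~\ref{t:Birkhoff_section} to $(S^*_gM,\lambda_g)$ then produces a Birkhoff section for its Reeb flow, and pushing it forward along the inverse Legendre transform yields a Birkhoff section for the geodesic flow of $(M,g)$, as desired.

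There is no genuine obstacle here; the only points needing a small amount of care are bookkeeping ones. One should check that the Kupka-Smale condition for geodesic vector fields as formulated in \cite{Contreras:2002vb} matches verbatim the condition appearing in Theorem~\ref{t:Birkhoff_section} (both ask for non-degeneracy of \emph{all} closed orbits together with transverse intersection of the stable and unstable manifolds of the hyperbolic ones), and one should note that the defining properties of a Birkhoff section — the interior being embedded and transverse to the vector field, the boundary being tangent to it, and the finite-return property — are all preserved by a flow conjugacy, here the Legendre transform. With these identifications in place the corollary follows immediately.
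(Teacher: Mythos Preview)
Your proposal is correct and follows exactly the route the paper takes: the corollary is stated immediately after noting that geodesic flows are Reeb flows and invoking the Contreras--Paternain genericity result \cite{Contreras:2002vb}, so that Theorem~\ref{t:Birkhoff_section} applies directly. Your write-up is simply a more detailed unpacking of the same argument, making explicit the Legendre-transform identification and the conjugacy-invariance of the relevant notions.
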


Our proof of Theorem~\ref{t:Birkhoff_section} still employs the broken book decompositions of closed contact 3-manifolds, but not the other above mentioned ingredients employed by Colin, Dehornoy, Hryniewicz, and Rechtman. 
The pages of a broken book decomposition are surfaces of section for the Reeb flow and, as Colin, Dehornoy, and Rechtman showed in their work \cite{Colin:2020tl}, surgery techniques due to Fried \cite{Fried:1983uj} can be applied to produce a Birkhoff section out of the pages of the broken book, provided some broken binding component has transverse homoclinics in all its stable and unstable separatrices. In our proof, we show that indeed, under the Kupka-Smale condition, \emph{every} broken binding component has transverse homoclinics in all its stable and unstable separatrices. Furthermore, we prove the following remarkable fact (Theorem~\ref{t:homoclinics}): for any broken binding component $\gamma$, the closures of the stable manifold $\Ws(\gamma)$ coincides with the closure of the unstable manifold $\Wu(\gamma)$. We also provide a simpler construction of a Birkhoff section in the special case of Kupka-Smale geodesic flows of closed surfaces (Theorem~\ref{t:Birkhoff_section_geodesic_flow}).

\subsection{Organization of the paper}

In Section~\ref{s:preliminaries} we recall the main required notions from Reeb dynamics, and the definition of broken book decomposition of a closed contact 3-manifold. In Section~\ref{s:construction}, we discuss the results that allow to produce, under suitable conditions, a Birkhoff section out of the pages of a broken book decomposition. In Section~\ref{s:geodesic_flows}, we prove Theorem~\ref{t:Birkhoff_section} in the special case of Kupka-Smale geodesic flows of closed contact manifolds. Finally, in Section~\ref{s:Reeb}, we prove Theorem~\ref{t:Birkhoff_section} in full generality.

\subsection{Acknowledgements}

This work was completed while Marco Mazzucchelli was a visitor at the Fakult\"at f\"ur Mathematik of the Ruhr-Universit\"at Bochum, Germany. He would like to thank Alberto Abbondandolo, Stefan Suhr, and Kai Zehmisch for their hospitality.

\section{Preliminaries}
\label{s:preliminaries}

\subsection{Kupka-Smale contact 3-manifolds}
\label{ss:KS}
Let $(N,\lambda)$ be a closed contact 3-man\-i\-fold. The contact form $\lambda$ is a 1-form on $N$ that defines a volume form $\lambda\wedge d\lambda$. The associated Reeb vector field $X$ is defined by the equations $\lambda(X)\equiv1$ and $d\lambda(X,\cdot)\equiv0$. We denote by $\phi_t:N\to N$ its flow, which is called the Reeb flow.

Let $\gamma(t):=\phi_t(z_0)$ be a closed Reeb orbit, that is, $\gamma(t)=\gamma(t+t_0)$ for some minimal period $t_0>0$. The Floquet multipliers of $\gamma$ are the eigenvalues of the linearized map $d\phi_{t_0}(z_0)|_{\ker(\lambda)}$. Since $\phi_{t_0}$ preserves the contact form $\lambda$, and since $d\lambda$ is symplectic over the contact distribution $\ker(\lambda)$, the Floquet multipliers come in pairs $\sigma,\sigma^{-1}\in\C\setminus\{0\}$. For each positive integer $k>0$, the closed orbit $\gamma$ is \emph{non-degenerate} at period $kt_0$ when $\ker(d\phi_{kt_0}(z_0)-I)=X(z)$; namely, when $\sigma^k\neq1$. The contact manifold $(N,\lambda)$ is said to be non-degenerate (or, employing a Riemannian terminology, \emph{bumpy}), when its closed Reeb orbits are non-degenerate for all possible periods, that is, no Floquet multiplier is a root of unity.

The closed Reeb orbit $\gamma$ is 
\begin{itemize}
\item \emph{elliptic} when $\sigma,\sigma^{-1}\in S^1$,
\item \emph{positively hyperbolic} when $\sigma,\sigma^{-1}\in(0,1)\cup(1,\infty)$,
\item \emph{negatively hyperbolic} when $\sigma,\sigma^{-1}\in(-\infty,-1)\cup(-1,0)$.
\end{itemize}
When $\gamma$ is hyperbolic, the Floquet multiplier $\sigma$ with absolute value $|\sigma|<1$ is called the stable Floquet multiplier. The stable and unstable distributions along $\gamma$ are defined respectively as
\begin{align*}
 \Es(\gamma(t)) = \ker\big(d\phi_{t_0}(\gamma(t))-\sigma I\big),
 \qquad
 \Eu(\gamma(t)) = \ker\big(d\phi_{t_0}(\gamma(t))-\sigma^{-1} I\big).
\end{align*}
The stable and unstable manifolds of $\gamma$ are defined respectively as
\begin{align*}
 \Ws(\gamma)=\bigcup_{t\in\R/t_0\Z} \Ws(\gamma(t)),
 \qquad
 \Wu(\gamma)=\bigcup_{t\in\R/t_0\Z} \Wu(\gamma(t)),
\end{align*}
where 
\begin{align*}
 \Ws(\gamma(t)) & =\Big\{ z\in N\ \Big|\ \lim_{r\to\infty}d(\phi_r(z),\gamma(t+r))=0\Big\},
 \\
 \Wu(\gamma(t)) & =\Big\{ z\in N\ \Big|\ \lim_{r\to-\infty}d(\phi_r(z),\gamma(t+r))=0\Big\}.
\end{align*}
Here, $d:N\times N\to[0,\infty)$ denotes any Riemannian distance. The spaces $\Ws(\gamma(t))$ and $\Wu(\gamma(t))$ are smooth injectively immersed 1-dimensional submanifolds of $N$, transverse to the Reeb vector field $X$, and with tangent spaces at $\gamma(t)$ given by
\begin{align*}
 T_{\gamma(t)}\Ws(\gamma(t))=\Es(\gamma(t)),
 \qquad
 T_{\gamma(t)}\Wu(\gamma(t))=\Eu(\gamma(t)).
\end{align*}
Since $\phi_r(\Ws(\gamma(t)))=\Ws(\gamma(t+r))$ and $\phi_r(\Wu(\gamma(t)))=\Wu(\gamma(t+r))$, the stable and unstable manifolds $\Ws(\gamma)$ and $\Wu(\gamma)$ are smooth injectively immersed 2-dimensional submanifolds invariant under the Reeb vector field.

A closed contact 3-manifold $(N,\lambda)$ is said to satisfy the \emph{Kupka-Smale condition} when it is non-degenerate, and satisfies the transversality $\Ws(\gamma_1)\pitchfork\Wu(\gamma_2)$ for each pair of (not necessarily distinct) closed Reeb orbits $\gamma_1,\gamma_2$.

\subsection{Broken book decompositions}

A \emph{surface of section} for the Reeb flow of the closed contact 3-manifold $(N,\lambda)$ is an immersed compact surface with boundary  $\Sigma\looparrowright N$ whose interior $\interior(\Sigma)$ is embedded and transverse to the Reeb vector field $X$, and whose boundary $\partial\Sigma$ is tangent to $X$. A surface of section $\Sigma$ is called a \emph{Birkhoff section} when there exists $T>0$ such that, for each $z\in N$, we have $\phi_{t}(z)\in\Sigma$ for some $t\in[0,T]$.

Motivated by the quest of Birkhoff sections, and inspired by Hofer, Wysocki, and Zehnder's finite energy foliations~\cite{Hofer:1998vy},
Colin, Dehornoy, and Rechtman \cite{Colin:2020tl} introduced the notion of \emph{broken book decomposition} of a non-degenerate closed contact 3-manifold $(N,\lambda)$, which consists of the following data:
\begin{itemize}

\item A \emph{binding} $K=\Krad\cup\Kbr$, which is the disjoint union of the \emph{radial binding} $\Krad\subset N$ consisting of  a finite collection of closed Reeb orbits, and of the  \emph{broken binding} $\Kbr\subset N$ consisting of a finite collection of hyperbolic closed Reeb orbits.

\item A family $\F$ of compact surfaces of section, called the \emph{pages}, whose interiors foliate $N\setminus K$ and whose union of boundaries is precisely
\begin{align*}
 \bigcup_{\Sigma\in\F}\partial\Sigma=K.
\end{align*}

\item Finitely many \emph{rigid pages} $\Sigma_1,...,\Sigma_n\in\F$.

\end{itemize}
This data is required to satisfy the following properties:
\begin{itemize}
\item (\emph{Radial binding}) Close to a small segment of a radial binding component $\gamma\subset\Krad$, the pages arrive radially as in Figure~\ref{f:radial}(a). For any page $\Sigma$ whose boundary contains $\gamma$, there exists $T>0$ such that, for each $z\in\Sigma$ sufficiently close to $\gamma$, we have $\phi_t(z)\in\Sigma$ for some $t\in(0,T]$.

\item (\emph{Broken binding}) Close to a small segment of a broken binding component $\gamma'\subset\Kbr$, the pages arrive radially in four sectors, and hyperbolically in the four sectors in between, as in Figure~\ref{f:radial}(b). The pages in the four hyperbolic sectors are precisely those that intersect $\Ws(\gamma')\cup\Wu(\gamma')$

\begin{figure}
\begin{footnotesize}
\begingroup%
  \makeatletter%
  \providecommand\color[2][]{%
    \errmessage{(Inkscape) Color is used for the text in Inkscape, but the package 'color.sty' is not loaded}%
    \renewcommand\color[2][]{}%
  }%
  \providecommand\transparent[1]{%
    \errmessage{(Inkscape) Transparency is used (non-zero) for the text in Inkscape, but the package 'transparent.sty' is not loaded}%
    \renewcommand\transparent[1]{}%
  }%
  \providecommand\rotatebox[2]{#2}%
  \newcommand*\fsize{\dimexpr\f@size pt\relax}%
  \newcommand*\lineheight[1]{\fontsize{\fsize}{#1\fsize}\selectfont}%
  \ifx\svgwidth\undefined%
    \setlength{\unitlength}{345.13683734bp}%
    \ifx\svgscale\undefined%
      \relax%
    \else%
      \setlength{\unitlength}{\unitlength * \real{\svgscale}}%
    \fi%
  \else%
    \setlength{\unitlength}{\svgwidth}%
  \fi%
  \global\let\svgwidth\undefined%
  \global\let\svgscale\undefined%
  \makeatother%
  \begin{picture}(1,0.17746247)%
    \lineheight{1}%
    \setlength\tabcolsep{0pt}%
    \put(0.38856712,0.12332316){\color[rgb]{0,0,0}\makebox(0,0)[lt]{\lineheight{1.25}\smash{\begin{tabular}[t]{l}$\gamma$\end{tabular}}}}%
    \put(0,0){\includegraphics[width=\unitlength,page=1]{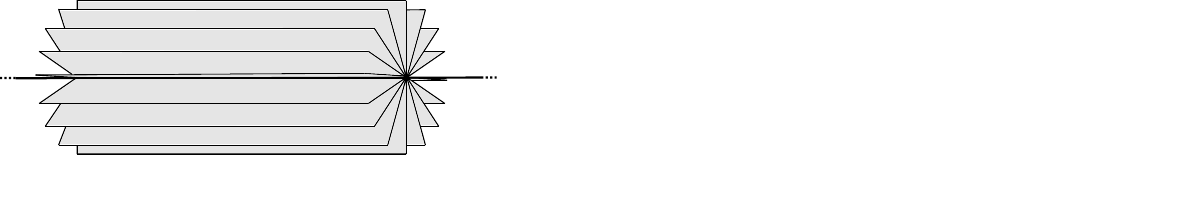}}%
    \put(0.95595048,0.12319808){\color[rgb]{0,0,0}\makebox(0,0)[lt]{\lineheight{1.25}\smash{\begin{tabular}[t]{l}$\gamma$\end{tabular}}}}%
    \put(0,0){\includegraphics[width=\unitlength,page=2]{z_radial.pdf}}%
    \put(0.17126205,0.00163205){\color[rgb]{0,0,0}\makebox(0,0)[lt]{\lineheight{1.25}\smash{\begin{tabular}[t]{l}$\textbf{(a)}$\end{tabular}}}}%
    \put(0.77971608,0.00163205){\color[rgb]{0,0,0}\makebox(0,0)[lt]{\lineheight{1.25}\smash{\begin{tabular}[t]{l}$\textbf{(b)}$\end{tabular}}}}%
  \end{picture}%
\endgroup%

\end{footnotesize}
\caption{\textbf{(a)} A radial binding component $\gamma$. \textbf{(b)} A broken binding component $\gamma'$.}
\label{f:radial}
\end{figure}

\item (\emph{Rigid pages}) Every Reeb orbit intersects at least once the collection of rigid pages, i.e.\ for each $z\in N$ there exists $t\in\R$ such that $\phi_t(z)\in\Sigma_1\cup...\cup\Sigma_n$. If $\phi_t(z)\not\in\Sigma_1\cup...\cup\Sigma_n$ for all $t>0$, then $z\in\Ws(\gamma)$ for some broken binding component $\gamma\subset\Kbr$. Analogously, if $\phi_t(z)\not\in\Sigma_1\cup...\cup\Sigma_n$ for all $t<0$, then $z\in\Wu(\gamma')$ for some broken binding component $\gamma'\subset\Kbr$. 

\end{itemize}
According to a theorem of Colin, Dehornoy, and Rechtman, any non-degenerate closed contact manifold admits a broken book decomposition. We refer the reader to \cite{Colin:2020tl} for a proof of this fact, as well as for more details concerning broken book decompositions.

When the broken binding $\Kbr$ is empty, the broken book decomposition reduces to an ordinary rational open book decomposition. In this case, any page $\Sigma$ is a Birkhoff section.

\subsection{Heteroclinics and homoclinics}
Let $\gamma_1,\gamma_2$ be two hyperbolic closed Reeb orbits. We recall that a \emph{heteroclinic} $\gamma$ from $\gamma_1$ to $\gamma_2$ is a Reeb orbit contained in $\Wu(\gamma_1)\cap\Ws(\gamma_2)\setminus(\gamma_1\cup\gamma_2)$. When the intersection $\Wu(\gamma_1)\cap\Ws(\gamma_2)$ is transverse at some point (and therefore at all points) of $\gamma$, we say that $\gamma$ is a \emph{transverse} heteroclinic. Notice that, under the Kupka-Smale assumption, every heteroclinic is automatically a transverse one. A \emph{homoclinic} is a heteroclinic from a closed Reeb orbit to itself.

The following lemma was originally proved by Hofer, Wysocki, and Zehnder \cite[Prop.~7.5]{Hofer:2003wf} in the context of finite energy foliations, and reproved in the general setting of broken book decompositions by Colin, Dehornoy, and Rechtman \cite[Lemma~4.1]{Colin:2020tl}.

\begin{Lemma}
\label{l:heteroclinics}
Let $(N,\lambda)$ be a closed contact 3-man\-i\-fold, equipped with a broken book decomposition with broken binding $\Kbr\neq\varnothing$. Let $\gamma\subset\Kbr$ be a broken binding component.
\begin{itemize}
\item[$(i)$] Every path-connected component $W'\subset\Wu(\gamma)\setminus\gamma$ contains a heteroclinic towards some broken binding orbit $\gamma'\subset\Kbr$, i.e.~$W'\cap\Ws(\gamma')\neq\varnothing$.

\item[$(ii)$] Every path-connected component $W''\subset\Ws(\gamma)\setminus\gamma$ contains a heteroclinic from some broken binding orbit $\gamma''\subset\Kbr$, i.e.~$W''\cap\Wu(\gamma'')\neq\varnothing$.\hfill\qed

\end{itemize}
\end{Lemma}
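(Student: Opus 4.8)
The plan is to reduce part $(ii)$ to part $(i)$ by reversing the Reeb flow, and then to prove $(i)$ by a compactness argument near the broken binding. For the reduction: $-\lambda$ is again a contact form, with Reeb vector field $-X$, and the given broken book decomposition of $(N,\lambda)$ is also one for $(N,-\lambda)$ — the pages and the binding are unchanged, the radial and hyperbolic sectors at each binding component are unchanged, while $\Ws$ and $\Wu$ get interchanged, as do the positive and negative time half-lines in the rigid-pages axiom. Under this correspondence, a path-connected component $W''\subset\Ws(\gamma)\setminus\gamma$ becomes a path-connected component of the unstable manifold of $\gamma$ for the reversed flow, and a heteroclinic from a broken binding orbit $\gamma''$ contained in $W''$ becomes a heteroclinic towards $\gamma''$; hence $(i)$ for the reversed flow is precisely $(ii)$ for $\phi_t$. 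So it suffices to prove $(i)$.

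To prove $(i)$, I would argue by contradiction, assuming that $W'\cap\Ws(\gamma')=\varnothing$ for \emph{every} broken binding component $\gamma'\subset\Kbr$ (in particular for $\gamma$ itself). First I would record the following elementary consequence: every $z\in W'$ has the property that $\phi_t(z)\in\Sigma_1\cup\dots\cup\Sigma_n$ for arbitrarily large $t>0$. Indeed, if the forward orbit of some $\phi_s(z)$ missed all the rigid pages, the rigid-pages axiom would force $\phi_s(z)\in\Ws(\gamma')$, hence $z\in\Ws(\gamma')$, for some broken binding component $\gamma'$, contrary to assumption; applying this for all $s\geq0$ yields an increasing, unbounded sequence of hitting times (unboundedness follows from a flow-box argument, since the finitely many rigid pages are compact with interior transverse to $X$, so the first return time to their union is bounded below by a positive constant).

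The core of the proof is the analysis of the pages in a hyperbolic sector of $\gamma$ adjacent to the separatrix $W'$. By the broken binding axiom these are exactly the pages meeting $\Wu(\gamma)\cup\Ws(\gamma)$, and near $\gamma$ they form a family that degenerates, as its parameter tends to a boundary value, onto the union of the local pieces of $W'$ and of the adjacent stable separatrix of $\gamma$. Fixing one such page $\Sigma$, the intersection $\Sigma\cap W'$ contains an embedded arc one end of which limits onto $\gamma$; since $\Sigma$ is a \emph{compact} surface of section with $\partial\Sigma\subset K$, its other end must leave every neighbourhood of $\gamma$, and following it under the flow together with the degeneration of the family shows that the Hausdorff limit of the family is a compact flow-invariant set containing $\overline{W'}$. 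I would then argue that, because each member of the family is compact, this limit set cannot consist solely of a wandering separatrix: it has to close up by entering the hyperbolic sectors of a further broken binding component $\gamma'\subset\Kbr$, and the orbits realizing this entrance lie in $\Wu(\gamma)\cap\Ws(\gamma')$, i.e.\ in $W'$, which contradicts the standing assumption.

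I expect this last step to be the main obstacle: turning the morally evident statement ``a compact family of pages degenerating onto $\Wu(\gamma)$ must reconnect through the stable manifold of some broken binding'' into a rigorous argument. The robust way to do this is the one of Hofer, Wysocki, and Zehnder, who control the asymptotic behaviour of the pseudoholomorphic curves underlying the pages in the hyperbolic sectors, and whose argument was adapted to the general setting of broken book decompositions by Colin, Dehornoy, and Rechtman; the elementary observation of the second paragraph is exactly what excludes the alternative in which $W'$ ``has nowhere to go''.
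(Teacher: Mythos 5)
Your reduction of $(ii)$ to $(i)$ by passing to $-\lambda$ is correct, and your preliminary observation (under the contradiction hypothesis, every point of $W'$ meets the rigid pages at arbitrarily large forward times, by the rigid-pages axiom) is sound. But the third paragraph, which is where the lemma actually lives, is not a proof: the assertions that the pages of the hyperbolic sector adjacent to $W'$ degenerate onto the local separatrices, that their ``Hausdorff limit is a compact flow-invariant set containing $\overline{W'}$'', and above all that this limit ``cannot consist solely of a wandering separatrix'' and ``has to close up by entering the hyperbolic sectors of a further broken binding component $\gamma'$'' are precisely the content of the statement being proved, and you assert them rather than establish them. Pages are not flow-invariant, their limits need not be, and $\overline{W'}$ may be a large (even dense) invariant set, so none of these claims is automatic; the step you single out as ``the main obstacle'' is the whole lemma, and you resolve it only by deferring to Hofer--Wysocki--Zehnder and Colin--Dehornoy--Rechtman. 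So the proposal has a genuine gap at its core.

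For context: the paper itself does not prove this lemma either; it is quoted (hence the \qed in the statement) from \cite[Prop.~7.5]{Hofer:2003wf}, originally in the setting of finite energy foliations, and from \cite[Lemma~4.1]{Colin:2020tl} in the general broken book setting. Those arguments exploit the precise structure of the broken book near the broken binding (the four radial and four hyperbolic sectors, the compactness of pages with boundary on $K$, and the behaviour of the foliation of $N\setminus K$ by page interiors) to show that a separatrix component $W'\subset\Wu(\gamma)\setminus\gamma$ must meet $\Ws(\gamma')$ for some $\gamma'\subset\Kbr$. Your sketch gestures at this mechanism but does not supply the analysis; as written it is an outline of why the statement is plausible, plus a citation to the same sources the paper relies on, rather than an independent proof.
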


\section{Construction of a Birkhoff section}
\label{s:construction}

\subsection{Fried's surgery}
Let $(N,\lambda)$ be a closed contact 3-manifold with Reeb vector field $X$. In this paper, by \emph{immersed surface of section} for the Reeb vector field we mean an immersed compact surface
with boundary  $\Sigma\looparrowright N$ whose interior $\interior(\Sigma)$ is transverse to $X$ and whose boundary $\partial\Sigma$ is tangent to $X$.

The following lemma goes along the line of the arguments in Colin, Dehornoy, and Rechtman's \cite{Colin:2020tl} for the construction of a broken book decomposition, which in turn were based on a surgery technique due to Fried \cite{Fried:1983uj}.

\begin{Lemma}
\label{l:surgery}
Let $(N,\lambda)$ be a non-degenerate closed contact 3-manifold, equipped with a broken book decomposition with binding $K=\Krad\cup\Kbr$. Assume that there exists a broken binding component $\gamma\subset\Kbr$, and an immersed surface of section $\Sigma\looparrowright N$ whose interior $\interior(\Sigma)$ intersects $\gamma$, and whose boundary $\partial\Sigma$ is disjoint from the binding $K$. Then, there exists a broken book decomposition with broken binding 
 $\Kbr\setminus\gamma$.
\end{Lemma}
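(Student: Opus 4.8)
The plan is to remove the broken binding orbit $\gamma$ from the binding by using the surface $\Sigma$ to ``complete'' the pages that previously limited onto $\gamma$, turning $\gamma$ into an ordinary interior orbit of the new pages. The geometry near $\gamma$ is governed by Figure~\ref{f:radial}(b): four \emph{radial} sectors in which the old pages arrive radially, and four \emph{hyperbolic} sectors, filled by the pages meeting $\Ws(\gamma)\cup\Wu(\gamma)$, in which the pages spiral along the separatrices. The effect of the surgery should be to glue a copy of (a piece of) $\Sigma$ across $\gamma$ so that, locally, the radial pages get connected to each other past $\gamma$, and the spiralling hyperbolic pages get ``cut off'' and capped by $\Sigma$.

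First I would set up the local model near $\gamma$. Since $\interior(\Sigma)$ is transverse to $X$ and meets $\gamma$, near an intersection point $p\in\interior(\Sigma)\cap\gamma$ the surface $\Sigma$ looks like a small disk transverse to the flow; the Poincar\'e return map of the flow along $\gamma$ acts on this disk as (a representative of) the linearized hyperbolic return map of $\gamma$, whose stable and unstable eigendirections are tangent to $\Ws(\gamma)$ and $\Wu(\gamma)$. One must be slightly careful because $\Sigma$ is only immersed, so $\interior(\Sigma)\cap\gamma$ may be several points, and a priori $\Sigma$ could have several local sheets through $p$; I would first reduce to a single clean local sheet by working with the connected component of $\interior(\Sigma)\cap(\text{tubular neighborhood of }\gamma)$ through $p$, and handle the finitely many intersection points of $\Sigma$ with $\gamma$ one at a time. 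Using a small flow box $U\cong D^2\times(\R/T_0\Z)$ around $\gamma$ with $\gamma=\{0\}\times(\R/T_0\Z)$, both $\Sigma\cap U$ and the old pages $\Sigma'\cap U$ become families of curves in $D^2$ (the $\{0\}$-section being $\gamma$), and the whole construction becomes a two-dimensional cut-and-paste on $D^2$, equivariant with respect to the return map.

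Next I would perform the actual surgery. Take a small annular neighborhood $A\subset\Sigma$ of $\gamma\cap\interior(\Sigma)$ (an annulus around each intersection point, a neighborhood of $\gamma$ inside $\Sigma$ once we thicken $\gamma\subset\interior(\Sigma)$ along the flow), remove $\gamma$ and $A$ from the union of old pages, and reglue: a page arriving radially from one side of $\gamma$ is connected, through a thin collar pushed along $\Sigma$, to the page arriving radially from the opposite side, while a page spiralling in a hyperbolic sector is truncated where it first hits $\partial A$ and its new boundary is pushed to lie on $\Sigma$ — since $\partial\Sigma\cap K=\varnothing$, these new page-boundaries are genuine interior pieces and do not create new binding. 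The pages outside $U$ are unchanged away from a collar. Concretely one defines the new page family $\F'$ by modifying each $\Sigma'\in\F$ only inside $U$ according to the model on $D^2$, and checks that their interiors still foliate $N\setminus(K\setminus\gamma)$: away from $U$ nothing changed, and inside $U$ the modified leaves through $D^2\setminus\{0\}$ together with $\gamma$ itself now foliate $D^2$, with $\gamma$ becoming an interior leaf-point. The rigid pages are kept (possibly enlarging a few by the capping piece), and one verifies the three axioms: the radial/broken local models at the remaining binding components of $\Krad\cup(\Kbr\setminus\gamma)$ are untouched; the return-time bound for radial components is inherited; and the rigid-page covering property for $N$ still holds because every orbit that used to be captured via $\gamma$ (i.e.\ every point of $\Ws(\gamma)\cup\Wu(\gamma)$) now meets the capping piece of $\Sigma$ in finite forward/backward time, using that $\Sigma$ is an \emph{immersed surface of section} so that its interior is genuinely transverse to $X$.

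The main obstacle I expect is the \emph{global} consistency of the regluing: doing the cut-and-paste $X$-equivariantly in the flow box $U$ while matching smoothly with the unmodified pages outside $U$, and — most delicately — making sure that the pages in the four hyperbolic sectors, which accumulate onto $\Ws(\gamma)\cup\Wu(\gamma)$ and hence onto \emph{other} binding orbits (by Lemma~\ref{l:heteroclinics}), are truncated in a way compatible with what happens near those other broken binding components. This is exactly the point where Fried's surgery in \cite{Fried:1983uj} and the broken-book construction of \cite{Colin:2020tl} do the real work, and I would follow their template closely, the new input here being merely that the surface effecting the surgery is allowed to be immersed rather than embedded, which forces the local, one-intersection-point-at-a-time bookkeeping described above. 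Checking that the truncated hyperbolic pages, re-capped by $\Sigma$, still have boundary contained in $K\setminus\gamma$ — i.e.\ that we have not accidentally left dangling boundary — is the other point requiring care, and it is where the hypothesis $\partial\Sigma\cap K=\varnothing$ is used.
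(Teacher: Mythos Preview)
Your approach has a genuine gap: the local surgery you describe cannot work. You propose to modify the page foliation $\F$ only inside a flow-box neighborhood $U$ of $\gamma$, using the single surface $\Sigma$ to connect or cap the pages so that the result foliates $N\setminus(K\setminus\gamma)$. But $\F$ is a \emph{continuous} family of pages; on a transverse disk near $\gamma$ their traces form the singular pattern of Figure~\ref{f:radial}(b), and every neighborhood of the center is crossed by infinitely many hyperbolic leaves accumulating on the separatrices. Each such leaf would need its own cap, while $\Sigma$ provides only one local sheet. Your suggestion to ``push the truncated boundary onto $\Sigma$'' either leaves boundary that is not tangent to $X$ (so the result is not a surface of section) or forces distinct pages to share a piece of $\Sigma$ and hence to intersect. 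The obstacle you flag at the end is not a technicality to be settled by ``following the template'' of Fried and \cite{Colin:2020tl}: neither reference modifies an existing page foliation, so there is no template for what you are attempting.

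The paper's proof is global and never touches the foliation $\F$. It takes only the finitely many rigid pages $\Sigma_1,\dots,\Sigma_n$, adds $\Sigma$ to this collection, and resolves all self- and mutual intersections by Fried's surgery (Figure~\ref{f:surgery}) to obtain pairwise disjoint surfaces of section $\Upsilon_1,\dots,\Upsilon_m$. One then checks the radial return property for the relevant boundary orbits: for $\gamma$ this holds precisely because $\interior(\Sigma)$ crosses it, and for the orbits in $\partial\Sigma$ the hypothesis $\partial\Sigma\cap K=\varnothing$ guarantees they already cross the old rigid pages. Finally the construction of \cite[Proof of Theorem~1.1]{Colin:2020tl} is applied to the family $\Upsilon_1,\dots,\Upsilon_m$ to build an entirely new broken book, whose broken binding is $\Kbr\setminus\gamma$.
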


\begin{proof}
We denote by $\Sigma_1,...,\Sigma_n\subset N$ the rigid pages of the broken book.
We perturb the interior of the surface of section $\Sigma$ while keeping its boundary fixed, in such a way to obtain a new surface of section $\Sigma'$ that is $C^1$-close to $\Sigma$, has the same the boundary $\partial\Sigma'=\partial\Sigma$, and has self-intersections and intersections with the rigid pages $\Sigma_1,...,\Sigma_n$ in general position. Since $\Sigma'$ and $\Sigma$ are $C^1$-close, $\interior(\Sigma')$ still intersects transversely the broken binding component $\gamma$. We denote by $P\subset\Sigma'$ the subset consisting of the points of self-intersections of $\Sigma'$, and apply a surgery technique due to Fried  \cite[Section~2]{Fried:1983uj} in order to resolve such self-intersections: we resolve the lines of double points in $P$ as in Figure~\ref{f:surgery}(a), the isolated triple intersections in $P$ as in Figure~\ref{f:surgery}(b), and the lines of double points in $P$ with one strand ending at the boundary of $\Sigma'$ as in Figure~\ref{f:surgery}(c); it remains to consider the case 
of lines of double points in $P$ in which both strands end in the same boundary component $\zeta\in\Sigma'$: as it was pointed out in \cite[proof or Corollary~3.2]{Colin:2020tl}, once we resolved the 
double points outside a small tubular neighborhood $W$ of $\zeta$, depending on the trace of the obtained surface of section on $\partial W$, we extend it within $W$ by attaching a suitable finite union of annuli with boundary on $\zeta$ or a suitable finite union of meridional disks (and, in this case, $\zeta$ will not be a boundary component of the obtained surface of section anymore).
We denote by $\Sigma''$ the obtained surface of section, whose interior $\interior(\Sigma'')$ is embedded in $N$ and still intersects $\gamma$. We perturb $\interior(\Sigma'')$ so that the obtained surface of section $\Sigma'''$ intersects $\Sigma_1\cup...\cup\Sigma_n$ in general position, and $\interior(\Sigma''')$ still intersects $\gamma$. Next, we resolve the intersections   $Q:=\Sigma'''\cap(\Sigma_1\cup...\cup\Sigma_n)$ as in Figure~\ref{f:surgery}(a) and 
 Figure~\ref{f:surgery}(c), and with the above mentioned procedure to deal with the lines of double points in which both strands end in the same boundary component. 
 This procedure replaces $\Sigma''',\Sigma_1,...,\Sigma_n$ 
 with another finite collection of surfaces of section $\Upsilon_1,...,\Upsilon_{m}$ 
whose interiors $\interior(\Upsilon_i)$ are 
embedded in $N$, and $\interior(\Upsilon_i)\cap\interior(\Upsilon_j)=\varnothing$ for $i\neq j$. The surgery only modifies the surfaces of section $\Sigma''',\Sigma_1,...,\Sigma_n$ within an arbitrarily small neighborhood $U\subset N$ of $Q$, i.e.
\begin{align*}
(\Upsilon_1\cup...\cup\Upsilon_{m})\setminus U=(\Sigma'''\cup\Sigma_1\cup...\cup\Sigma_n)\setminus U.
\end{align*}
Therefore, there exist constants $t_2>t_1>0$ such that, for each $z\in N$, if the orbit segment $\phi_{[-t_1,t_1]}(z)$ intersects $\Sigma'''\cup\Sigma_1\cup...\cup\Sigma_n$, then the larger orbit segment $\phi_{[-t_2,t_2]}(z)$ intersects $\Upsilon_1\cup...\cup\Upsilon_m$. We refer to this property as to the \emph{intersection property}.

\begin{figure}
\begin{footnotesize}
\begingroup%
  \makeatletter%
  \providecommand\color[2][]{%
    \errmessage{(Inkscape) Color is used for the text in Inkscape, but the package 'color.sty' is not loaded}%
    \renewcommand\color[2][]{}%
  }%
  \providecommand\transparent[1]{%
    \errmessage{(Inkscape) Transparency is used (non-zero) for the text in Inkscape, but the package 'transparent.sty' is not loaded}%
    \renewcommand\transparent[1]{}%
  }%
  \providecommand\rotatebox[2]{#2}%
  \newcommand*\fsize{\dimexpr\f@size pt\relax}%
  \newcommand*\lineheight[1]{\fontsize{\fsize}{#1\fsize}\selectfont}%
  \ifx\svgwidth\undefined%
    \setlength{\unitlength}{341.66565774bp}%
    \ifx\svgscale\undefined%
      \relax%
    \else%
      \setlength{\unitlength}{\unitlength * \real{\svgscale}}%
    \fi%
  \else%
    \setlength{\unitlength}{\svgwidth}%
  \fi%
  \global\let\svgwidth\undefined%
  \global\let\svgscale\undefined%
  \makeatother%
  \begin{picture}(1,1.02751658)%
    \lineheight{1}%
    \setlength\tabcolsep{0pt}%
    \put(0,0){\includegraphics[width=\unitlength,page=1]{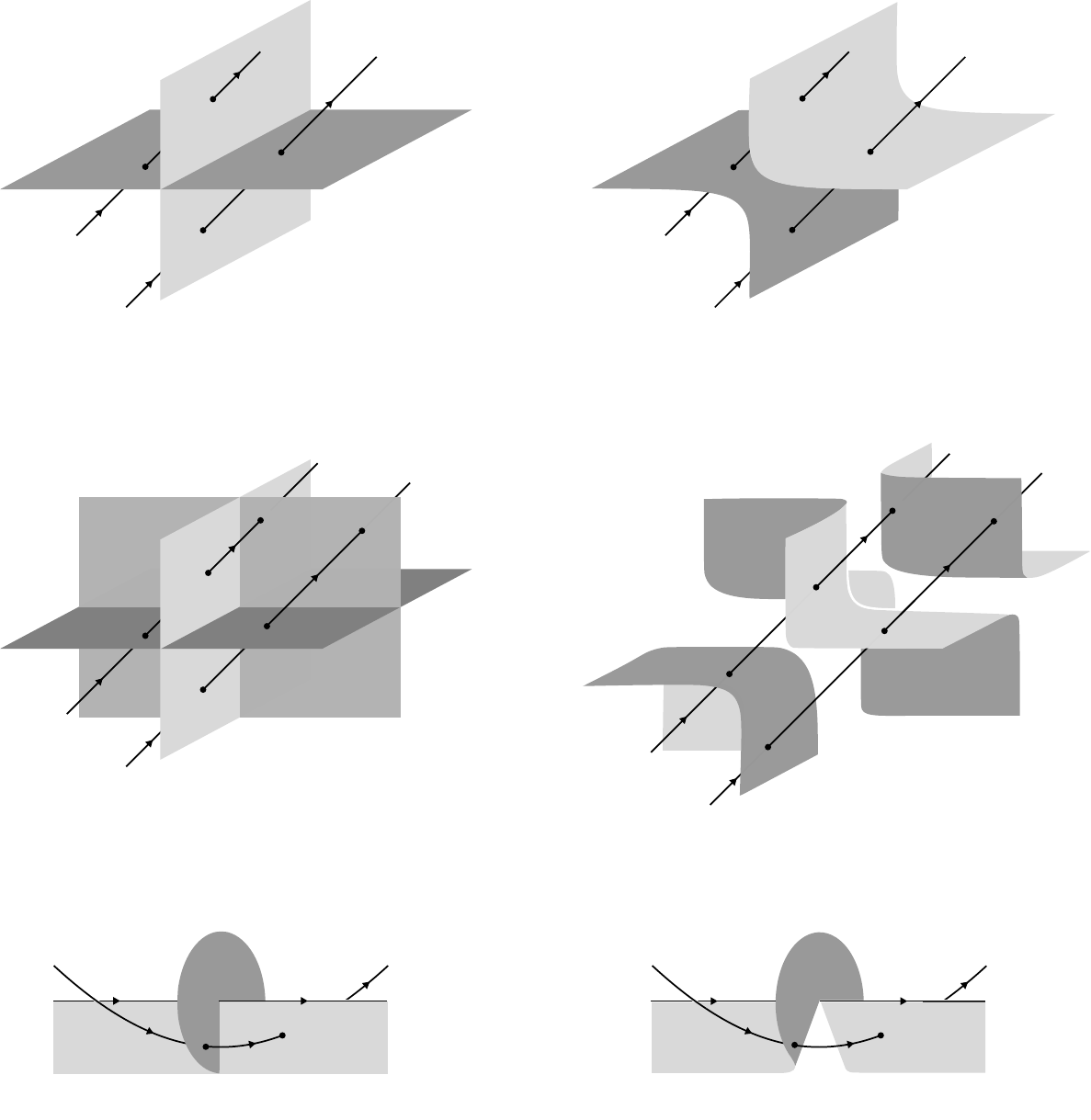}}%
    \put(0.46310342,0.86520082){\color[rgb]{0,0,0}\makebox(0,0)[lt]{\lineheight{1.25}\smash{\begin{tabular}[t]{l}{\large$\Rightarrow$}\end{tabular}}}}%
    \put(0.46310342,0.4700778){\color[rgb]{0,0,0}\makebox(0,0)[lt]{\lineheight{1.25}\smash{\begin{tabular}[t]{l}{\large$\Rightarrow$}\end{tabular}}}}%
    \put(0.46310342,0.07495455){\color[rgb]{0,0,0}\makebox(0,0)[lt]{\lineheight{1.25}\smash{\begin{tabular}[t]{l}{\large$\Rightarrow$}\end{tabular}}}}%
    \put(0.46310342,0.7203223){\color[rgb]{0,0,0}\makebox(0,0)[lt]{\lineheight{1.25}\smash{\begin{tabular}[t]{l}\textbf{(a)}\end{tabular}}}}%
    \put(0.46310342,0.25934565){\color[rgb]{0,0,0}\makebox(0,0)[lt]{\lineheight{1.25}\smash{\begin{tabular}[t]{l}\textbf{(b)}\end{tabular}}}}%
    \put(0.46310342,0.00471038){\color[rgb]{0,0,0}\makebox(0,0)[lt]{\lineheight{1.25}\smash{\begin{tabular}[t]{l}\textbf{(c)}\end{tabular}}}}%
  \end{picture}%
\endgroup%

\end{footnotesize}
\caption{Fried's surgeries, as described in \cite{Fried:1983uj}. 
Observe that the choice of the resolutions, being transversal to the flow, are unique. }
\label{f:surgery}
\end{figure}

We say that a component $\zeta\subset\partial \Upsilon_1\cup...\cup\partial \Upsilon_m$ 
has the {\it radial property} if there exists $T>0$
such that, for every $z\in N$ sufficiently close to $\zeta$, the segment
$\phi_{(0,T]}(z)$ intersects $\Upsilon_1\cup...\cup\Upsilon_m$.
Since every Reeb orbit $t\mapsto\phi_t(z)$ intersects $\Sigma_1\cup...\cup\Sigma_n$, 
the intersection property implies that it intersects $\Upsilon_1\cup...\cup\Upsilon_m$ as well. 
Since $\interior(\Sigma''')$ intersects $\gamma$ transversely, the forward orbit of  any $z\in N$
sufficiently close to $\gamma$ intersects $\Sigma'''$ in a time close to the period of $\gamma$. This, together with the intersection property, implies the radial property for $\gamma$.

Consider a component 
$\zeta\subset \partial\Sigma'''\cap(\partial \Upsilon_1\cup...\cup\partial \Upsilon_m)$.
Since $\zeta\cap K=\varnothing$, we have that $\zeta$ intersects transversely 
$\Sigma_1\cup...\cup \Sigma_n$. The forward orbit of any $z\in N$ sufficiently 
close to $\zeta$ intersects $\Sigma_1\cup...\cup \Sigma_n$ in a time close to the period
of $\zeta$. This, together with the intersection property,  implies the radial property for $\zeta$.
Finally, the intersection property alone implies that each radial binding component $\zeta'\subset\Krad\cap (\partial \Upsilon_1\cup...\cup\partial \Upsilon_m)$ satisfies the 
radial property.

Now, out of the finite family of surfaces of section $\Upsilon_1,...,\Upsilon_n$, the argument in Colin, Dehornoy and Rechtman's \cite[Proof of Theorem~1.1]{Colin:2020tl} provide a broken book decomposition of $(N,\lambda)$ with binding $\partial \Upsilon_1\cup...\cup\partial \Upsilon_m\subseteq K\cup\partial\Sigma'''$. The conclusions of the last two paragraphs imply that the radial binding of the new broken book decomposition is contained in $\Krad\cup\partial\Sigma'''\cup\gamma$, and  the broken binding of the new broken book decomposition is $\Kbr\setminus\gamma$.
\end{proof}

\subsection{Transverse homoclinics in all separatrices}
\label{ss:transverse_homoclinics_in_all_separatrices}
Let $(N,\lambda)$ be a closed contact 3-manifold with Reeb vector field $X$ and Reeb flow $\phi_t:N\to N$. Assume that there exists a hyperbolic closed Reeb orbit $\gamma$. We fix a point $z_0\in\gamma$, and an embedded open disk $D\subset N$ transverse to $X$ and containing the point $z_0$. The intersections $\Wu(\gamma)\cap D$ and $\Ws(\gamma)\cap D$ are transverse, and we denote by $\lu\subset\Wu(\gamma)\cap D$ and $\ls\subset\Ws(\gamma)\cap D$ the 
path-connected components containing $z_0$. Up to shrinking $D$ around $z_0$, both $\lu$ and $\ls$ are embedded 1-dimensional manifolds 
 intersecting only at $z_0$,
 and both separating $D$ into two path-connected components.

We write the complements $\lu\setminus\{z_0\}$ and $\ls\setminus\{z_0\}$ 
as union of path-connected components
\begin{align*}
 \lu\setminus\{z_0\}=\lu_1\cup\lu_2,
 \qquad
 \ls\setminus\{z_0\}=\ls_1\cup\ls_2.
\end{align*}
The open intervals $\lu_i$ and $\ls_i$ are called unstable separatrices and stable separatrices respectively. We say that $\gamma$ has \emph{transverse homoclinics in both unstable separatrices} when there are points of transverse  intersection $z\in\lu_1\cap\Ws(\gamma)$ and $z'\in\lu_2\cap\Ws(\gamma)$. 
Analogously, we say that $\gamma$ has \emph{transverse homoclinics in both stable separatrices} when there are points of transverse  intersection  $z\in\ls_1\cap\Wu(\gamma)$ and $z'\in\ls_2\cap\Wu(\gamma)$.

In Section~\ref{s:geodesic_flows}, we shall need the following consequence of Lemma~\ref{l:heteroclinics}, originally proved by Colin, Dehornoy, and Rechtman \cite[Lemma~4.2]{Colin:2020tl}. The statement requires the considered closed contact 3-manifold to satisfy the Kupka-Smale condition (see Section~\ref{ss:KS}).

\begin{Lemma}[Colin-Dehornoy-Rechtman] 
\label{l:two_homoclinics}
Let $(N,\lambda)$ be a closed contact 3-man\-i\-fold satisfying the Kupka-Smale condition, equipped with a broken book decomposition with non-empty broken binding $\Kbr\neq\varnothing$. Then, there exists a broken binding component $\gamma\subset\Kbr$ with transverse homoclinics in both stable separatrices. Analogously, there exists a broken binding component $\gamma'\subset\Kbr$ with transverse homoclinics in both unstable separatrices.
\hfill\qed
\end{Lemma}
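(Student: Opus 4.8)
The plan is to reduce the statement to a combinatorial fact about the finite set $\Kbr$ of broken binding components, fed by the inclination lemma. For broken binding orbits $\gamma_1,\gamma_2$, write $\gamma_1\rightsquigarrow\gamma_2$ when $\Wu(\gamma_1)\cap\Ws(\gamma_2)\neq\varnothing$; under the Kupka-Smale condition any such intersection is automatically transverse. The first point to establish is that $\rightsquigarrow$ is transitive, and more precisely that $\gamma_1\rightsquigarrow\gamma_2$ forces $\Wu(\gamma_1)$ to accumulate, in the $C^1$ topology on compact portions, on all of $\Wu(\gamma_2)$. Indeed, following a heteroclinic point $p\in\Wu(\gamma_1)\pitchfork\Ws(\gamma_2)$ forward into a small disk transverse to $\Kbr$ near $\gamma_2$, the trace of $\Wu(\gamma_1)$ contains an arc crossing the local stable manifold of $\gamma_2$ transversely; since $\Wu(\gamma_1)$ is invariant under the return map, the classical inclination ($\lambda$-)lemma for the hyperbolic fixed point gives $C^1$-accumulation on the local unstable manifold of $\gamma_2$, hence, flowing, on all of $\Wu(\gamma_2)$. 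Combined with the stability of transverse intersections of surfaces in a $3$-manifold under $C^1$-perturbation, this yields both the transitivity of $\rightsquigarrow$ and its mirror version for stable manifolds.

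I next record a reduction: it suffices to produce a broken binding orbit $\gamma$ such that every connected component $V$ of $\Ws(\gamma)\setminus\gamma$ satisfies $V\cap\Wu(\gamma)\neq\varnothing$, i.e. contains a transverse homoclinic orbit. Passing to the local section $D$ at a point $z_0\in\gamma$, a homoclinic orbit contained in such a $V$ accumulates on $\gamma$, hence meets $\ls$ infinitely often near $z_0$. When $\gamma$ is positively hyperbolic, the two components of $\Ws(\gamma)\setminus\gamma$ trace out $\ls_1$ and $\ls_2$ respectively, so a transverse homoclinic in each component yields transverse homoclinic points in both $\ls_1$ and $\ls_2$. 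When $\gamma$ is negatively hyperbolic there is a single component, tracing out $\ls_1\cup\ls_2$, and the return map exchanges $\ls_1$ and $\ls_2$, so a single transverse homoclinic already produces homoclinic points in both. The analogous reduction holds for the unstable separatrices, exchanging $\Wu$ and $\Ws$.

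For the combinatorial core, form the finite directed graph on the vertex set $\Kbr$ with an edge $\gamma_1\to\gamma_2$ whenever $\gamma_1\rightsquigarrow\gamma_2$, and pass to its condensation, a nonempty finite acyclic graph, which therefore possesses a \emph{source} strongly connected component $C$. Fix $\gamma_0\in C$ and let $V$ be any connected component of $\Ws(\gamma_0)\setminus\gamma_0$. By Lemma~\ref{l:heteroclinics}$(ii)$ there is a broken binding orbit $\gamma''$ with $V\cap\Wu(\gamma'')\neq\varnothing$; in particular $\gamma''\rightsquigarrow\gamma_0$, so the source property forces $\gamma''\in C$, and hence $\gamma_0\rightsquigarrow\gamma''$, i.e. $\Wu(\gamma_0)$ crosses $\Ws(\gamma'')$ transversely at some point $q$. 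On the other hand, an inclination-lemma argument near $\gamma''$ (with the roles of stable and unstable manifolds reversed, applied to the piece of $V$ crossing $\Wu(\gamma'')$) shows that $V$ accumulates in $C^1$ on all of $\Ws(\gamma'')$; by stability of transverse intersections, the sheets of $V$ close to $\Ws(\gamma'')$ near $q$ must cross $\Wu(\gamma_0)$ transversely near $q$, producing a transverse homoclinic orbit of $\gamma_0$ lying in $V$. As $V$ was arbitrary, the reduction gives a broken binding orbit with transverse homoclinics in both stable separatrices. The existence of a broken binding orbit with transverse homoclinics in both unstable separatrices follows from the mirror argument, using a \emph{sink} strongly connected component of the same graph and Lemma~\ref{l:heteroclinics}$(i)$ in place of $(ii)$.

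The delicate points, on which I would spend the most care, are: the bookkeeping of separatrices, i.e. checking that the homoclinic orbit manufactured near $q$ genuinely lies in the prescribed component $V$ rather than in its sibling (this uses the flow-invariance of each component of $\Ws(\gamma_0)\setminus\gamma_0$ and a careful choice of the local sheet output by the inclination lemma); the rigorous passage between the Reeb flow on $N$ and the return map on a local transverse disk, so that the inclination lemma for a hyperbolic fixed point really applies to $\Wu$ and $\Ws$ of the closed orbits; and the careful deduction of the $\ls_1,\ls_2$ statement from the statement about global separatrices. The remaining ingredients — finiteness of $\Kbr$, existence of source and sink strongly connected components, and the openness of transversality — are soft.
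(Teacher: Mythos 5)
Your argument is correct, but note that the paper does not actually prove this lemma: it is quoted from Colin--Dehornoy--Rechtman \cite[Lemma~4.2]{Colin:2020tl} and stated with a \textit{qed}, so there is no internal proof to compare against line by line. What you have written is essentially a self-contained reconstruction of the known argument, and it is in the same spirit as the machinery this paper deploys later for the stronger Theorem~\ref{t:homoclinics}: there the authors also encode heteroclinic connections between components of $\Kbr$ combinatorially (the family $\Gamma$ of chains), use the $\lambda$-lemma to make the relation transitive, and seed everything with Lemma~\ref{l:heteroclinics}. The difference is that Theorem~\ref{t:homoclinics} must produce homoclinics in \emph{all} separatrices of \emph{every} broken binding component, which forces the hard area-based input (Lemmas~\ref{l:large_area}--\ref{l:homoclinics_heteroclinics}), whereas your source/sink strongly connected component trick only needs to exhibit \emph{one} orbit with homoclinics in both stable (resp.\ unstable) separatrices, so you get away with the soft ingredients: Lemma~\ref{l:heteroclinics}, the inclination lemma, persistence of transversality, and finiteness of $\Kbr$. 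Your reduction to ``every component of $\Ws(\gamma)\setminus\gamma$ meets $\Wu(\gamma)$'' and the case discussion (positively hyperbolic: two global components tracing $\ls_1$, $\ls_2$; negatively hyperbolic: one component, return map swapping the local separatrices) is the right way to translate the statement about global separatrices into the paper's definition in terms of $\ls_1,\ls_2$.

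Two small points to tighten, neither a gap. First, define $\gamma_1\rightsquigarrow\gamma_2$ by $\Wu(\gamma_1)\cap\Ws(\gamma_2)\setminus(\gamma_1\cup\gamma_2)\neq\varnothing$, as the paper does for heteroclinics; with your literal definition every vertex carries a vacuous self-loop (the orbit itself lies in both manifolds), and the transitivity/λ-lemma step is only valid for edges witnessed by genuine heteroclinic points (for \emph{distinct} closed orbits this is automatic, since neither orbit can lie in the other's stable or unstable manifold). Second, in the source-component step you should separate the case $\gamma''=\gamma_0$: there the edge produced by Lemma~\ref{l:heteroclinics}$(ii)$ is already a homoclinic lying in $V$ and you are done, while the sentence ``hence $\gamma_0\rightsquigarrow\gamma''$'' needs the strong connectivity argument only when $\gamma''\neq\gamma_0$. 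With these adjustments the proof is complete, and the mirror argument with a sink component and Lemma~\ref{l:heteroclinics}$(i)$ indeed handles the unstable separatrices.
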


\begin{figure}
\begin{footnotesize}
\begingroup%
  \makeatletter%
  \providecommand\color[2][]{%
    \errmessage{(Inkscape) Color is used for the text in Inkscape, but the package 'color.sty' is not loaded}%
    \renewcommand\color[2][]{}%
  }%
  \providecommand\transparent[1]{%
    \errmessage{(Inkscape) Transparency is used (non-zero) for the text in Inkscape, but the package 'transparent.sty' is not loaded}%
    \renewcommand\transparent[1]{}%
  }%
  \providecommand\rotatebox[2]{#2}%
  \newcommand*\fsize{\dimexpr\f@size pt\relax}%
  \newcommand*\lineheight[1]{\fontsize{\fsize}{#1\fsize}\selectfont}%
  \ifx\svgwidth\undefined%
    \setlength{\unitlength}{364.30660829bp}%
    \ifx\svgscale\undefined%
      \relax%
    \else%
      \setlength{\unitlength}{\unitlength * \real{\svgscale}}%
    \fi%
  \else%
    \setlength{\unitlength}{\svgwidth}%
  \fi%
  \global\let\svgwidth\undefined%
  \global\let\svgscale\undefined%
  \makeatother%
  \begin{picture}(1,0.42493772)%
    \lineheight{1}%
    \setlength\tabcolsep{0pt}%
    \put(0,0){\includegraphics[width=\unitlength,page=1]{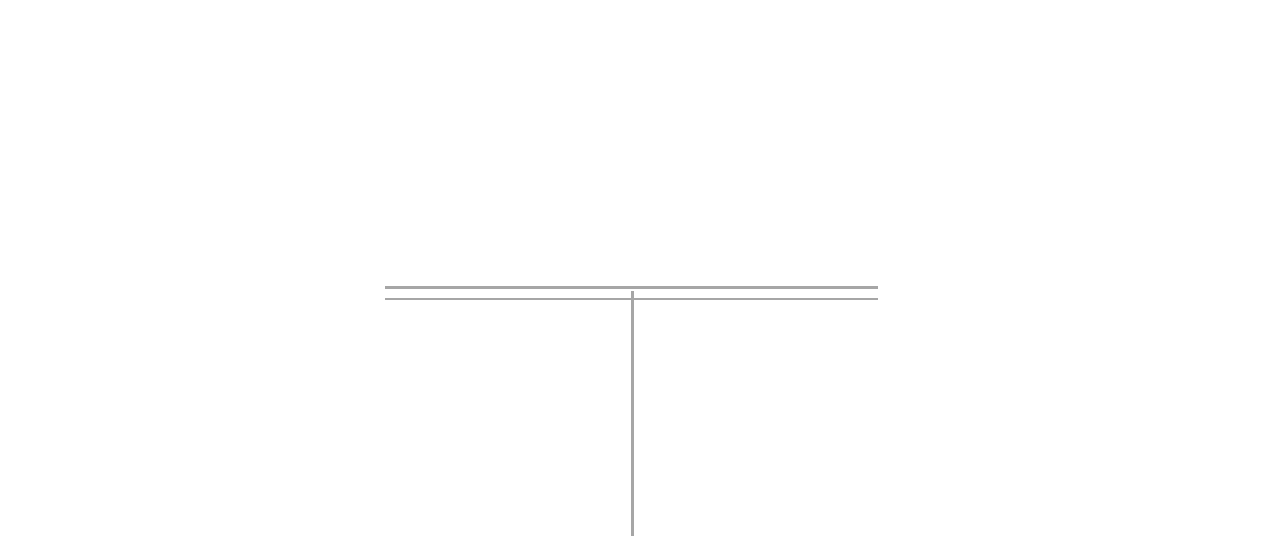}}%
    \put(0.56348354,0.40296961){\color[rgb]{0,0,0}\makebox(0,0)[lt]{\lineheight{1.25}\smash{\begin{tabular}[t]{l}$\gray{\ell_{n_2}'}$\end{tabular}}}}%
    \put(0,0){\includegraphics[width=\unitlength,page=2]{z_homoclinic.pdf}}%
    \put(0.6977991,0.18886415){\color[rgb]{0,0,0}\makebox(0,0)[lt]{\lineheight{1.25}\smash{\begin{tabular}[t]{l}$\lu_i$\end{tabular}}}}%
    \put(0.49192841,0.40296961){\color[rgb]{0,0,0}\makebox(0,0)[lt]{\lineheight{1.25}\smash{\begin{tabular}[t]{l}$\ls_j$\end{tabular}}}}%
    \put(0.50839819,0.20533358){\color[rgb]{0,0,0}\makebox(0,0)[lt]{\lineheight{1.25}\smash{\begin{tabular}[t]{l}$z_0$\end{tabular}}}}%
    \put(0.6977991,0.25886039){\color[rgb]{0,0,0}\makebox(0,0)[lt]{\lineheight{1.25}\smash{\begin{tabular}[t]{l}$\gray{\ell_{n_1}}$\end{tabular}}}}%
    \put(-0.00216057,0.25886039){\color[rgb]{0,0,0}\makebox(0,0)[lt]{\lineheight{1.25}\smash{\begin{tabular}[t]{l}$ $\end{tabular}}}}%
    \put(0,0){\includegraphics[width=\unitlength,page=3]{z_homoclinic.pdf}}%
  \end{picture}%
\endgroup%

\end{footnotesize}
\caption{Transverse homoclinics in all the separatrices}
\label{f:homoclinics}
\end{figure}

Assume now that $\gamma$ has \emph{transverse homoclinics in all the separatrices}, meaning that for each $i,j\in\{1,2\}$ there exist points of transverse intersections $z\in\lu_i\cap\Ws(\gamma)$ and $z'\in\ls_j\cap\Wu(\gamma)$.
The existence of such $z$ and $z'$  implies the existence of sequences 
\[
z_n=\phi_{-t_n}(z)\in\lu_i\cap\Ws(\gamma),
\qquad
z_n'=\phi_{t_n'}(z)\in\ls_j\cap\Wu(\gamma)\] 
such that $t_n\to\infty$ and $t_n'\to\infty$. Notice that $z_n\to z_0$ and $z_n'\to z_0$. We denote by $\ell_n'\subset\Ws(\gamma)\cap D$ the path-connected component containing $z_n$, and by $\ell_n\subset\Wu(\gamma)\cap D$ the path-connected component containing $z_n'$. By the $\lambda$-lemma from hyperbolic dynamics \cite[Prop.~6.2.23]{Katok:1995th}, the sequence $\ell_n$ accumulates on $\lu$, and the sequence $\ell_n'$ accumulates on $\ls$. Therefore, for $n_1$ and $n_2$ large enough, we have a non-empty transverse intersection \[\ell_{n_1}\cap\ell_{n_2}'\neq\varnothing,\]
see Figure~\ref{f:homoclinics}.

Let $b_{\min}=b_{\min}(N,\lambda)\geq0$ be the minimal number of broken binding components of a broken book decomposition of $(N,\lambda)$; the existence of a Birkhoff section is equivalent to $b_{\min}=0$. We say that a broken book decomposition is \emph{minimal} when it has precisely $b_{\min}$ broken binding components. The following statement is implicit in Colin, Dehornoy, and Rechtman's \cite[Section~4]{Colin:2020tl}, and is based on a construction due to Fried \cite{Fried:1983uj}.

\begin{Lemma}
\label{l:no_homoclinics_in_all_separatrices}
Let $(N,\lambda)$ be a non-degenerate closed contact 3-manifold, equipped with a minimal broken book decomposition. Then no component of the broken binding has transverse homoclinics in all the separatrices.
\end{Lemma}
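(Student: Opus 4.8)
The plan is to argue by contradiction, using the hypothetical homoclinics to decrease the number of broken binding components via Lemma~\ref{l:surgery}. Suppose the minimal broken book decomposition of $(N,\lambda)$ has a broken binding component $\gamma\subset\Kbr$ with transverse homoclinics in all the separatrices; in particular $\Kbr\neq\varnothing$ and $b_{\min}\geq1$. With the notation introduced above the statement, fix the transversal disk $D$ at $z_0\in\gamma$ together with the local unstable and stable manifolds $\lu,\ls\subset D$ of the (locally defined) first return map of the Reeb flow. As recalled there, transversality of the homoclinics in all four separatrices gives, via the $\lambda$-lemma, an arc $\ell_{n_1}\subset\Wu(\gamma)\cap D$ accumulating onto $\lu$ and an arc $\ell_{n_2}'\subset\Ws(\gamma)\cap D$ accumulating onto $\ls$, with a non-empty transverse intersection $\ell_{n_1}\cap\ell_{n_2}'\neq\varnothing$ (Figure~\ref{f:homoclinics}). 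From this configuration I would manufacture an immersed surface of section $\Sigma\looparrowright N$ whose interior $\interior(\Sigma)$ meets $\gamma$ and whose boundary $\partial\Sigma$ is disjoint from the binding $K$; granted such $\Sigma$, Lemma~\ref{l:surgery} yields a broken book decomposition with broken binding $\Kbr\setminus\gamma$, hence with one fewer broken binding component than the original, contradicting its minimality.

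To build $\Sigma$, I would follow Fried's surgery \cite[Section~2]{Fried:1983uj} in the guise used by Colin, Dehornoy, and Rechtman \cite[Section~4]{Colin:2020tl}. Near $\gamma$ the flow-invariant surfaces $\Wu(\gamma)$ and $\Ws(\gamma)$ are immersed half-open annuli or M\"obius bands limiting onto $\gamma$; truncating a collar of each along the curves $\ell_{n_1}$ and $\ell_{n_2}'$ produces compact immersed surfaces $A^u\subset\Wu(\gamma)$ and $A^s\subset\Ws(\gamma)$, each having $\gamma$ among its boundary components, whose union near $\gamma$ fills precisely the four hyperbolic sectors of Figure~\ref{f:radial}(b). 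First I would splice $A^u$ and $A^s$ together along the transverse homoclinic orbits running through the points of $\ell_{n_1}\cap\ell_{n_2}'$, and then resolve the resulting flow-tangent configuration near $\gamma$ in the unique way compatible with transversality to $X$, obtaining a surface whose interior is transverse to $X$ and genuinely crosses $\gamma$. Its remaining boundary runs along sub-arcs of $\ell_{n_1}$ and $\ell_{n_2}'$, along homoclinic orbits, and along finitely many orbit segments; exactly as in the proof of Lemma~\ref{l:surgery}, double-point lines whose strands end on a common boundary component are resolved by attaching annuli or meridional disks, and the outer boundary is replaced by periodic orbits shadowing the homoclinic loops furnished by the Birkhoff--Smale horseshoe at the transverse homoclinic. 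Since $K$ is a finite union of closed orbits, choosing $n_1,n_2$ and the shadowing periods large makes every boundary orbit disjoint from $K$, which is the desired $\Sigma$.

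Feeding $\Sigma$ and $\gamma$ into Lemma~\ref{l:surgery} then produces a broken book decomposition of $(N,\lambda)$ with broken binding $\Kbr\setminus\gamma$, contradicting minimality and finishing the argument.

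The hard part will be the construction in the second paragraph, specifically the two transversality-and-boundary claims: that resolving the flow-tangent sheets $A^u\cup A^s$ near $\gamma$ can be arranged so that the interior of the new surface is genuinely transverse to $X$ while still crossing $\gamma$, and that the outer boundary can be closed up using orbits disjoint from $K$. I expect these to require the explicit local models near a broken binding component from \cite{Colin:2020tl} and the surgery pictures of \cite{Fried:1983uj}, rather than a soft argument; everything else is bookkeeping on top of Lemma~\ref{l:surgery}.
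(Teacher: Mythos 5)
Your global strategy (contradict minimality by producing an immersed surface of section whose interior meets $\gamma$ and whose boundary avoids $K$, then quote Lemma~\ref{l:surgery}) is exactly the paper's, and the first paragraph up to and including the transverse intersection $\ell_{n_1}\cap\ell_{n_2}'\neq\varnothing$ is fine. The genuine gap is the second paragraph, i.e.\ the construction of $\Sigma$ itself, and the route you sketch there does not work as stated. The pieces $A^u\subset\Wu(\gamma)$ and $A^s\subset\Ws(\gamma)$ are flow-invariant, hence tangent to the Reeb vector field $X$ at \emph{every} point, not only near $\gamma$; none of the tools you invoke can convert this tangency into transversality. Fried's resolutions (Figure~\ref{f:surgery}) and the annulus/meridian-disk trick in Lemma~\ref{l:surgery} resolve transverse intersections between surfaces that are \emph{already} transverse to the flow in their interiors — they presuppose, rather than produce, a surface of section. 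Likewise, "resolving the flow-tangent configuration near $\gamma$ in the unique way compatible with transversality" is precisely the nontrivial step: the union of the four separatrix sheets is not even a manifold along $\gamma$, and nothing in your argument yields a surface whose \emph{interior} crosses $\gamma$ (in your construction $\gamma$ sits on the boundary of $A^u$ and $A^s$, whereas Lemma~\ref{l:surgery} needs $\gamma\cap\interior(\Sigma)\neq\varnothing$ and $\partial\Sigma\cap K=\varnothing$). Finally, "replacing the outer boundary by periodic orbits shadowing the homoclinic loops" is not a surface modification you have described: the homoclinic orbits are not closed, and exhibiting a compact surface whose boundary consists of those shadowing periodic orbits is again the heart of the matter. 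You flag these points as "the hard part", and indeed they are the proof.

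What the paper does instead avoids invariant-manifold pieces altogether. Choosing $D$ with $D\cap K=\{z_0\}$, the transverse homoclinics in all four separatrices give two heteroclinic rectangles $R_1,R_2\subset D$ and return maps $\psi_i$ with images $R_1',R_2'$ forming a horseshoe; symbolic dynamics produces three closed Reeb orbits $\gamma_1,\gamma_2,\gamma_3$ (from the words $\overline{1}$, $\overline{2}$ and the alternating word), meeting $D$ exactly in the points $x_1$, $x_2$, and $\{y_1,y_2\}$. One then takes a compact disk $Q_0\subset D$ containing $z_0$ with boundary $\sigma_1\cup\psi(\sigma_1)\cup\sigma_2\cup\psi(\sigma_2)$, where $\sigma_i\subset R_i$ joins $x_i$ to $y_i$, and attaches the two flow strips $Q_i=\{\phi_t(z):z\in\sigma_i,\ t\in[0,\tau(z)]\}$. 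The union is an immersed pair of pants with boundary $\gamma_1\cup\gamma_2\cup\gamma_3$; only the thin strips are tangent to $X$, and a perturbation of the interior rel boundary makes it transverse while it still passes through $z_0\in\gamma$, and $D\cap K=\{z_0\}$ forces $\partial\Sigma\cap K=\varnothing$ (there is also a small correction, when $\gamma$ is negatively hyperbolic, to ensure the orbits through $x_1$ and $x_2$ are distinct). This is the missing idea: span explicit horseshoe periodic orbits by a cross-section disk plus flow strips, rather than trying to flatten pieces of $\Ws(\gamma)\cup\Wu(\gamma)$ into a section.
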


\begin{proof}
Let $K=\Krad\cup\Kbr$ be the binding of the broken book. We assume by contradiction that a broken binding component $\gamma\subset\Kbr$ has transverse homoclinics in all the separatrices. We shall employ a construction due to Fried \cite[Sect.~2]{Fried:1983uj}.

\begin{figure}
\begin{footnotesize}
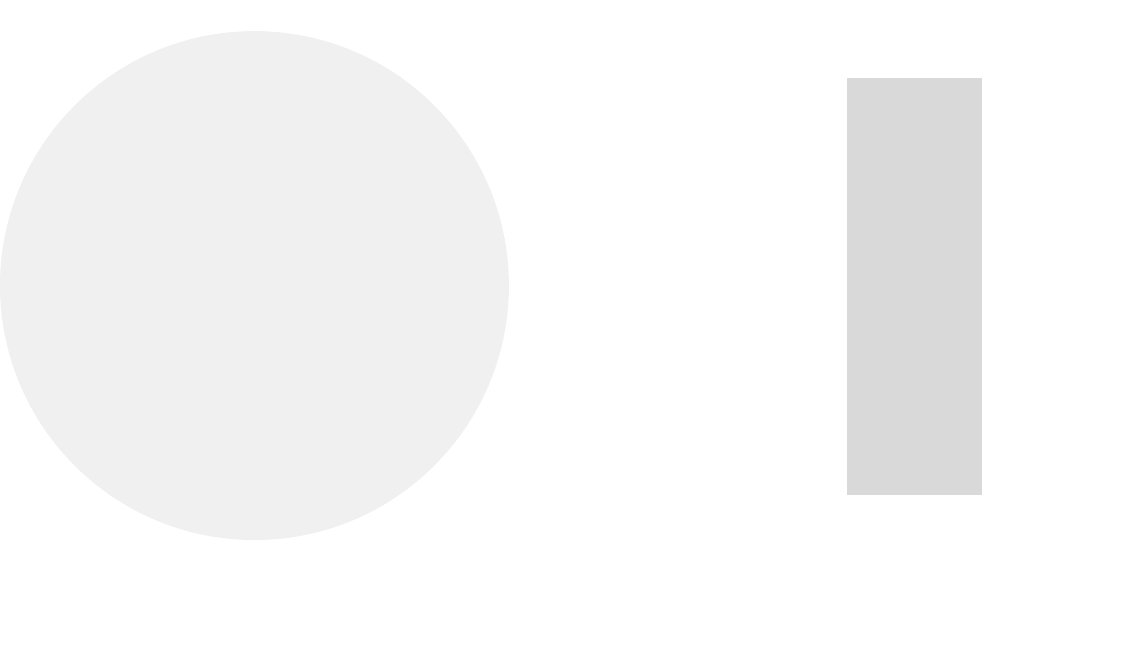
\end{footnotesize}
\caption{}
\label{f:transverse}
\end{figure}

We fix any point $z_0\in\gamma$ and a small embedded open disk $D\subset N$ transverse to the Reeb vector field $X$ and containing $z_0$. In particular, we require $D$ to be small enough so that 
\begin{align}
\label{e:small_disk}
D\cap K=\{z_0\}. 
\end{align}
We denote by $\ls\subset\Ws(\gamma)\cap D$ and $\lu\subset\Wu(\gamma)\cap D$ the path-connected components containing $z_0$, and we write $\ls\setminus\{z_0\}$ and $\lu\setminus\{z_0\}$ as the disjoint union of separatrices \[
\lu\setminus\{z_0\}=\lu_1\cup\lu_2,
\qquad
\ls\setminus\{z_0\}=\ls_1\cup\ls_2.
\] 
The fact that $\gamma$ has transverse homoclinics in all the separatrices implies that, up to switching the names of $\ls_1$ and $\ls_2$, there exist points $z_i\in\lu_i$ arbitrarily close to $z_0$, and arbitrarily large positive numbers $t_i>0$ such that $\phi_{t_i}(z_i)\in\ls_i$. By the implicit function theorem, there exists a maximal connected open subsets $U_i\subset D$ containing $z_i$ and smooth functions $\tau_i:U_i\to(0,\infty)$ such that $\tau_i(z_i)=t_i$ and $\psi_i(z):=\phi_{\tau_i(z)}(z)\in D$ for all $z\in U_i$.

We denote by $q_i\subset \Ws(\gamma)\cap D$ the path connected component containing $z_i$, and by $h_i\subset \Wu(\gamma)\cap D$ the path connected component containing $\phi_{t_i}(z_i)$. Up to choosing $t_i$ large enough, there exist points of transverse intersection
\begin{align*}
 w_{i,j}\in q_i\cap h_j,\qquad \forall i,j\in\{1,2\}.
\end{align*}
see Figure~\ref{f:transverse}(a). We denote by $q_i'\subset q_i$  the compact interval with boundary $\partial q_i'=\{w_{i,1},w_{i,2}\}$, and we set $q_i'':=\psi_i(q_i')$. Notice that we can make the heteroclinic points $w_{i,1}$ and $w_{i,2}$ arbitrarily close to $z_i$, and therefore the segment $q_i'$ to be arbitrarily short, by choosing $t_i$ to be large. In particular, we require $t_i$ to be large enough so that
\begin{align}
\nonumber 
q_i'\subset U_i,&\qquad \forall i\in\{1,2\},
\\
\label{e:wij}
w_{i,j}\in\psi_j(U_j),&\qquad\forall i,j\in\{1,2\},\\
\label{e:disjoint_flowout}
\phi_{t}(q_i'') \cap q_i''=\varnothing,& \qquad\forall t>0.
\end{align}

Since $\gamma$ has homoclinics in all the separatrices, by the $\lambda$-lemma the stable manifold $\Ws(\gamma)$ accumulates on $q_i$ from both sides, and the unstable manifold $\Wu(\gamma)$ accumulates on $\ls_i$ from both sides. Therefore, $z_i$ is contained in a heteroclinic rectangle $R_i$ that is an arbitrarily small neighborhood of $q_i'$. Namely, $R_i\subset D$ is an open disk containing $q_i'$ and with boundary 
\[\partial R_i=q_{i,1}\cup q_{i,2}\cup h_{i,1}\cup h_{i,2},\] where  $q_{i,1}, q_{i,2}\subset \Ws(\gamma)$ and $h_{i,1}, h_{i,2}\subset \Wu(\gamma)$ as in Figure~\ref{f:transverse}(b). We set
\begin{align*}
 R_i':=\psi_i(R_i).
\end{align*}
By~\eqref{e:wij}, we can choose the vertical edges $q_{i,1}, q_{i,2}\subset R_i$ so that
\begin{align*}
w_{1,j},w_{2,j}\in R_j',\qquad\forall j\in\{1,2\}.
\end{align*}
Finally, by~\eqref{e:disjoint_flowout}, we can choose the horizontal edges $h_{i,1}, h_{i,2}\subset R_i$ to be sufficiently close to $\partial q_i'$ so that
\begin{align*}
\phi_{t}(R_i')\cap R_i' = \varnothing,\qquad
\forall t>0.
\end{align*}
The heteroclinic rectangles $R_1,R_2$ and their images $R_1',R_2'$ intersect as in Figure~\ref{f:rectangles}.

\begin{figure}
\begin{footnotesize}
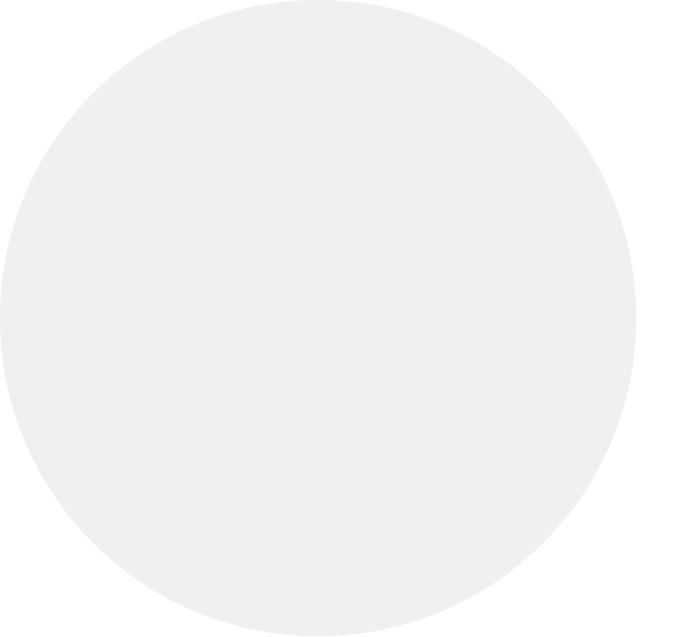
\end{footnotesize}
\caption{The heteroclinic rectangles $R_1, R_2, R_1', R_2'$.}
\label{f:rectangles}
\end{figure}
  
We consider the smooth diffeomorphism
\begin{align*}
 \psi:R_1\cup R_2\to R_1'\cup R_2', \qquad \psi|_{W_1}=\psi_1,\ \ \psi|_{W_2}=\psi_2,
\end{align*} 
Notice that $\psi$ is the first arrival map from $R_1\cup R_2$ to $R_1'\cup R_2'$. Namely, we have
\begin{align*}
 \psi(z)=\phi_{\tau(z)}(z),\qquad\forall z\in R_1\cup R_2,
\end{align*}
where
\begin{align*}
 \tau(z)=\min\big\{ t>0\ \big|\ \phi_t(z)\in R_1'\cup R_2'\big\}, \qquad\forall z\in R_1\cup R_2.
\end{align*}

\begin{figure}
\begin{footnotesize}
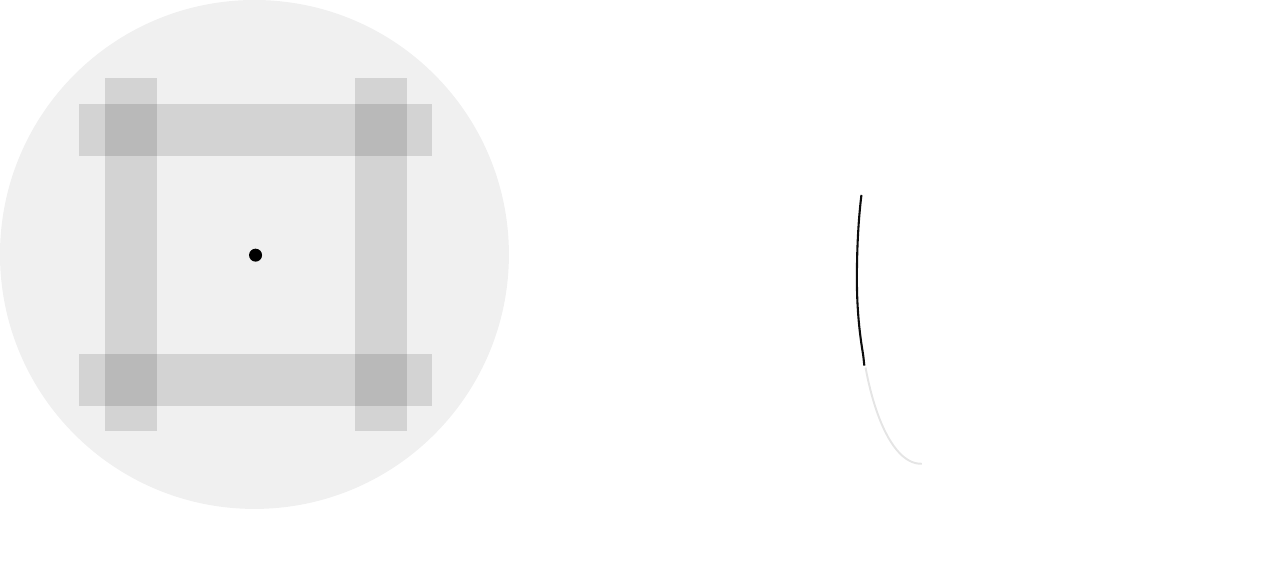
\end{footnotesize}
\caption{Fried's pair of pants.}
\label{f:fried}
\end{figure}

Since the intersections $R_i\cap R_j'$ contain the transverse homoclinics $w_{i,j}$,  we can employ symbolic dynamics as follows. We consider the compact invariant subset
\begin{align*}
 \Lambda = \bigcap_{n\in\Z} (\psi^n)^{-1}(R_1\cup R_2)\subset \bigcup_{i,j=1,2}\big( R_i\cap R_j' \big).
\end{align*}
This invariant subset is a horseshoe: there is a homeomorphism 
\begin{align*}
\kappa:\Lambda\toup^{\cong}\{1,2\}^{\Z},
\qquad
 \kappa(z)_n=
 \left\{
  \begin{array}{@{}ll}
    1, & \mbox{if }\psi^n(z)\in R_1,\vspace{5pt}\\ 
    2, & \mbox{if }\psi^n(z)\in R_2, \\ 
  \end{array} 
 \right.
\end{align*}
which conjugates the dynamics on $\Lambda$ according to the following commutative diagram:
\[\begin{tikzcd}[row sep=large]
 \Lambda
 \arrow[r, "\kappa"] 
 \arrow[d, "\psi"']
 &
 \{1,2\}^{\Z} 
 \arrow[d, "\shift"]
 \\
 \Lambda
 \arrow[r, "\kappa"] 
 &
 \{1,2\}^{\Z} 
\end{tikzcd}\]
Here, $\shift(a_n)=(a_{n+1})$. We consider the periodic words 
\begin{align*}
 \bm1 & =(...,1,1,1,1,...),\\
 \bm2 & =(...,2,2,2,2,...),\\
 \bm a & =(...,a_{-1},a_0,a_1,a_2,...), \mbox{ with } a_{2n}=1,\ a_{2n+1}=2,\ \forall n\in\Z.
\end{align*}
The corresponding points 
\begin{align*}
x_1 & :=\kappa^{-1}(\bm1)\in R_1\cap R_1',\\
x_2 & :=\kappa^{-1}(\bm2)\in R_2\cap R_2', \\
y_1 & :=\kappa^{-1}(\bm a)\in R_2'\cap R_1
\end{align*}
lie on closed Reeb orbits. If $\gamma$ is negatively hyperbolic, with an unlucky choice of the point $z_2$ and of the corresponding rectangle $R_2$, the closed Reeb orbits through $x_1$ and $x_2$ may coincide, but we can easily avoid this by  replacing the point $z_2$ with a transverse homoclinic point in $\lu_2\cap\Ws(\gamma)$ closer to $z_0$, so that the rectangle $R_2$ does not intersect the closed Reeb orbit through $x_1$. Therefore the closed Reeb orbits $\gamma_1(t)=\phi_t(x_1)$ and $\gamma_2(t)=\phi_t(x_2)$ are distinct, and intersect $D$ only in $x_1$ and $x_2$ respectively. On the other hand, the closed Reeb orbit $\gamma_3(t):=\phi_t(y_1)$ intersects $D$ in exactly two points: $y_1$ and
$y_2:=\kappa^{-1}(\shift(\bm a))\in R_1'\cap R_2$.
We consider a compact disk $Q_0\subset D$ with piecewise smooth boundary 
\[\partial Q_0=\sigma_1\cup \psi(\sigma_1)\cup \sigma_2\cup\psi(\sigma_2),\]
where each $\sigma_i\subset R_i$ is a smooth path joining $x_i$ and $y_i$, see Figure~\ref{f:fried}(a). Next, we consider the strips
\begin{align*}
Q_i:=\big\{ \phi_t(z)\ \big|\ z\in\sigma_i,\ t\in[0,\tau(z)] \big\}.
\end{align*}
The union $\Upsilon:=Q_0\cup Q_1\cup Q_2$ is a piecewise smooth pair of pants immersed in $N$. As a topological manifold, it has boundary $\partial\Upsilon=\gamma_1\cup\gamma_2\cup\gamma_3$ that is embedded in $N$, see Figure~\ref{f:fried}(b). Notice that $\interior(Q_0)$ is transverse to the Reeb vector field $X$, whereas $Q_1$ and $Q_2$ are tangent to $X$.

With a perturbation of $\interior(\Upsilon)$, we obtain a smooth immersed pair of pants $\Sigma$ with the same boundary $\partial\Sigma=\partial\Upsilon$, and with interior $\interior(\Sigma)$ that is transverse to $X$. Since $\partial\Sigma$ is embedded in $N$, it has a neighborhood $U\subset\Sigma$ that is embedded into $N$ as well. By~\eqref{e:small_disk}, $\partial\Sigma$ is disjoint from the binding $K$ of the broken book. Therefore, we can apply Lemma~\ref{l:surgery}, which provides a new broken book decomposition of $(N,\lambda)$ with broken binding $\Kbr\setminus\gamma$. This contradicts the fact that the original broken book decomposition was minimal.
\end{proof}

\begin{Lemma}
\label{l:positively_hyperbolic}
Let $(N,\lambda)$ be a non-degenerate closed contact 3-manifold, equipped with a minimal broken book decomposition. If $\gamma\subset\Kbr$ is a broken binding component admitting transverse homoclinics, then $\gamma$ is positively hyperbolic.
\end{Lemma}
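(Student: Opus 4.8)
The plan is to argue by contradiction. Since a broken binding component is hyperbolic by definition, it suffices to rule out the possibility that $\gamma$ is negatively hyperbolic; I will do this by showing that a negatively hyperbolic broken binding component admitting a transverse homoclinic must in fact have transverse homoclinics in \emph{all} of its separatrices, which is forbidden by Lemma~\ref{l:no_homoclinics_in_all_separatrices} in a minimal broken book decomposition.

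To carry this out, I would fix $z_0\in\gamma$ and a small embedded disk $D\subset N$ transverse to the Reeb vector field at $z_0$, small enough that the local stable and unstable manifolds of $\gamma$ meet $D$ precisely in the arcs $\ls$ and $\lu$ through $z_0$, with separatrices $\lu\setminus\{z_0\}=\lu_1\cup\lu_2$ and $\ls\setminus\{z_0\}=\ls_1\cup\ls_2$. Let $f$ be the first-return map of the Reeb flow to $D$, defined near $z_0$; then $f(z_0)=z_0$, the derivative of $f$ along $\lu$ at $z_0$ is the unstable Floquet multiplier $\sigma^{-1}$, and the derivative of $f$ along $\ls$ at $z_0$ is the stable Floquet multiplier $\sigma$. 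The key elementary observation is that, if $\gamma$ is negatively hyperbolic, then $\sigma,\sigma^{-1}<0$, so $f$ restricts near $z_0$ to an orientation-reversing (expanding) map of $\lu$ and to an orientation-reversing (contracting) map of $\ls$; in particular $f$ interchanges $\lu_1$ with $\lu_2$ and $\ls_1$ with $\ls_2$ in a small neighborhood of $z_0$. Now take a transverse homoclinic point $z$ of $\gamma$, which exists by hypothesis. As $t\to-\infty$ the orbit of $z$ converges to $\gamma$ along $\Wu(\gamma)$ and eventually stays in a tubular neighborhood of $\gamma$ on which $\Wu(\gamma)$ is the local unstable manifold; hence the backward orbit of $z$ meets $D$ along a sequence of points $p_k\in\lu$ with $p_{k+1}=f^{-1}(p_k)$ and $p_k\to z_0$. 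Each $p_k$ lies on $\Ws(\gamma)$ and the intersection $\Wu(\gamma)\pitchfork\Ws(\gamma)$ is transverse at $p_k$, by flow-invariance of $\Ws(\gamma)$ and of transversality of intersections; and since $f^{-1}$ swaps the two sides of $z_0$ in $\lu$, the $p_k$ meet both $\lu_1$ and $\lu_2$. Running the same argument forward in time, with the contracting return map on $\ls$, produces transverse homoclinic points in both $\ls_1$ and $\ls_2$. Thus $\gamma$ has transverse homoclinics in all the separatrices, contradicting Lemma~\ref{l:no_homoclinics_in_all_separatrices}, and therefore $\gamma$ must be positively hyperbolic.

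The only point requiring care is the claim that, for $k$ large, the crossings $p_k$ of the backward orbit of $z$ with $D$ genuinely lie on the component $\lu$ of $\Wu(\gamma)\cap D$ through $z_0$ and that consecutive ones are related by the \emph{local} return map $f^{-1}$; this is where the smallness of $D$ together with the standard local theory of hyperbolic closed orbits (the existence of a tubular neighborhood of $\gamma$ in which $\Wu(\gamma)$ is the local unstable manifold) is used. The remaining ingredients --- the sign of the Floquet multipliers of a negatively hyperbolic orbit, the resulting orientation behaviour of $f$ on $\lu$ and $\ls$, and the invariance under the flow of membership in $\Ws(\gamma)$ and of transversality --- are routine.
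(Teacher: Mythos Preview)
Your argument is correct and follows essentially the same route as the paper's proof: assume $\gamma$ is negatively hyperbolic, use that the first-return map near $z_0$ swaps the two unstable (resp.\ stable) separatrices because the Floquet multipliers are negative, and conclude that a single transverse homoclinic forces transverse homoclinics in all separatrices, contradicting Lemma~\ref{l:no_homoclinics_in_all_separatrices}. The only cosmetic difference is that the paper simply takes homoclinic points $z\in\lu_i\cap\Ws(\gamma)\cap U$ and $z'\in\ls_j\cap\Wu(\gamma)\cap U$ already lying in the domain of the return map and applies $\psi$ once, whereas you explain more explicitly (via the backward/forward iterates $p_k$) why such points exist arbitrarily close to $z_0$.
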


\begin{proof}
Let us assume by contradiction that a broken binding component $\gamma\subset\Kbr$ is negatively hyperbolic and has transverse homoclinics. We denote by $\sigma\in(-1,0)$ and $\sigma^{-1}\in(-\infty,-1)$ the Floquet multipliers of $\gamma$.
We fix any point $z_0\in\gamma$, and consider the minimal period $t_0$ of $\gamma$. The tangent space $T_{z_0}N$ splits as
\begin{align*}
T_{z_0}N=\Es(z_0)\oplus\Eu(z_0)\oplus X(z_0),
\end{align*}
where $\Es(z_0)=\ker(d\phi_{t_0}(z_0)-\sigma I)$ and $\Eu(z_0)=\ker(d\phi_{t_0}(z_0)-\sigma^{-1} I)$. We consider a small embedded open disk $D\subset N$ containing $z_0$, with tangent space
\begin{align}
\label{e:tangent_space}
 T_{z_0}D=\Es(z_0)\oplus\Eu(z_0).
\end{align}
As usual, we require $D$ to be small enough so that it is everywhere transverse to the Reeb vector field $X$, and the path-connected components $\ls\subset\Ws(\gamma)\cap D$ and $\lu\subset\Wu(\gamma)\cap D$ containing $z_0$ intersect only in  $z_0$ and both separate $D$. We write $\lu\setminus\{z_0\}$ and $\ls\setminus\{z_0\}$ as the disjoint union of separatrices $\lu\setminus\{z_0\}=\lu_1\cup\lu_2$ and $\ls\setminus\{z_0\}=\ls_1\cup\ls_2$. 

Let $U\subset D$ be an open neighborhood of $z_0$ that is small enough so that the first-return map $\psi:U\to D$, $\psi(z)=\phi_{\tau(z)}(z)$ is well defined and smooth. Equation~\eqref{e:tangent_space}  implies that $d\psi(z_0)=d\phi_{t_0}(z_0)$, and
\begin{align*}
 T_{z_0}\ls=\Es(z_0),
 \qquad
 T_{z_0}\lu=\Eu(z_0).
\end{align*}
Since $d\psi(z_0)|_{\Es}=\sigma I$ and $d\psi(z_0)|_{\Eu}=\sigma^{-1} I$, and since $\sigma<0$, the first-return map $\psi$ switches the separatrices, i.e.
\begin{align*}
\psi(\ls_i\cap U)\subset\ls_{3-i},
\quad
\psi(\lu_i\cap U)\subset\lu_{3-i},\qquad\forall i=1,2.
\end{align*}
By our assumption, there exist $i,j\in\{1,2\}$ and transverse homoclinic intersections $z\in\lu_i\cap\Ws(\gamma)\cap U$ and $z'\in\ls_j\cap\Wu(\gamma)\cap U$. Therefore, $\psi(z)\in\lu_{3-i}\cap\Ws(\gamma)$ is a transverse homoclinic intersection in the other unstable separatrix, and $\psi(z')\in\ls_{3-j}\cap\Wu(\gamma)$ is a transverse homoclinic intersection in the other stable separatrix. This shows that $\gamma$ has transverse intersections in all the separatrices, which is prevented by Lemma~\ref{l:no_homoclinics_in_all_separatrices} due to the minimality of the broken book decomposition.
\end{proof}

\section{Geodesic flows}
\label{s:geodesic_flows}

We now consider the geodesic flow $\phi_t:SM\to SM$ of a closed Riemannian surface $(M,g)$. Such a $\phi_t$ is the Reeb flow of the Liouville contact form
\begin{align*}
 \lambda_{(x,v)}(w)=g(v,d\pi(x,v)w),\qquad\forall (x,v)\in SM,\ w\in T_{(x,v)}M,
\end{align*}
where $\pi:SM\to M$, $\pi(x,v)=x$ is the base projection. We recall that $(M,g)$ is called \emph{bumpy} when its unit tangent bundle $(SM,\lambda)$ is non-degenerate in the sense of Section~\ref{ss:KS}: none of the Floquet multipliers of the closed orbits of the geodesic flow is a complex root of unity. Moreover, $g$ is said to satisfy the \emph{Kupka-Smale condition} when so does $(SM,\lambda)$ as a closed contact 3-manifold: $(M,g)$ is bumpy and the stable and unstable manifolds of the closed orbits of its geodesic flow intersect transversely.
In this section, we shall provide a simpler proof of  Theorem~\ref{t:Birkhoff_section} in the special case of geodesic flows.

\begin{Thm}
\label{t:Birkhoff_section_geodesic_flow}
On any closed surface, any Riemannian metric satisfying the Kupka-Smale condition admits a Birkhoff section for its geodesic flow.
\end{Thm}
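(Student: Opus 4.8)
The plan is to reduce Theorem~\ref{t:Birkhoff_section_geodesic_flow} to the statement that the unit tangent bundle $(SM,\lambda_g)$ — which satisfies the Kupka-Smale condition by hypothesis — admits a broken book decomposition with empty broken binding, since in that case any page is a Birkhoff section. By the theorem of Colin, Dehornoy, and Rechtman, $(SM,\lambda_g)$ admits \emph{some} broken book decomposition; I would fix a \emph{minimal} one $B$, with binding $K=\Krad\cup\Kbr$, and argue by contradiction assuming $\Kbr\neq\varnothing$, aiming to produce an immersed surface of section violating minimality through Lemma~\ref{l:surgery}.

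The extra structure available for geodesic flows is the \emph{flip} $\iota\colon SM\to SM$, $\iota(x,v)=(x,-v)$. It is a smooth involution with $\iota^*\lambda_g=-\lambda_g$ and $\phi_t\circ\iota=\iota\circ\phi_{-t}$; consequently it sends each closed Reeb orbit $\gamma$ to a closed Reeb orbit $\bar\gamma:=\iota(\gamma)$ with the same Floquet multipliers (so positive hyperbolicity is preserved), it exchanges stable and unstable manifolds, $\iota(\Ws(\gamma))=\Wu(\bar\gamma)$ and $\iota(\Wu(\gamma))=\Ws(\bar\gamma)$, it exchanges stable with unstable separatrices, and — since transversality to the flow is insensitive to time reversal, and the defining conditions of a broken book decomposition are symmetric under swapping $\Ws\leftrightarrow\Wu$ at the broken binding — it maps broken book decompositions to broken book decompositions preserving the number of broken binding components. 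In particular $\iota(B)$ is again a minimal broken book decomposition, with broken binding $\iota(\Kbr)$.

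Now, by Lemma~\ref{l:two_homoclinics} there is a broken binding component $\gamma\subset\Kbr$ with transverse homoclinics in both stable separatrices, and by Lemma~\ref{l:positively_hyperbolic} this $\gamma$ is positively hyperbolic, so that no separatrix switching occurs. Applying $\iota$ to these homoclinic points shows that $\bar\gamma$ has transverse homoclinics in both unstable separatrices. Let $c\subset M$ be the closed geodesic underlying $\gamma$, and consider its Birkhoff annulus $A_c\looparrowright SM$, the annulus of unit vectors based on $c$ lying in the half-circle from $\dot c$ to $-\dot c$: its interior is transverse to the geodesic flow, and its boundary is precisely $\partial A_c=\gamma\cup\bar\gamma$. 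Using the homoclinic loops of $\gamma$ in its two stable separatrices and of $\bar\gamma$ in its two unstable separatrices, a Fried-type surgery near the two boundary circles — splicing in pieces of $\Wu$ and $\Ws$ along homoclinic rectangles, exactly as in the proof of Lemma~\ref{l:no_homoclinics_in_all_separatrices} — should detach $\partial A_c$ from $\gamma\cup\bar\gamma$ and, because the spliced pieces follow $\Wu(\gamma)$ and $\Ws(\gamma)$, which cross $\gamma$ transversely, produce an immersed surface of section $\Sigma\looparrowright SM$ whose interior meets $\gamma$ transversely and whose boundary is disjoint from $K$ (this last point forced by taking the transverse disks around the chosen points of $\gamma$ and $\bar\gamma$ small enough to meet $K$ only at their centers). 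Lemma~\ref{l:surgery} would then yield a broken book decomposition with broken binding $\Kbr\setminus\gamma$, contradicting the minimality of $B$. Hence $\Kbr=\varnothing$, and any page of $B$ is a Birkhoff section for the geodesic flow. Equivalently, this surgery exhibits transverse homoclinics of $\gamma$ in all four separatrices once the distinction between $\gamma$ and $\bar\gamma$ is mediated by $A_c$, contradicting Lemma~\ref{l:no_homoclinics_in_all_separatrices} directly.

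The main obstacle is precisely the step of the previous paragraph: promoting the ``both stable separatrices'' conclusion of Lemma~\ref{l:two_homoclinics} to something that resolves the broken binding component $\gamma$. The flip by itself is not enough, because $\bar\gamma\neq\gamma$ in general; what makes it work is that the Birkhoff annulus $A_c$ is exactly the surface whose two boundary components are $\gamma$ and $\bar\gamma$, so it ties together the homoclinic data at $\gamma$ (stable) and at $\bar\gamma$ (unstable). The delicate part is then the careful bookkeeping in Fried's boundary resolution: verifying that, with only the stable homoclinics at the $\gamma$-end and only the unstable homoclinics at the $\bar\gamma$-end, one can still splice the surface so that its boundary detaches cleanly from $K$ while its interior genuinely crosses $\gamma$ transversely — which is what feeds Lemma~\ref{l:surgery}.
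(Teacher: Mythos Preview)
Your outline has a genuine gap at exactly the point you yourself flag: the ``Fried-type surgery near the two boundary circles'' of the Birkhoff annulus $A_c$. Two concrete problems. First, the sentence ``the spliced pieces follow $\Wu(\gamma)$ and $\Ws(\gamma)$, which cross $\gamma$ transversely'' is false: the invariant manifolds $\Wu(\gamma)$ and $\Ws(\gamma)$ \emph{contain} $\gamma$, they do not cross it. Second, and more seriously, the construction in the proof of Lemma~\ref{l:no_homoclinics_in_all_separatrices} cannot be carried out ``exactly'' with your data. That construction needs homoclinics in \emph{all four} separatrices of a single orbit in order to build the full horseshoe and then find four periodic points $x_1,x_2,y_1,y_2$, one in each quadrant around $z_0\in\gamma$, whose convex hull $Q_0$ contains $z_0$ in its interior. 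With only the two stable separatrices of $\gamma$ available, the horseshoe you get gives periodic points only on one side of the unstable line $\lu$, so no such $Q_0$ exists; and the homoclinics you have at the other end live at $\bar\gamma$, a different orbit, so they do not help build a transverse square around $z_0\in\gamma$. Detaching the boundary $\partial A_c$ from $\gamma$ by any such procedure pushes it \emph{away} from $\gamma$; nothing in what you wrote supplies a mechanism by which the interior of the new surface would acquire a transverse intersection with $\gamma$, which is precisely what Lemma~\ref{l:surgery} needs.

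The paper's argument is genuinely different and sidesteps this issue. It never works with the Birkhoff annulus of the broken-binding geodesic $c$ itself. Instead it proves a separate lemma (Lemma~\ref{l:no_intersecting_closed_geodesic}): no closed geodesic with $(\gamma\cup\overline\gamma)\cap K=\varnothing$ can intersect the projection of $\Kbr$. The proof of that lemma does use a Birkhoff annulus --- but of the \emph{other} geodesic, the one crossing $c$, so that $\gamma$ automatically sits transversely in its interior and Lemma~\ref{l:surgery} applies directly. The rest of the paper's proof manufactures such a crossing geodesic: it shows $c$ has no conjugate points and preserves orientation, so a tubular neighborhood $A$ of $c$ is an annulus with two sides $A_1,A_2$; the two stable-separatrix homoclinics force their underlying geodesics to lie on opposite sides of $c$; and then a shadowing argument on suitable periodic pseudo-orbits built from these homoclinics produces a genuine closed geodesic that must switch sides, hence intersects $c$, contradicting Lemma~\ref{l:no_intersecting_closed_geodesic}. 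The flip $\iota$ plays no role.
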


The proof requires the following preliminary lemma. For each orbit $\gamma(t)=\phi_t(x,v)$, we denote by $\overline\gamma(t):=\phi_t(x,-v)$ the orbit associated to the reversed underlying geodesic.

\begin{Lemma}
\label{l:no_intersecting_closed_geodesic}
Let $(M,g)$ be a bumpy closed Riemannian surface,  whose unit tangent bundle is equipped with a minimal broken book decomposition with binding $K=\Krad\cup\Kbr$ and broken binding $\Kbr=\gamma_1\cup...\cup\gamma_n$. Let $x_i:=\pi\circ \gamma_i$ be the closed geodesics underlying the broken binding orbits. Then, there is no closed orbit $\gamma$ of the geodesic flow such that $(\gamma\cup\overline\gamma)\cap K=\varnothing$ and whose underlying geodesic $x:=\pi\circ\gamma$ intersects $x_1\cup...\cup x_n$. 
\end{Lemma}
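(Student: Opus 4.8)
The plan is to argue by contradiction: suppose such a closed orbit $\gamma$ exists, with underlying geodesic $x=\pi\circ\gamma$ crossing some $x_i$, and such that $(\gamma\cup\overline\gamma)\cap K=\varnothing$. Since $x$ and $x_i$ are closed geodesics on the surface $M$ that intersect, at a crossing point $p\in x\cap x_i$ the two velocity directions span $T_pM$, so in $SM$ the orbits $\gamma,\overline\gamma$ together with $\gamma_i,\overline\gamma_i$ produce fiber-wise intersection patterns near $\pi^{-1}(p)$. The key geometric input is that the broken binding orbit $\gamma_i$ is hyperbolic, hence so is $\overline\gamma_i$ (the geodesic flow is time-reversible, and reversal conjugates $\gamma_i$ to $\overline\gamma_i$ preserving the hyperbolic type, with stable and unstable manifolds swapped). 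I would first record this reversibility symmetry and note $\Ws(\overline\gamma_i)=\overline{\Wu(\gamma_i)}$ under the flip $v\mapsto -v$.

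Next I would exploit the crossing of $x$ with $x_i$ together with the fact that $\gamma$ is a closed orbit disjoint from $K$: a standard argument in surface geodesic dynamics (going back to Birkhoff, and used in the homoclinic-construction literature) shows that the orbit of a geodesic crossing a hyperbolic closed geodesic transversally must, under forward or backward iteration of an appropriate return map, be pushed toward the stable or unstable manifold of the broken binding orbit. Concretely, I would take a transverse disk $D$ to the Reeb flow at a point of $\gamma_i$, look at the trace of $\Ws(\gamma_i),\Wu(\gamma_i)$ and of $\gamma$ on $D$, and use the crossing condition to force $\gamma$ to enter one of the four hyperbolic sectors of the broken binding from Figure~\ref{f:radial}(b). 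Since $\gamma$ is periodic and avoids $K$, this eventually forces a transverse heteroclinic or homoclinic connection involving $\gamma_i$; combined with the reversibility symmetry (which supplies the ``mirror'' connection through $\overline\gamma$), one obtains transverse homoclinics in all the separatrices of $\gamma_i$ (or of some broken binding component reachable from it via Lemma~\ref{l:heteroclinics}). This contradicts Lemma~\ref{l:no_homoclinics_in_all_separatrices}, since the broken book decomposition is minimal.

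The main obstacle I expect is the middle step: turning the purely topological fact ``$x$ crosses $x_i$'' into the dynamical statement ``$\gamma$ forces homoclinics in all four separatrices of a broken binding orbit.'' One has to be careful that a single crossing only gives, a priori, information about one pair of separatrices; getting all four will require either using both orientations of $\gamma$ (via $\overline\gamma$ and the reversibility symmetry $v\mapsto -v$, which is exactly why the hypothesis involves $\gamma\cup\overline\gamma$ rather than just $\gamma$) or iterating the return map in both time directions and invoking the $\lambda$-lemma (\cite[Prop.~6.2.23]{Katok:1995th}) to spread the accumulation to all separatrices, as in the discussion preceding Lemma~\ref{l:no_homoclinics_in_all_separatrices}. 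A secondary technical point is handling the possibility that the heteroclinic produced lands on a \emph{different} broken binding component than $\gamma_i$; here Lemma~\ref{l:heteroclinics} guarantees that every stable/unstable separatrix of every broken binding orbit already contains a heteroclinic to another broken binding orbit, so one chases the connection through the finite broken binding $\Kbr=\gamma_1\cup\dots\cup\gamma_n$ until the separatrices close up, again contradicting minimality via Lemma~\ref{l:no_homoclinics_in_all_separatrices}.
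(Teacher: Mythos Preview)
Your approach has a genuine gap, and it is not the route the paper takes.

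The core problem is the step where you want to convert the geometric crossing $x\cap x_i\neq\varnothing$ in $M$ into a dynamical statement about homoclinics of $\gamma_i$ in $SM$. A transverse intersection of the base geodesics at a point $p$ only says that $\gamma$ passes through the fiber $\pi^{-1}(p)$ at a direction different from $\pm\dot x_i$. It does \emph{not} force $\gamma$ to come near $\gamma_i$ in $SM$, let alone to enter one of the hyperbolic sectors of $\gamma_i$ on a transverse disk $D$ through a point of $\gamma_i$: the orbit $\gamma$ might not intersect any small $D$ at all. Consequently there is no mechanism to push $\gamma$ toward $\Ws(\gamma_i)$ or $\Wu(\gamma_i)$, the $\lambda$-lemma has nothing to bite on, and no homoclinics of $\gamma_i$ are produced. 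The reversibility symmetry $v\mapsto -v$ and the heteroclinic chasing via Lemma~\ref{l:heteroclinics} are both irrelevant until you have at least one transverse intersection of $\Wu(\gamma_i)$ (or $\Ws(\gamma_i)$) with something controlled by $\gamma$, and that first intersection is exactly what your argument does not supply.

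The paper's proof is completely different and much more direct. Given the closed geodesic $x$ underlying $\gamma$, one builds the classical Birkhoff annulus $\Sigma$ over $x$: the set of unit vectors based on $x$ pointing to one side (or, if $x$ reverses orientation, the full immersed annulus of unit vectors over $x$). This $\Sigma$ is an immersed surface of section with boundary $\partial\Sigma$ covering $\gamma\cup\overline\gamma$; the hypothesis $(\gamma\cup\overline\gamma)\cap K=\varnothing$ guarantees $\partial\Sigma\cap K=\varnothing$. At the transverse crossing point $p=x(t)=x_i(s)$ the velocity $\dot x_i(s)$ is transverse to $\dot x(t)$, so the point $\gamma_i(s)=(p,\dot x_i(s))$ lies in $\interior(\Sigma)$. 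Now Lemma~\ref{l:surgery} applies verbatim and produces a broken book with broken binding $\Kbr\setminus\gamma_i$, contradicting minimality. The hypothesis on $\overline\gamma$ is there because $\overline\gamma$ is a boundary component of the Birkhoff annulus, not for any reversibility-of-homoclinics reason.
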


\begin{proof}
Let us assume by contradiction that there exists a closed  orbit $\gamma$ of the geodesic flow such that $(\gamma\cup\overline\gamma)\cap K=\varnothing$ and whose underlying geodesic $x:=\pi\circ\gamma:\R/T\Z\to M$ intersects some $x_i$. Notice that $x$ and $x_i$ must intersect transversely, since they are distinct closed geodesics. If $x$ preserves the orientation (that is, $TM|_{x}$ is an orientable bundle over the circle), we denote by $\nu$ the vector field defined along $x$ that is normal to $\dot x$, i.e.
\begin{align*}
 \|\nu\|_g\equiv1, \qquad g(\nu(x(t)),\dot x(t))=0,
\end{align*}
and such that $g(\nu(x(t)),\dot x_i(s))>0$ at some  intersection point $x(t)=x_i(s)$; we consider the embedded compact annulus
\begin{align*}
 \Sigma:=\Big\{ (x(t),v)\in SM\ \Big|\ t\in\R/T\Z,\ g(\nu(x(t)),v)\geq0\Big\}
\end{align*}
with boundary $\partial\Sigma=\gamma\cup\overline\gamma$.
If instead $x$ reverses the orientation (that is, $TM|_{x}$ is a non-orientable bundle over the circle), we consider the immersed compact annulus $\Sigma\looparrowright SM$ whose boundary is a double cover $\partial\Sigma\looparrowright\gamma\cup\overline\gamma$, and whose interior is given by
\begin{align*}
 \interior(\Sigma)
 =
 \Big\{ (x(t),v)\ \Big|\ t\in\R/T\Z,\ v\in S_{x(t)}M\setminus\{\dot x(t),-\dot x(t)\}\Big\}.
\end{align*}
In both cases, $\Sigma$ is an immersed surface of section whose boundary $\partial\Sigma$ is disjoint from the binding $K$, and whose interior $\interior(\Sigma)$ is embedded in $SM$ and intersects the broken binding component $\gamma_i=(x_i,\dot x_i)\subset\Kbr$. Therefore, we can apply Lemma~\ref{l:surgery}, which provides a new broken book decomposition of the unit tangent bundle $SM$ with broken binding $\Kbr\setminus\gamma_i$. This contradicts the fact that the original broken book decomposition was minimal.
\end{proof}

\begin{proof}[Proof of Theorem~\ref{t:Birkhoff_section_geodesic_flow}]
Let $(M,g)$ be a closed surface satisfying the Kupka-Smale condition. According to a theorem of Colin, Dehornoy, and Rechtman \cite{Colin:2020tl}, its unit tangent bundle $(SM,\lambda)$ admits a minimal broken book decomposition. All we have to show is that the broken binding $\Kbr$ is empty, so that the broken book is actually a rational open book, and any page is a Birkhoff section for the geodesic flow of $(M,g)$. We prove this by contradiction, assuming that $\Kbr\neq\varnothing$.

Lemma~\ref{l:two_homoclinics} implies that there exists a broken binding component $\gamma\subset\Kbr$ with transverse homoclinics in both stable separatrices. By Lemma~\ref{l:positively_hyperbolic}, $\gamma=(x,\dot x)$ is positively hyperbolic. Let $t_0>0$ be the minimal period of $\gamma$, and $\sigma\in(0,1)$ the stable Floquet multiplier of $\gamma$, i.e.
\begin{align*}
\det(d\phi_{t_0}(\gamma(t))-\sigma I)=0,
\qquad\forall t\in\R. 
\end{align*}
We denote by $\Es$ and $\Eu$ the stable and unstable bundles of $\gamma$, i.e.
\begin{align*}
 \Es(\gamma(t))=\ker(d\phi_{t_0}(\gamma(t))-\sigma I),
 \qquad
  \Eu(\gamma(t))=\ker(d\phi_{t_0}(\gamma(t))-\sigma^{-1} I).
\end{align*}
We consider an open disk $D\subset SM$ containing the point $z_0=\gamma(0)$, with tangent space $T_{z_0}D=\Es(z_0)\oplus\Eu(z_0)$, and small enough so that it is everywhere transverse to the geodesic vector field $X$, and the path-connected components $\ls\subset\Ws(\gamma)\cap D$ and $\lu\subset\Wu(\gamma)\cap D$ containing $z_0$ intersect only at $z_0$ and both separate $D$. We write $\lu\setminus\{z_0\}$ and $\ls\setminus\{z_0\}$ as the disjoint union of separatrices $\lu\setminus\{z_0\}=\lu_1\cup\lu_2$ and $\ls\setminus\{z_0\}=\ls_1\cup\ls_2$. By our assumption on $\gamma$, there exist transverse intersections $z_i\in\ls_i\cap\Wu(\gamma)$ for all $i\in\{1,2\}$. We denote the corresponding homoclinic orbits by $\zeta_i(t):=\phi_t(z_i)$, and the underlying geodesics by $x_i:=\pi\circ\zeta_i$.

We claim that the closed geodesic $x:=\pi\circ\gamma$ underlying $\gamma$ is without conjugate points, that is,
\begin{align*}
d\phi_t(z_0)w\not\in\ker (d\pi(\phi_t(z_0))),
\qquad
\forall t\neq0,\ w\in\ker(d\pi(z_0))\setminus\{0\}.
\end{align*}
Indeed, assume that $x$ has conjugate points.  Under this assumption it is well known that, for each $z\in N\setminus\gamma$ sufficiently close to $z_0$, the corresponding geodesic $y(t):=\pi\circ\phi_t(z)$ intersects  $x$, see e.g.\ \cite[Lemma~5.9]{De-Philippis:2020wz}. 
In particular, the geodesic $x_1$ underlying the homoclinic $\zeta_1$ must intersect $x$ transversely. Since $\zeta_1=(x_1,\dot x_1)$ is a transverse homoclinic of $\gamma$, for each $\epsilon>0$ and  $S>0$, by the shadowing lemma \cite[Theorem~5.3.3]{Fisher:2019vz} there exists a closed orbit $\zeta=(y,\dot y)$ of the geodesic flow such that 
\begin{align*}
\max_{t\in[-S,S]} d(x_1(t),y(t)) < \epsilon.
\end{align*}
Here, $d:M\times M\to[0,\infty)$ is the Riemannian distance. By taking $\epsilon>0$ small enough and $S>0$ large enough, we can ensure that the closed  orbit $\zeta$ and its reverse $\overline\zeta$ are not in the binding $K$, and the underlying closed geodesic $y$ intersects $x$ transversely. This contradicts Lemma~\ref{l:no_intersecting_closed_geodesic}.

Since the closed geodesic $x$ is without conjugate points, the stable bundle $\Es$ intersects trivially the vertical sub-bundle $\ker(d\pi)\subset T(SM)$, i.e.
\begin{align}
\label{e:trivial_intersection_with_vertical}
 \Es(\gamma(t))\cap\ker(d\pi(\gamma(t)))=\{0\},\qquad\forall t\in\R.
\end{align}
Indeed, if $w\in \Es(\gamma(t))\cap \ker(d\pi(\gamma(t)))$, then $d\phi_{t_0}(\gamma(t))w=\sigma w\in\ker(d\pi(\gamma(t)))$, and since $x$ has no conjugate points we must have $w=0$.

We claim that the closed geodesic $x:\R/t_0\Z\to M$ does not reverse the orientation. Indeed, the stable sub-bundle $\Es$ is contained in the contact distribution $\ker(\lambda)$. Therefore
\begin{align*}
0=\lambda(w)=g(\dot x(t),d\pi(\gamma(t)) w),\qquad\forall w\in\Es(\gamma(t)).
\end{align*}
This, together with~\eqref{e:trivial_intersection_with_vertical}, implies that, for each non-zero $w\in\Es(z_0)$, the vector field $W(t):=d(\pi\circ\phi_t)(z_0)w$ is nowhere vanishing, orthogonal to $\dot\gamma(t)$, and such that $W(t_0)=\sigma W(0)$. Since $\sigma>0$, this proves that $x$ does not reverse the orientation, and therefore there exists tubular neighborhood $A\subset M$ of $x$ that is diffeomorphic to an open annulus. We write the  complement of $x$ in this annulus as a union of connected components as \[A\setminus x=A_1\cup A_2.\]

Consider again the homoclinics $\zeta_i(t)=(x_i(t),\dot x_i(t))=\phi_t(z_i)$, for $i=1,2$. For some positive real numbers $a_i,b_i>0$, we have
\begin{align*}
 \lim_{t\to\infty} d\big(\zeta_i(-t-a_i),\gamma(-t)\big)=0,
 \qquad
 \lim_{t\to\infty} d\big(\zeta_i(t+b_i),\gamma(t)\big)=0,
\end{align*}
where $d:SM\to SM\to[0,\infty)$ now denotes the distance on $SM$ induced by $g$.
We already showed that none of the underlying geodesics $x_i$ can intersect $x$. Since $z_1$ and $z_2$ belong to different stable separatrices $\ls_1$ and $\ls_2$, for $t>0$ large enough the points $x_1(t)$ and $x_2(t)$ lie on different sides of the closed geodesic $x$, say $x_1(t)\in A_1$ and $x_2(t)\in A_2$. We have two cases to consider:
\begin{itemize}
\item Assume that, for all $t>0$ large enough, one such homoclinic $x_i$ satisfies $x_i(-t)\in A_{3-i}$. Namely, the homoclinic $x_i$ switches component of $A\setminus x$ as $t$ goes from $-\infty$ to 
$\infty$. For each positive integer $n\in\N$, we set $T_n:=2nt_0+a_i+b_i$ and define the $T_n$-periodic pseudo-orbit $\beta_n:\R\to SM$ by
\begin{align*}
 \beta_n(t+kT_n):=\zeta_i(t),\qquad\forall k\in\Z,\ t\in(-nt_0-a_i,nt_0+b_i].
\end{align*}
For each $n\in\N$ large enough (and therefore $T_n$ large enough) and $\epsilon>0$ small enough, we have that 
$\pi\circ\beta_n(T_n-\epsilon)\in A_i$ and $\pi\circ\beta_n(T_n+\epsilon)\in A_{3-i}$.
The jumps of these pseudo-orbits tend to zero as $n\to \infty$, i.e.
\begin{align*}
\lim_{n\to\infty}\lim_{\epsilon\to 0^+} d(\beta_n(T_n-\epsilon),\beta_n(T_n+\epsilon))=0, 
\end{align*}
since both $\beta_n(T_n-\epsilon)$ and $\beta_n(T_n+\epsilon)$ tend to $\gamma(0)$. 

\item Assume that, for all $t>0$ large enough, we have $x_1(-t)\in A_1$ and $x_2(-t)\in A_2$.  For each positive integer $n\in\N$, we set $T_n:=4nt_0+a_1+b_1+a_2+b_2$ and define the $T_n$-periodic pseudo-orbit $\beta_n:\R\to SM$ by
\begin{align*}
\qquad\qquad
\beta_n(t+kT_n):=
\left\{
  \begin{array}{@{}ll}
    \zeta_1(t+nt_0+b_1), & \forall k\in\Z,\ t\in(-2nt_0-a_1-b_1,0],\vspace{5pt}\\ 
    \zeta_2(t-nt_0-a_2), & \forall k\in\Z,\ t\in(0,2nt_0+a_2+b_2].
  \end{array}
\right.
\end{align*}
For each $n\in\N$ large enough (and therefore $T_n$ large enough) and $\epsilon>0$ small enough, we have $\pi\circ\beta_n(-\epsilon)\in A_1$
and $\pi\circ\beta_n(\epsilon)\in A_2$.
The jumps of these pseudo-orbits tend to zero as $n\to \infty$, i.e.
\begin{gather*}
\lim_{n\to\infty}\lim_{\epsilon\to 0^+}
\Big(
d\big(\beta_n(-\epsilon),\beta_n(\epsilon)\big)
+d\big(\beta_n(T_n-\epsilon),\beta_n(T_n+\epsilon)\big)\Big)=0;
\end{gather*}
since the four points $\beta_n(-\epsilon)$, $\beta_n(\epsilon)$, $\beta_n(T_n-\epsilon)$, $\beta_n(T_n+\epsilon)$ all tend to $\gamma(0)$.
\end{itemize}

In both cases, for any $\epsilon>0$ and for all $n\in\N$ large enough (so that $T_n$ is large enough), the shadowing lemma \cite[Theorem~5.3.3]{Fisher:2019vz} implies that there exists a periodic orbit $\beta=(w,\dot w)$ of the geodesic flow that is $\epsilon$-close to the pseudo-orbit $\beta_n$ up to time-reparametrization. By choosing $\epsilon>0$ small enough and $n\in\N$ large enough, we infer that the closed orbit $\beta$ does not belong to the binding $K$, and the underlying closed geodesic $w=\pi\circ\beta$ intersects $x$ transversely. However, this contradicts Lemma~\ref{l:no_intersecting_closed_geodesic}.
\end{proof}

\section{Reeb flows}
\label{s:Reeb}

The following theorem, which may have independent interest, is the last ingredient for the proof of Theorem~\ref{t:Birkhoff_section}.
In the statement, we employ the terminology introduced in Section~\ref{ss:transverse_homoclinics_in_all_separatrices}.

\begin{Thm}
\label{t:homoclinics}
Let $(N,\lambda)$ be a closed contact 3-manifold satisfying the Kupka-Smale condition,
equipped with a broken book decomposition with broken binding $\Kbr$.
Any component $\gamma\subset \Kbr$ has homoclinics in all the separatrices, and satisfies 
\[\overline{\Ws(\gamma)}=\overline{\Wu(\gamma)}.\]
\end{Thm}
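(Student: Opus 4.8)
The plan is to deduce Theorem~\ref{t:homoclinics} from the lemmas of Section~\ref{s:construction}, using the fact that a broken book decomposition can always be replaced by a minimal one with the same manifold and contact form (only the number of broken binding components is required to be $b_{\min}$, and the $\lambda$-lemma accumulation statements depend only on the intrinsic dynamics, not on the chosen broken book). Fix a broken binding component $\gamma\subset\Kbr$ and pass to a minimal broken book. By Lemma~\ref{l:heteroclinics}(i), each of the (one or two) path-connected components $W'$ of $\Wu(\gamma)\setminus\gamma$ contains a heteroclinic towards some broken binding orbit; combined with the corresponding statement for $\Ws(\gamma)\setminus\gamma$ from Lemma~\ref{l:heteroclinics}(ii) and a connectedness/graph argument among broken binding orbits, together with Lemma~\ref{l:two_homoclinics}, one wants to upgrade ``heteroclinics somewhere in the broken binding'' to ``transverse homoclinics in every separatrix of $\gamma$''. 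The key leverage is Lemma~\ref{l:no_homoclinics_in_all_separatrices} (no minimal broken binding component has transverse homoclinics in all separatrices) and Lemma~\ref{l:positively_hyperbolic} (a broken binding component with transverse homoclinics is positively hyperbolic), which together with the separatrix-switching analysis for negatively hyperbolic orbits constrain the possibilities enough to force the homoclinic structure.

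The first main step is to establish that $\gamma$ has homoclinics in all four separatrices. I would argue as follows: suppose not, say $\gamma$ has no homoclinic in $\lu_1$. Lemma~\ref{l:heteroclinics}(i) gives a heteroclinic from the component of $\Wu(\gamma)\setminus\gamma$ containing $\lu_1$ to some broken binding orbit $\gamma'$; following the broken book's hyperbolic sectors and iterating Lemma~\ref{l:heteroclinics} both forward and backward produces a chain of broken binding orbits linked by heteroclinics that must eventually close up (finiteness of $\Kbr$), yielding — via the $\lambda$-lemma and the accumulation of $\Wu$ on $\Ws$ along transverse heteroclinics — a transverse homoclinic for some orbit in the chain, and in fact for $\gamma$ itself once one tracks the separatrices through the $\lambda$-lemma. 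The positively-hyperbolic reduction (Lemma~\ref{l:positively_hyperbolic}) then prevents the ``separatrix-switching'' escape that would otherwise allow a homoclinic in only one of $\lu_1,\lu_2$. Iterating the $\lambda$-lemma accumulation (as done in the discussion preceding Lemma~\ref{l:no_homoclinics_in_all_separatrices}) propagates a homoclinic in one separatrix to all of them; the content of Lemma~\ref{l:no_homoclinics_in_all_separatrices} is precisely that this cannot happen in a \emph{minimal} broken book unless... — so the contradiction structure must be arranged carefully: the point is that, \emph{given} the Kupka-Smale hypothesis, a broken binding component of a minimal broken book has homoclinics in all separatrices simultaneously, and Lemma~\ref{l:no_homoclinics_in_all_separatrices} says this minimal configuration is itself contradictory \emph{unless the broken binding is already as small as forced}; i.e.\ the real argument shows $\Kbr$ can only consist of such orbits, and the existence of even one forces, via Fried's pair-of-pants surgery inside Lemma~\ref{l:no_homoclinics_in_all_separatrices}, a reduction of $\Kbr$, contradicting minimality. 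So the clean statement: under Kupka-Smale, every broken binding component of a minimal broken book \emph{would} have transverse homoclinics in all separatrices, which by Lemma~\ref{l:no_homoclinics_in_all_separatrices} is impossible, hence $\Kbr=\varnothing$ — but wait, that would prove Theorem~A outright, not Theorem~\ref{t:homoclinics}, so instead the theorem is stated for an \emph{arbitrary} broken book, and the homoclinic-in-all-separatrices conclusion is obtained before invoking minimality at all, by the heteroclinic-chain plus $\lambda$-lemma argument directly.

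The second main step is the equality $\overline{\Ws(\gamma)}=\overline{\Wu(\gamma)}$. Given transverse homoclinics in all separatrices of $\gamma$, the $\lambda$-lemma shows $\Wu(\gamma)$ accumulates on $\ls_1$ and $\ls_2$, hence on all of $\ls$, hence (by invariance under the flow and the fact that $\Ws(\gamma)=\bigcup_{r}\phi_r(\ls)$ locally, plus $\Ws(\gamma)$ being the image of $\ls$ under the flow) $\overline{\Wu(\gamma)}\supseteq\Ws(\gamma)$, and symmetrically $\overline{\Ws(\gamma)}\supseteq\Wu(\gamma)$. Combined with the $\lambda$-lemma in the other direction — $\Ws(\gamma)$ accumulates on $\lu$ — one gets $\overline{\Ws(\gamma)}\supseteq\Wu(\gamma)$ and $\overline{\Wu(\gamma)}\supseteq\Ws(\gamma)$; taking closures, $\overline{\Ws(\gamma)}\supseteq\overline{\Wu(\gamma)}$ and $\overline{\Wu(\gamma)}\supseteq\overline{\Ws(\gamma)}$, giving equality. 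The only subtlety is that $\Ws(\gamma)$ and $\Wu(\gamma)$ are injectively immersed, not embedded, so ``accumulates on $\ls$'' must be upgraded to ``accumulates on every point of $\Ws(\gamma)$'' by flowing: any $p\in\Ws(\gamma)$ has $\phi_r(p)\in\ls$ for $r$ large, and since $\Wu(\gamma)$ is flow-invariant and closed-invariant, $p\in\overline{\Wu(\gamma)}$.

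The hard part will be the first step — turning the merely-existential heteroclinic information of Lemma~\ref{l:heteroclinics} into \emph{transverse homoclinics in each individual separatrix of the specific orbit $\gamma$}. This requires a careful bookkeeping argument over the finite directed graph whose vertices are broken binding orbits and whose edges are heteroclinic connections (with separatrix labels), showing every separatrix of every vertex both emits and receives a connection, then using the $\lambda$-lemma to ``short-circuit'' a cycle through $\gamma$ into a homoclinic, and the positivity of the hyperbolic multiplier (Lemma~\ref{l:positively_hyperbolic}, which one must first check applies here, possibly after passing to a minimal broken book) to ensure the homoclinic lands in the intended separatrix rather than being routed away by separatrix-switching. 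Handling the negatively hyperbolic broken binding orbits in the chain, where $\Ws$ and $\Wu$ each have their two separatrices swapped by the first return map, is the most delicate combinatorial point.
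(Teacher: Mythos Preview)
Your proposal has a genuine gap in both main steps, and the missing idea is the same in both: the \emph{area argument} coming from the contact form.

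For the closure equality $\overline{\Ws(\gamma)}=\overline{\Wu(\gamma)}$, you claim the $\lambda$-lemma alone gives it once there are transverse homoclinics. This is not correct. The $\lambda$-lemma tells you that forward iterates of a transversal to $\Ws$ accumulate on $\Wu_{\mathrm{loc}}$, but the transversals you have at hand are pieces of $\Wu$ or of $\Ws$ themselves, and iterating those only gives the trivial inclusions $\Wu_{\mathrm{loc}}\subset\overline{\Wu}$ and $\Ws_{\mathrm{loc}}\subset\overline{\Ws}$. In non-conservative hyperbolic dynamics the closures of $\Ws$ and $\Wu$ can genuinely differ. The paper's Lemma~\ref{l:closures_Ws_Wu} proves the equality (assuming only \emph{one} homoclinic, not all four) by an argument that is essentially conservative: it uses Poincar\'e recurrence for the area-preserving return map on a page, together with Lemma~\ref{l:large_area}, which bounds from below by the period of $\gamma$ the $d\lambda$-area of any disk in a page bounded by a circle in $\Ws(\gamma)$ or $\Wu(\gamma)$ (via Stokes and $d\lambda(X,\cdot)=0$). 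These ingredients are absent from your sketch.

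For ``homoclinics in all separatrices'', your heteroclinic-chain/graph idea does show every broken binding component has \emph{some} homoclinic (this is the paper's $\Gamma$-argument), but your step ``iterating the $\lambda$-lemma propagates a homoclinic in one separatrix to all of them'' does not work for positively hyperbolic orbits, where the return map fixes each separatrix. The paper closes this gap with Lemma~\ref{l:homoclinics_heteroclinics}: given that both $\alpha$ and $\beta$ already have homoclinics, a heteroclinic in a specific branch $P\subset\Wu(\alpha)\setminus\alpha$ toward $\beta$ can be reversed and turned into a homoclinic in $P$. The proof of that lemma again uses the area lower bound (Lemma~\ref{l:large_area}) together with the closure equality (Lemma~\ref{l:closures_Ws_Wu}), building a small heteroclinic rectangle near $\beta$ and arguing that a piece of $\Ws(\alpha)$ entering it must exit through $\Wu(\beta)$, not $\Ws(\beta)$.

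Finally, your attempt to route the argument through the Section~\ref{s:construction} lemmas is circular: those lemmas concern a \emph{minimal} broken book, and passing to a minimal one may discard $\gamma$ from the broken binding altogether. In the paper's logic, Theorem~\ref{t:homoclinics} is proved for an arbitrary broken book using only Lemmas~\ref{l:heteroclinics}, \ref{l:large_area}, \ref{l:closures_Ws_Wu}, \ref{l:homoclinics_heteroclinics}; the minimality lemmas of Section~\ref{s:construction} are then combined with Theorem~\ref{t:homoclinics} to deduce Theorem~\ref{t:Birkhoff_section}.
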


Let us first employ this theorem in order to conclude the proof of Theorem~\ref{t:Birkhoff_section}.

\begin{proof}[Proof of Theorem~\ref{t:Birkhoff_section}]
Let $(N,\lambda)$ be a closed contact 3-manifold satisfying the Kupka-Smale condition. We consider a minimal broken book decomposition of $(N,\lambda)$, with binding $K=\Krad\cup\Kbr$ and pages $\FF$. Lemma~\ref{l:no_homoclinics_in_all_separatrices} implies that no broken binding component $\gamma\subset\Kbr$ has  homoclinics in all the separatrices. Therefore, Theorem~\ref{t:homoclinics} implies that the broken binding $\Kbr$ is empty. Namely, the minimal broken book is a rational open book, and any page $\Sigma\in\FF$ is a Birkhoff section.
\end{proof}

The proof of Theorem~\ref{t:homoclinics} will require three preliminary lemmas. From now on, we consider a closed contact manifold $(N,\lambda)$ satisfying the Kupka-Smale condition, equipped with a broken book decomposition with binding $K=\Krad\cup\Kbr$ and pages $\FF$. As usual, we denote by $X$ the Reeb vector field, and by $\phi_t:N\to N$ the Reeb flow.

\begin{Lemma}
\label{l:large_area}
Let $\Sigma\subset N$ be a surface of section, and $\gamma$ a hyperbolic closed Reeb orbit of minimal period $p>0$. Let $W$ be a path-connected component of either $\Ws(\gamma)\setminus\gamma$ or $\Wu(\gamma)\setminus\gamma$. Any  embedded disk $D\subset\Sigma$ whose boundary $\partial D$ is a smooth embedded circle in $\Sigma\cap W$ has area
\begin{align*}
 \area(D,d\lambda):=\int_{D} d\lambda \geq p.
\end{align*}
Here, $D$ is oriented so that $d\lambda|_D$ is a positive area form.
\end{Lemma}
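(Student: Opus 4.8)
The plan is to exploit the fact that $d\lambda$ is a closed $2$-form, combined with the invariance and expansion/contraction properties of the flow on $W$. Suppose first that $W$ is a path-connected component of $\Wu(\gamma)\setminus\gamma$. The boundary circle $\zeta := \partial D$ lies in $\Sigma\cap W$, and since $W$ is invariant, $\phi_{-np}(\zeta)$ also lies in $W$ for every $n\in\N$. Moreover, because $W\subset\Wu(\gamma)$, the backward iterates $\phi_{-np}(\zeta)$ converge (uniformly) to the closed orbit $\gamma$ as $n\to\infty$: a point in $\Wu(\gamma)$ moves toward $\gamma$ under the backward flow, and the time-$p$ return keeps us on the same circle's worth of data, so $\phi_{-np}(\zeta)$ is squeezed onto $\gamma$. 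First I would make this precise: $\zeta$, being a circle in $\Wu(\gamma)\setminus\gamma$, winds once around $\gamma$ (it separates $\gamma$ from the end of $W$ inside the immersed surface $\Wu(\gamma)$), and $\phi_{-np}(\zeta)\to\gamma$ in the Hausdorff sense, with $\int_{\phi_{-np}(\zeta)}\lambda\to p$ since $\lambda(X)\equiv1$ and $\gamma$ has period $p$.

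Next I would produce, for each $n$, an embedded disk $D_n$ with $\partial D_n=\phi_{-np}(\zeta)$: simply take $D_n$ to be the subsurface of the immersed annulus-like region of $\Wu(\gamma)$ between $\phi_{-np}(\zeta)$ and $\gamma$, capped appropriately, or more robustly push $D$ backward, $D_n := \phi_{-np}(D)$, which is an immersed disk with the correct boundary (it need not stay in $\Sigma$, but that is irrelevant for a $d\lambda$-integral). Since $d(\phi_{-np})^*d\lambda=d\lambda$, we get $\area(D_n,d\lambda)=\area(D,d\lambda)$ for all $n$. Now consider the closed (immersed, piecewise smooth) surface $D_n\cup (-A_n)$, where $A_n\subset\Wu(\gamma)$ is the strip in the unstable manifold between $\phi_{-np}(\zeta)$ and $\gamma$ (a collar of $\gamma$ inside $W\cup\gamma$), with matching boundary $\phi_{-np}(\zeta)$ on one side and $\gamma$ on the other. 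Here I use that $\Wu(\gamma)$ near $\gamma$ is an embedded annulus (from the structure of $\Wu(\gamma)$ recalled in Section~\ref{ss:KS}), so $A_n$ is genuinely a strip whose boundary is $\phi_{-np}(\zeta)\sqcup\gamma$. The $2$-chain $D_n-A_n$ has boundary $\gamma$ (the $\phi_{-np}(\zeta)$ contributions cancel), hence by Stokes
\begin{align*}
 \area(D_n,d\lambda)-\int_{A_n}d\lambda = \int_{D_n-A_n}d\lambda = \int_{\partial(D_n-A_n)}\lambda = \int_{\gamma}\lambda = p.
\end{align*}
Therefore $\area(D,d\lambda) = \area(D_n,d\lambda) = p + \int_{A_n}d\lambda$. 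Since $d\lambda$ is an area form on $\Wu(\gamma)$ restricted to the region where that surface is transverse to $X$ — and actually on $W$, which is transverse to $X$ everywhere, $d\lambda|_W$ is a positive (or at least nonnegative, with the correct orientation) $2$-form — we have $\int_{A_n}d\lambda\ge 0$, giving $\area(D,d\lambda)\ge p$.

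The case $W\subset\Ws(\gamma)\setminus\gamma$ is symmetric, replacing backward iterates by forward iterates $\phi_{np}$, which contract $\zeta$ onto $\gamma$; the orientation bookkeeping is the only thing that changes sign in intermediate steps, and one checks it is consistent with the stated orientation convention on $D$ (the one making $d\lambda|_D$ positive). The main obstacle I anticipate is the orientation/sign discussion: one must verify that with $D$ oriented so that $d\lambda|_D>0$, the induced orientation on $\partial D=\zeta$ is the one that makes $\int_\zeta\lambda>0$ and compatible with the collar $A_n$ of $\gamma$ oriented so that $\int_\gamma\lambda = p > 0$; equivalently, that the "defect" term $\int_{A_n}d\lambda$ genuinely has the correct (nonnegative) sign rather than the opposite. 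This hinges on the fact that on the immersed surface $W$, transverse to the Reeb flow, $d\lambda$ restricts to a nowhere-zero area form, and that the collar $A_n\subset W$ inherits from $W$ the orientation for which $\gamma$ (as a Reeb orbit, i.e. with $\dot\gamma = X$) is positively oriented in $\partial A_n$ — this is exactly the same convention that orients $D$ via $d\lambda|_D>0$. Once this is pinned down, the estimate is immediate and in fact shows $\area(D,d\lambda) = p + \area(A_n, d\lambda)$ for every $n$, with $\area(A_n,d\lambda)\to 0$ as $n\to\infty$, so $p$ is the sharp lower bound.
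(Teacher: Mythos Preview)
Your approach is at heart the same as the paper's---apply Stokes to an annulus in $W$ bounded by $\partial D$ and $\gamma$---but it rests on a significant conceptual error that complicates the argument needlessly. You write that ``on $W$, which is transverse to $X$ everywhere, $d\lambda|_W$ is a positive (or at least nonnegative \ldots) $2$-form.'' This is backwards: the stable and unstable manifolds are \emph{invariant} under the Reeb flow, so $X$ is \emph{tangent} to $W$, not transverse. Since $d\lambda(X,\cdot)\equiv 0$, it follows that $d\lambda|_W\equiv 0$ identically. This is precisely what the paper exploits: taking the annulus $A\subset W$ with $\partial A=\partial D\cup m\gamma$ (where $m=1$ if $\gamma$ is positively hyperbolic and $m=2$ if negatively hyperbolic, since then the end of $W$ doubly covers $\gamma$), one obtains directly
\[
\int_D d\lambda=\int_{\partial D}\lambda=\int_A d\lambda+m\int_\gamma\lambda=0+mp\geq p.
\]
No backward iterates $\phi_{-np}$, no limits, and no sign analysis of $\int_{A_n}d\lambda$ are needed. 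Your limit argument would still reach the right inequality (since $0\geq 0$), but it rests on a false premise and obscures the mechanism; note also that your ``winds once'' and ``$\int_\gamma\lambda=p$'' miss the multiplicity $m=2$ in the negatively hyperbolic case, though the inequality survives.

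There is also a genuine gap: you assert that $\zeta=\partial D$ is non-contractible in the annulus $W$ but do not justify it. The paper argues this via Poincar\'e--Bendixson: if $\zeta$ bounded a disk $D'\subset W$, then since $\zeta\subset\Sigma$ is transverse to $X$ while $X$ is tangent to $D'$, there would be a closed orbit in $D'$, contradicting the fact that every point of $W$ is asymptotic to $\gamma$. (Alternatively, once you know $d\lambda|_W=0$, contractibility in $W$ would force $\int_D d\lambda=\int_{D'}d\lambda=0$, contradicting positivity of $d\lambda|_\Sigma$.)
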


\begin{proof}
The Reeb vector field is tangent to $W$. 
 Since the boundary circle $\partial D$ is contained in $\Sigma$, 
 it is transverse to the Reeb vector field $X$. If $\partial D$ were contractible in $W$, it would bound a disk $D'\subset W$, and Poincar\'e-Bendixon theorem would imply that $D'$ contains a closed orbit of the Reeb flow; this would contradict the fact that all points in $W$ are asymptotic to $\gamma$ (in the future or in the past, depending on whether $W\subset\Ws(\gamma)$ or $W\subset\Wu(\gamma)$). We conclude that $\partial D$ is non-contractible in $W$. 
 Since $W$ is homeomorphic to an open annulus, there exists an open sub-annulus $A\subset W$ with boundary $\partial A=\gamma\cup\partial D$. If $\gamma$ is positively hyperbolic, then $\overline A$ is embedded in $N$, and we set $m:=1$. If instead $\gamma$ is negatively hyperbolic, then $\partial A\setminus\partial D$ is a double cover of the closed Reeb orbit $\gamma$, and we set $m:=2$. 
We orient $D$ by means of the area form $d\lambda|_D$, and $\partial D$ as its boundary. Since $d\lambda(X,\cdot)\equiv 0$, and since $A$ is tangent to $X$, we conclude that
\[
\area(D,d\lambda)
=\int_D d\lambda 
=\int_{\partial D} \lambda
=\int_{A} d\lambda + m\int_\gamma \lambda = m\,p\geq p.
\qedhere
\]
\end{proof}

\begin{Lemma}\label{l:closures_Ws_Wu}
Any broken binding component $\gamma\subset\Kbr$ having a homoclinic satisfies
\[\overline{\Wu(\gamma)}=\overline{\Ws(\gamma)}.\]
\end{Lemma}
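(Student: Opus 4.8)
The plan is to exploit the fact that, in a broken book decomposition, every page is a surface of section, so one can measure the ``size'' of pieces of $\Ws(\gamma)$ and $\Wu(\gamma)$ against the symplectic area provided by Lemma~\ref{l:large_area}. The core idea: if a branch $W'$ of $\Wu(\gamma)\setminus\gamma$ were \emph{not} contained in $\overline{\Ws(\gamma)}$, then a compact piece of $W'$ sitting inside a single page $\Sigma$ would be an annular region of bounded area, and iterating the flow would inflate it without bound — contradicting Lemma~\ref{l:large_area}, which caps the area of disks bounded by curves on $\Sigma\cap W'$. So let me spell out the steps.

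\textbf{Step 1 (symmetry reduction).} By the orbit-reversal symmetry of the problem, it suffices to prove $\overline{\Wu(\gamma)}\subseteq\overline{\Ws(\gamma)}$; the reverse inclusion follows by the same argument applied to the time-reversed flow (which exchanges stable and unstable manifolds, and for which the broken book decomposition still has $\gamma$ as a broken binding component with a homoclinic, since a homoclinic of $\gamma$ is also a homoclinic for the reversed flow).

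\textbf{Step 2 ($\lambda$-lemma, density inside the homoclinic branch).} Fix the transverse homoclinic of $\gamma$; it lies in some branch $\lu_i\cap\Ws(\gamma)$. By the $\lambda$-lemma, $\overline{\Ws(\gamma)}$ contains the whole branch $\Wu(\gamma')$ passing through that homoclinic point — more precisely, iterates of a disk in $\Ws(\gamma)$ accumulate on $\Wu(\gamma)$ along that separatrix, so the relevant separatrix $\lu_i$ of $\Wu(\gamma)$ lies in $\overline{\Ws(\gamma)}$. Then, using invariance under the Reeb flow and the fact that $\phi_r(\Wu(\gamma(t)))=\Wu(\gamma(t+r))$, one propagates this: the full surface $\Wu(\gamma)$ restricted to that branch is contained in $\overline{\Ws(\gamma)}$. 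The point of this step is to get \emph{one} entire path-connected component $W'\subset\Wu(\gamma)\setminus\gamma$ inside $\overline{\Ws(\gamma)}$.

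\textbf{Step 3 (from one branch to all branches, via the area argument).} Suppose for contradiction some branch $W''\subset\Wu(\gamma)\setminus\gamma$ is \emph{not} contained in $\overline{\Ws(\gamma)}$. Pick a point $z\in W''$ with $z\notin\overline{\Ws(\gamma)}$, and a small transverse disk through $z$ inside a page $\Sigma$; then $W''\cap\Sigma$ near $z$ is an arc disjoint from $\Ws(\gamma)$. The heteroclinic Lemma~\ref{l:heteroclinics}(i) says $W''$ contains a heteroclinic to some broken binding orbit; combined with Step 2's branch and the transversality of the Kupka--Smale condition, one produces a large embedded annular region inside $W''$ whose boundary is a single curve in $\Sigma\cap W''$ but which wraps around $\gamma$ many times, forcing (by Lemma~\ref{l:large_area} and its proof, $\int_D d\lambda = m\,p\cdot(\text{winding})$) unbounded area — but the area of any disk in $\Sigma$ bounded by a curve in $\Sigma\cap W''$ is controlled by $\area(\Sigma,d\lambda)<\infty$. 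This contradiction shows every branch of $\Wu(\gamma)\setminus\gamma$ is in $\overline{\Ws(\gamma)}$, hence $\overline{\Wu(\gamma)}\subseteq\overline{\Ws(\gamma)}$, and Step 1 finishes the proof.

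\textbf{Main obstacle.} The delicate point is Step 3: turning ``$z\notin\overline{\Ws(\gamma)}$'' into a genuine \emph{quantitative} contradiction with Lemma~\ref{l:large_area}. One must carefully set up the embedded annulus — using the $\lambda$-lemma accumulation from Step 2 together with Kupka--Smale transversality to find, inside the page $\Sigma$, an embedded circle in $\Sigma\cap W''$ bounding a disk $D\subset\Sigma$ whose $d\lambda$-area is forced to be a large multiple of $p$ — while keeping $D$ honestly embedded in the page. This is exactly the kind of argument behind the broken book construction of Colin--Dehornoy--Rechtman, so the technical scaffolding is available, but assembling the annulus with the right winding number around $\gamma$ is where the real work lies.
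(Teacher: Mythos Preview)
Your Step~2 contains a genuine gap: the $\lambda$-lemma does \emph{not} give that a separatrix of $\Wu(\gamma)$ lies in $\overline{\Ws(\gamma)}$. What the $\lambda$-lemma yields from a transverse homoclinic is self-accumulation: iterates of a small piece of $\Wu(\gamma)$ through the homoclinic point (which is transverse to $\Ws(\gamma)$) accumulate on $\Wu_{\mathrm{loc}}(\gamma)$, and symmetrically for $\Ws(\gamma)$. A disk \emph{in} $\Ws(\gamma)$ stays in $\Ws(\gamma)$ under the flow, so its iterates do not accumulate on $\Wu(\gamma)$ unless you already know $\Wu(\gamma)\subset\overline{\Ws(\gamma)}$, which is the statement to prove. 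The only part of $\Wu(\gamma)$ you get inside $\Ws(\gamma)$ for free is the homoclinic orbit itself, a single curve in the two-dimensional separatrix. So Step~2 establishes nothing beyond the trivial, and Step~3 then has no starting branch to leverage.

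Step~3 also misreads Lemma~\ref{l:large_area}: the area of a disk in $\Sigma$ bounded by a circle in $\Sigma\cap W$ equals $mp$ with $m\in\{1,2\}$ fixed by the hyperbolicity type, not a growing multiple of any winding number, so there is no ``unbounded area'' mechanism of the kind you sketch. The paper's actual argument runs in the opposite direction: it shows $\Ws(\gamma)\subset\overline{\Wu(\gamma)}$ by taking a circle $S\subset\Ws(\gamma)\cap\Sigma$ close to $\gamma$, assuming some arc $\ell\subset S$ avoids $\overline{\Wu(\gamma)}$, enclosing $\ell$ in a thin heteroclinic rectangle $R$ of $d\lambda$-area strictly less than the minimal broken-binding period $p$, and then using Poincar\'e recurrence on the return map $\psi_n$ to $\Sigma$. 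The key step is to show $\partial(\psi_n(B))\subset\overline{\Wu(\gamma)}$ for a suitable component $B$; Lemma~\ref{l:large_area} enters here as an \emph{upper} bound obstruction (a circle in $\Ws(\Kbr)$ inside $R$ would bound a disk of area $\geq p$, impossible since $\area(R)<p$), not as a source of large area. An area-preservation argument then forces $\ell\subset\partial(\psi_n(B))\subset\overline{\Wu(\gamma)}$, the desired contradiction.
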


\begin{figure}
\begin{footnotesize}
\begingroup%
  \makeatletter%
  \providecommand\color[2][]{%
    \errmessage{(Inkscape) Color is used for the text in Inkscape, but the package 'color.sty' is not loaded}%
    \renewcommand\color[2][]{}%
  }%
  \providecommand\transparent[1]{%
    \errmessage{(Inkscape) Transparency is used (non-zero) for the text in Inkscape, but the package 'transparent.sty' is not loaded}%
    \renewcommand\transparent[1]{}%
  }%
  \providecommand\rotatebox[2]{#2}%
  \newcommand*\fsize{\dimexpr\f@size pt\relax}%
  \newcommand*\lineheight[1]{\fontsize{\fsize}{#1\fsize}\selectfont}%
  \ifx\svgwidth\undefined%
    \setlength{\unitlength}{186.32245122bp}%
    \ifx\svgscale\undefined%
      \relax%
    \else%
      \setlength{\unitlength}{\unitlength * \real{\svgscale}}%
    \fi%
  \else%
    \setlength{\unitlength}{\svgwidth}%
  \fi%
  \global\let\svgwidth\undefined%
  \global\let\svgscale\undefined%
  \makeatother%
  \begin{picture}(1,0.60648685)%
    \lineheight{1}%
    \setlength\tabcolsep{0pt}%
    \put(0,0){\includegraphics[width=\unitlength,page=1]{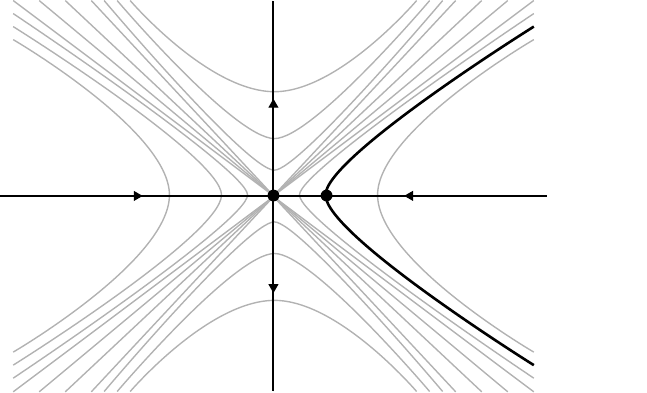}}%
    \put(0.73257007,0.26347949){\color[rgb]{0,0,0}\makebox(0,0)[lt]{\lineheight{1.25}\smash{\begin{tabular}[t]{l}$\Ws(\gamma)$\end{tabular}}}}%
    \put(0.78087277,0.49694572){\color[rgb]{0,0,0}\makebox(0,0)[lt]{\lineheight{1.25}\smash{\begin{tabular}[t]{l}$\Sigma$\end{tabular}}}}%
    \put(0.52325543,0.31329762){\color[rgb]{0,0,0}\makebox(0,0)[lt]{\lineheight{1.25}\smash{\begin{tabular}[t]{l}$S$\end{tabular}}}}%
    \put(0.44274987,0.32939864){\color[rgb]{0,0,0}\makebox(0,0)[lt]{\lineheight{1.25}\smash{\begin{tabular}[t]{l}$\gamma$\end{tabular}}}}%
    \put(0.43469954,0.57745127){\color[rgb]{0,0,0}\makebox(0,0)[lt]{\lineheight{1.25}\smash{\begin{tabular}[t]{l}$\Wu(\gamma)$\end{tabular}}}}%
  \end{picture}%
\endgroup%

\end{footnotesize}
\caption{View in a cross section: the page $\Sigma$ intersecting the path-connected component $L\subset\Ws(\gamma)\setminus\gamma$ in an embedded circle $S$ near the broken binding component~$\gamma$.}
\label{f:circle}
\end{figure}

\begin{proof}
We shall only prove that 
\begin{equation}\label{e:Ws_gamma_sub_ov_Wu_gamma}
\Ws(\gamma)\subset \overline{\Wu(\gamma)}.
\end{equation}
The other inclusion $\Wu(\gamma)\subset\overline{\Ws(\gamma)}$ 
follows by applying~\eqref{e:Ws_gamma_sub_ov_Wu_gamma} 
 to the Reeb vector field $-X$ of the contact form $-\lambda$.

Let $L\subset\Ws(\gamma)\setminus\gamma$ be a path-connected component containing homoclinics, i.e.
\begin{align*}
L\cap\Wu(\gamma)\neq\varnothing.
\end{align*}
By the $\lambda$-lemma \cite[Prop.~6.1.10]{Fisher:2019vz}, $L$ accumulates on $\Ws(\gamma)$, i.e.\ $\Ws(\gamma)\subset \overline{L}$.
Therefore, in order to prove~\eqref{e:Ws_gamma_sub_ov_Wu_gamma}, it is enough to show that 
\begin{align}
\label{e:L_sub_ov_Wu_gamma}
L\subset \overline{\Wu(\gamma)}. 
\end{align}
We choose a page $\Sigma\in\F$ of the broken book decomposition 
whose intersection with $L$ contains an embedded circle
$S\subset \Sigma\cap L$ that is non-contractible in $L$ and satisfies 
\begin{gather}\label{e:forward_flow_cap_Sigma_empty}
\phi_t(S)\cap \Sigma = \varnothing, \qquad \forall t>0, 
\end{gather}
see Figure~\ref{f:circle}.
Since $S$ is non-contractible in $L$ and transverse to the Reeb vector field $X$, we have
\[
L=\textstyle\bigcup_{t\in\R}\phi_t(S).
\]
Since $\Wu(\gamma)$ is invariant by the Reeb vector field $X$, in order to prove~\eqref{e:L_sub_ov_Wu_gamma} it is enough to show that 
\[S\subset\overline{\Wu(\gamma)}.\]
We will argue by contradiction, assuming that there exists a non-empty path-connected component
\begin{align}
\label{e:ell_subset_Wuc}
 \ell\subset S\setminus\overline{\Wu(\gamma)}.
\end{align}

Since $L$ contains homoclinics, there exists an open interval $\ell_0\subset L$ such that $\ell\subset\ell_0$ and $\partial\ell_0\subset\Wu(\gamma)$.
Since $(N,\lambda)$ satisfies the Kupka-Smale condition, the intersection $\Wu(\gamma)\cap L$ is transverse. This, together with the $\lambda$-lemma \cite[Prop.~6.1.10]{Fisher:2019vz}, implies that the stable manifold $\Ws(\gamma)$ and the unstable manifold $\Wu(\gamma)$ accumulate on themselves in the $C^1$ topology.
In particular, $\Ws(\gamma)$ accumulates on $\ell_0$ in the $C^1$ topology. Therefore, there exists a heteroclinic rectangle $R\subset\interior(\Sigma)$ as in Figure~\ref{f:heteroclinic_rectangle}: $R$ is an open disk with boundary $\partial R=\ell_0\cup\ell_1\cup q_0\cup q_1$, with $\ell_1\subset\Ws(\gamma)$ and $q_0,q_1\subset\Wu(\gamma)$. Moreover, we can choose such a heteroclinic rectangle to be arbitrarily thin (by choosing $\ell_1$ to be $C^1$-close to $\ell_0$), and in particular so that
\begin{align}
\label{e:R_small_area}
 \area(R,d\lambda):=\int_R d\lambda < p,
\end{align}
where $p>0$ is the minimum among the periods of the broken binding orbits, i.e.
\begin{align}
\label{e:minimal_period_broken_binding}
p:=\min\big\{t>0\ \big|\ \Fix(\phi_t)\cap\Kbr\neq\varnothing\big\}.
\end{align}

\begin{figure}
\begin{footnotesize}
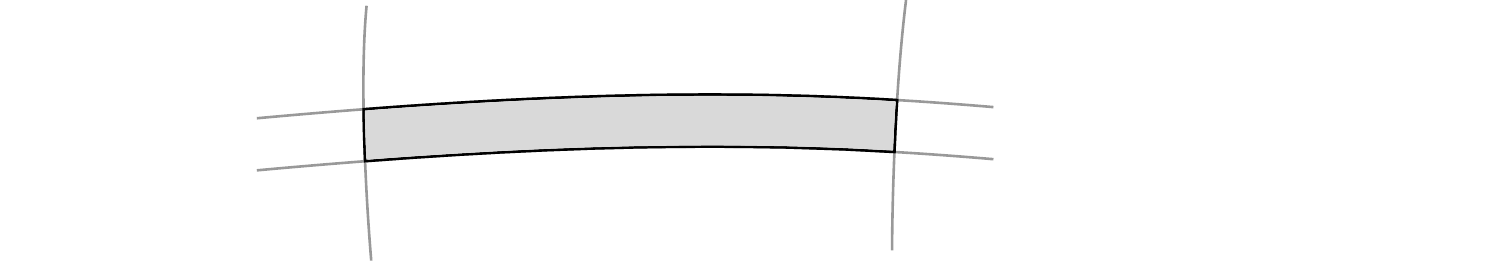
\end{footnotesize}
\caption{The heteroclinic rectangle $R$.}
\label{f:heteroclinic_rectangle}
\end{figure}

For each integer $n\geq0$, we denote by $\tau_n:\Sigma\to(0,\infty]$ the $n$-th return time to the page $\Sigma$. Such functions are defined as $\tau_0\equiv0$ and, for increasing values of $n>0$, 
\begin{align}
\label{e:return_time}
\tau_{n}(z):=\inf\big\{t>\tau_{n-1}(z)\ \big|\ \phi_t(z)\in\Sigma \big\}.
\end{align}
 Analogously, we denote by $\tau_{-n}:\Sigma\to[-\infty,0)$ the $-n$-th return time to the page $\Sigma$, which is defined for increasing values of $n>0$ by
\begin{align}
\label{e:back_return_time}
\tau_{-n}(z):=\sup\big\{t<\tau_{-n+1}(z)\ \big|\ \phi_t(z)\in\Sigma \big\}.
\end{align}
In~\eqref{e:return_time} and~\eqref{e:back_return_time},  we adopt the usual conventions $\inf\varnothing=\infty$ and $\sup\varnothing=-\infty$. For each $n\in\Z$, we denote by $U_n\subset \Sigma$ the open subset over which $\tau_n$ is finite. On every such subset $U_n$, we have a well-defined $n$-th return map
\begin{align*}
 \psi_n:U_n\to U_{-n},
 \qquad
 \psi_n(z)=\phi_{\tau_n(z)}(z),
\end{align*}
which is a diffeomorphism preserving the area form $d\lambda$, i.e.
\begin{align*}
 \psi_n^*(d\lambda|_{U_{-n}})=d\lambda|_{U_n}.
\end{align*}
Notice that $\psi_0=\mathrm{id}$ and $\psi_n^{-1}=\psi_{-n}$.

By~\eqref{e:forward_flow_cap_Sigma_empty}, the forward flowout $\phi_t(\ell_0)$, for $t>0$, does not intersect the page $\Sigma$. 
Since $\ell_1\subset \Ws(\gamma)$, there exists some positive integer $m>0$ such that, for each $z\in\ell_1$, 
the forward orbit $\phi_t(z)$ intersects $\Sigma$ for at most $m$ positive values of $t$. Therefore
\begin{align*}
U_{m+1}\cap(\ell_0\cup\ell_1)=\varnothing.
\end{align*}
Let $A\subset R\setminus\overline{\Wu(\gamma)}$ be the connected component whose boundary contains the open interval $\ell\subset\ell_0$. By Poincar\'e recurrence, 
there exists an arbitrarily large positive integer $n>m$ and a point $z_0\in A\cap U_{n}$ such that $\psi_{n}(z_0)\in A$. 
Let $B\subseteq A\cap U_{n}$ be the connected component containing $z_0$.

We claim that 
\begin{equation}\label{eclaim1}
\partial(\psi_{n}(B))\setminus \overline{\Wu(\gamma)}\subset
\Sigma\setminus U_{-n} 
= 
\big\{\tau_{-n}=-\infty\big\}.
\end{equation}
Indeed, if the claim is false, there exists a point 
$y\in U_{-n}\cap\partial(\psi_n(B))\setminus\overline{\Wu(\gamma)}$.
This implies that $\psi_{-n}(y)\in U_{n}\cap \partial B\setminus \overline{\Wu(\gamma)}$. 
Since $\partial B\subset\partial A\cup \partial U_n$, $\partial A\subset \ell_0\cup \ell_1\cup \overline{\Wu(\gamma)}$, and $n>m$, we obtain the contradiction 
\begin{align*}
\psi_{-n}(y)\in U_n\cap\partial A\setminus \overline{\Wu(\gamma)}
\subset
U_n\cap(\ell_0\cup\ell_1)
\subseteq
U_{m+1}\cap(\ell_0\cup\ell_1)
=\varnothing.
\end{align*}

Next, we claim that
\begin{align}
\label{e:hard}
\partial(\psi_n(B))\subset\overline{\Wu(\gamma)}.
\end{align}
Indeed, assume that there exists $y\in\partial(\psi_i(B))\setminus\overline{\Wu(\gamma)}$. We choose a sequence $y_k\in \psi_n(B)$ such that $y_k\to y$, and a corresponding sequence $w_k\in\partial(\psi_n(B))$ such that 
\begin{align*}
d(y_k,w_k)=\min_{w\in\partial(\psi_n(B))} d(y_k,w)=:r_k. 
\end{align*}
Here, $d:\Sigma\times\Sigma\to[0,\infty)$ denotes the distance induced by a fixed auxiliary Riemannian metric on $\Sigma$. The open Riemannian disk 
\[D_k:=\{w\in\Sigma\ |\ d(y_k,w)< r_k\}\] 
is contained in $\psi_n(B)$. Since $y_k\to y$ and $d(y_k,w_k)\leq d(y_k,y)$, we infer that $w_k\to y$ as well. Since $y\not\in\overline{\Wu(\gamma)}$, we infer that $w_k\not\in\overline{\Wu(\gamma)}$ for all $k$ large enough. We fix one such $w_k$, and consider the radial geodesic
\begin{align*}
 \zeta_0:[0,1]\to \overline{D_k},\qquad\zeta_0(s)=\exp_{y_k}(s  \exp_{y_k}^{-1}(w_k)),
\end{align*}
which joins $\zeta_0(0)=y_k$ and $\zeta_0(1)=w_k$. We define the smooth function 
\begin{align*}
\tau:[0,1)\to(-\infty,0),
\qquad
\tau(s)=\tau_{-n}(\zeta_0(s)), 
\end{align*}
so that $\psi_{-n}(\zeta_0(s))=\phi_{\tau(s)}(\zeta_0(s))$. Since $\zeta_0(1)\not\in U_{-n}=\{\tau_{-n}>-\infty\}$, the function $\tau$ is unbounded. The backward orbit $\phi_{-t}(\zeta_0(1))$ intersects the page $\Sigma$ for at most $n-1$ values of $t>0$. In particular, $\zeta_0(1)$ belongs to the unstable manifold of the broken binding $\Wu(\Kbr)$. If $\ell'\subset \Wu(\Kbr)\cap\Sigma$ is a sufficiently small open neighborhood of $\zeta_0(1)$, for each point $w\in \ell'$ the backward orbit $\phi_{-t}(w)$ intersects the page $\Sigma$ for at most $n-1$ values of $t>0$. This, together with the fact that $D_k\subset \psi_n(B)\subset U_{-n}$,
 implies that $\ell'\cap D_k=\varnothing$, and therefore $\ell'$ has tangent space $T_{\zeta_0(1)}\ell'=T_{\zeta_0(1)}\partial D_k$. Since $\zeta_0$ is a radial geodesic, it intersects the boundary $\partial D_k$ transversely, and therefore we infer the transversality condition 
\[\dot\zeta_0(1)\pitchfork \Wu(\Kbr).\]
This, together with~\cite[Prop.~2.2]{Contreras:2021tg}, implies that the path 
\[\zeta_1=\psi_{-n}\circ\zeta_0:[0,1)\to B\] accumulates on some embedded circle $S'\subset\Ws(\Kbr)\cap\Sigma$, i.e.
\begin{align*}
S'\subset\overline{\zeta_1([0,1))}.
\end{align*}
In particular, $S'$ is contained in the rectangle $\overline R$, and therefore there exists an open disk $D'\subset R$ with boundary $\partial D'=S'$. This, together Lemma~\ref{l:large_area}, implies
\begin{align}
\label{e:area_low_bound_proof}
 \area(R,d\lambda)\geq \area(D',d\lambda) \geq p,
\end{align}
where $p$ is the minimum among the periods of the broken binding orbits, as defined in~\eqref{e:minimal_period_broken_binding}. The lower bound~\eqref{e:area_low_bound_proof} contradicts~\eqref{e:R_small_area}. This concludes the proof of~\eqref{e:hard}.

Summing up, the connected component $B\subset U_n\cap A$ satisfies $\psi_n(B)\cap A\neq\varnothing$ and $\partial(\psi_n(B))\cap A=\varnothing$. Therefore
\begin{align}\label{e:A_psin}
B\subseteq A\subseteq \psi_n(B).
\end{align}
Since $\psi_n$ preserves the area form $d\lambda|_\Sigma$, we conclude
\begin{align*}
 \area(A) = \area(\psi_n(B)).
\end{align*}
We claim that
 \begin{equation}\label{e:ell}
\ell\subset \partial(\psi_n(B)). 
 \end{equation}
Indeed, \eqref{e:A_psin} implies that $\overline{A}\subset\overline{\psi_n(B)}$, 
and therefore
\begin{equation}\label{e:ellapsi}
\ell\subset\partial A\subset\psi_n(B)\cup\partial(\psi_n(B)).
\end{equation}
Suppose that $\ell\cap \psi_n(B)\ne\varnothing$.
Since $\psi_n(B)$ is open and $\ell\subset \partial A$, the open set
$\psi_n(B)\setminus \overline A$ is non-empty. Therefore, we get the following contradiction
\[
\area(\psi_n(B))
=
\area(A) + \area(\psi_n(B)\setminus\overline A)
>
\area(A)
=
\area(\psi_n(B))
.
\] 
We conclude that $\ell\cap \psi_n(B)=\varnothing$, which together with 
\eqref{e:ellapsi} proves \eqref{e:ell}.

All together, \eqref{e:ell} and \eqref{e:hard}
provide the inclusions 
\[
\ell\subset \partial(\psi_n(B))\subset \overline{\Wu(\gamma)},
\]
which contradict~\eqref{e:ell_subset_Wuc}.
\end{proof}

\begin{Lemma}\label{l:homoclinics_heteroclinics}
Let $\alpha,\beta\subset\Kbr$ be broken binding components, and assume that both have homoclinics. 
\begin{enumerate}

\item[$(i)$] If a path-connected component $P\subset\Wu(\alpha)\setminus\alpha$ satisfies $P\cap \Ws(\beta)\ne\varnothing$, then
$\Ws(\alpha)\cap \Wu(\beta)\ne\varnothing$ and $P\cap \Ws(\alpha)\ne \varnothing$.

\item[$(ii)$] If a path-connected component $Q\subset\Ws(\beta)\setminus\beta$ satisfies $Q\cap \Wu(\alpha)\ne \varnothing$, then $\Ws(\alpha)\cap \Wu(\beta)\ne \varnothing$ and $Q\cap \Wu(\beta)\ne \varnothing$.

\end{enumerate}
\end{Lemma}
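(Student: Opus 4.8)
We first observe that part $(ii)$ is part $(i)$ applied to the Reeb flow of $-\lambda$ (whose Reeb vector field is $-X$, and for which stable and unstable manifolds get interchanged), after exchanging the names of $\alpha$ and $\beta$; so it suffices to prove $(i)$. Fix a point $z\in P\cap\Ws(\beta)$; by the Kupka--Smale condition $\Wu(\alpha)\pitchfork\Ws(\beta)$ at $z$. Two applications of the $\lambda$-lemma set the stage. A small $2$-disk $D^u\subset P$ through $z$ is transverse to $\Ws(\beta)$, so its forward flow-out accumulates in the $C^1$ topology on $\Wu_{\mathrm{loc}}(\beta)$; since $P$ is flow-invariant, $P$ accumulates in $C^1$ on $\Wu(\beta)$, and in particular $\Wu(\beta)\subset\overline P$. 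Symmetrically, a small $2$-disk $D^s\subset\Ws(\beta)$ through $z$ is transverse to $\Wu(\alpha)$, so its backward flow-out accumulates in $C^1$ on $\Ws_{\mathrm{loc}}(\alpha)$, whence $\Ws(\beta)$ accumulates in $C^1$ on $\Ws(\alpha)$ and $\Ws(\alpha)\subset\overline{\Ws(\beta)}$. Since $\alpha$ and $\beta$ both have homoclinics, Lemma~\ref{l:closures_Ws_Wu} yields $\overline{\Ws(\alpha)}=\overline{\Wu(\alpha)}$ and $\overline{\Ws(\beta)}=\overline{\Wu(\beta)}$, and combining this with the previous inclusions gives the chain
\[
\overline{\Wu(\alpha)}=\overline{\Ws(\alpha)}\subseteq\overline{\Ws(\beta)}=\overline{\Wu(\beta)}\subseteq\overline P\subseteq\overline{\Wu(\alpha)},
\]
so that all these closed invariant sets coincide; call the common one $\Lambda$.

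Next I claim it is enough to prove the first assertion $\Ws(\alpha)\cap\Wu(\beta)\neq\varnothing$, the second one then being automatic. Indeed, let $y\in\Ws(\alpha)\cap\Wu(\beta)$ (a transverse intersection, by Kupka--Smale), and, after flowing $y$ forward, assume $y\in\Ws_{\mathrm{loc}}(\alpha)$. A small $2$-disk of $\Wu(\beta)$ through $y$ is transverse to $\Ws(\alpha)$, so by the $\lambda$-lemma its forward flow-out accumulates in $C^1$ on $\Wu_{\mathrm{loc}}(\alpha)$; hence $\Wu(\beta)$ accumulates in $C^1$ on $\Wu_{\mathrm{loc}}(\alpha)$, and since $P$ accumulates in $C^1$ on $\Wu(\beta)$ (first paragraph), $P$ accumulates in $C^1$ on $\Wu_{\mathrm{loc}}(\alpha)$ as well. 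Now take a transverse homoclinic of $\alpha$ and flow it backward until it lies on $\Wu_{\mathrm{loc}}(\alpha)$; at that point $\Ws(\alpha)$ crosses $\Wu_{\mathrm{loc}}(\alpha)$ transversely, so a small subdisk of $\Wu_{\mathrm{loc}}(\alpha)$ around it lies on both sides of the local sheet of $\Ws(\alpha)$, and so do its $C^1$-approximating disks inside $P$, which therefore meet $\Ws(\alpha)$. Thus $P\cap\Ws(\alpha)\neq\varnothing$, the second assertion.

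It remains to prove $\Ws(\alpha)\cap\Wu(\beta)\neq\varnothing$, which is the crux. Argue by contradiction: suppose $\Ws(\alpha)\cap\Wu(\beta)=\varnothing$. Since $\Ws(\alpha)\subset\Lambda=\overline{\Wu(\beta)}$, the surface $\Wu(\beta)$ accumulates on $\Ws_{\mathrm{loc}}(\alpha)$ without ever meeting $\Ws(\alpha)$. I would run an area and Poincar\'e-recurrence argument patterned on the proof of Lemma~\ref{l:closures_Ws_Wu}. Let $L$ be the path-connected component of $\Ws(\alpha)\setminus\alpha$ containing a homoclinic of $\alpha$; by the $\lambda$-lemma $L$ accumulates on $\Ws(\alpha)$, hence $\overline L=\Lambda\supseteq\Wu(\beta)$. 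Choose a page $\Sigma$ of the broken book and, exactly as in the proof of Lemma~\ref{l:closures_Ws_Wu}, an embedded circle $S\subset\interior(\Sigma)\cap L$, non-contractible in $L$ and with $\phi_t(S)\cap\Sigma=\varnothing$ for all $t>0$, so that $L=\bigcup_{t\in\R}\phi_t(S)$; since $\Wu(\beta)$ is flow-invariant and $S\subset\Ws(\alpha)$, our assumption forces $S\cap\Wu(\beta)=\varnothing$ while $S\subset\overline{\Wu(\beta)}$. Using the area-preserving $n$-th return maps of $\Sigma$, Poincar\'e recurrence, the $\lambda$-lemma and \cite[Prop.~2.2]{Contreras:2021tg} as in that proof, one produces an arbitrarily thin heteroclinic rectangle $R\subset\interior(\Sigma)$ of $d\lambda$-area strictly less than $p:=\min\{t>0\mid\Fix(\phi_t)\cap\Kbr\neq\varnothing\}$ which must nonetheless contain an embedded circle in $\Wu(\beta)\cap\Sigma$ bounding a disk of $d\lambda$-area $\geq p$ by Lemma~\ref{l:large_area} --- a contradiction. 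I expect this last step to be the main obstacle: the density statement $\Ws(\alpha)\subset\overline{\Wu(\beta)}$ cannot be promoted to an honest intersection point by soft (topological or $\lambda$-lemma) arguments alone, and one genuinely needs the symplectic area estimate of Lemma~\ref{l:large_area} together with recurrence, arranged so that the hypothetical absence of a transverse heteroclinic $\beta\to\alpha$ forces a thin rectangle to swallow a large-area circle.
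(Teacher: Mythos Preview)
Your reduction of $(ii)$ to $(i)$ and your first paragraph are correct; the chain of closures you derive is exactly what the paper also uses. Your second paragraph is correct but more laborious than needed: once $\Ws(\alpha)\cap\Wu(\beta)\neq\varnothing$ is known, the paper gets $P\cap\Ws(\alpha)\neq\varnothing$ in one line from the $\lambda$-lemma (since $P$ accumulates in $C^1$ on $\Wu(\beta)$ and the intersection $\Ws(\alpha)\cap\Wu(\beta)$ is transverse), without routing through $\Wu_{\mathrm{loc}}(\alpha)$ or a homoclinic of~$\alpha$.

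The genuine gap is your third paragraph, which you yourself flag as incomplete. The paper's argument for $\Ws(\alpha)\cap\Wu(\beta)\neq\varnothing$ is quite different from your proposed recurrence scheme, and much shorter. The rectangle is built near $\beta$, not near $\alpha$: on a small transverse disk $D$ through a point of $\beta$ with $\alpha\cap D=\varnothing$, the homoclinic of $\beta$ produces (via the $\lambda$-lemma) a grid of arcs of $\Ws(\beta)$ and $\Wu(\beta)$, and one picks a heteroclinic rectangle $R$ with $\partial R\subset\Ws(\beta)\cup\Wu(\beta)$ and $\area(R,d\lambda)<p$, where $p$ is the period of $\alpha$. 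From your own chain of closures, $\Ws(\alpha)$ comes arbitrarily close to the inner boundary, so $\Ws(\alpha)$ enters $R$; the component $\ell'\subset\overline R\cap\Ws(\alpha)$ is not a circle by Lemma~\ref{l:large_area}, hence an arc, and since $\alpha\cap D=\varnothing$ it must reach $\partial R$. Now comes the decisive observation that your sketch never invokes: \emph{stable manifolds of distinct hyperbolic closed orbits are disjoint}, so $\ell'\cap\Ws(\beta)=\varnothing$ and $\ell'$ is forced to exit through the $\Wu(\beta)$-sides of $\partial R$. This yields $\Ws(\alpha)\cap\Wu(\beta)\neq\varnothing$ directly --- no contradiction, no return maps, no Poincar\'e recurrence.

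By contrast, your proposed adaptation of the proof of Lemma~\ref{l:closures_Ws_Wu} is hard to make precise: in that proof the embedded circle arises in $\Ws(\Kbr)$, produced by unbounded backward return times together with \cite[Prop.~2.2]{Contreras:2021tg}; there is no visible mechanism in your setup that would force a circle of $\Wu(\beta)$ into the thin rectangle, and the contradiction hypothesis $\Ws(\alpha)\cap\Wu(\beta)=\varnothing$ is not hooked into the argument in any concrete way. The missing idea is precisely the disjointness of stable manifolds, which turns the area bound (no circles) into the desired intersection rather than into a contradiction.
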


\begin{proof}
Point (ii) is obtained by applying point (i) to the Reeb vector field $-X$
corresponding to the contact form $-\lambda$. Therefore, we only need to prove point (i).
The statement is tautological if $\alpha=\beta$, so we assume that $\alpha\ne\beta$.

We fix a point $z_0\in\beta$, and an embedded open disk $D\subset N$ transverse to $X$ and containing the point $z_0$. We denote by $\lu\subset\Wu(\beta)\cap D$ and $\ls\subset\Ws(\beta)\cap D$ the path-connected components containing $z_0$, and require $D$ to be small enough so that 
$\alpha\cap D=\varnothing$, and $\lu$ and $\ls$ are embedded 1-dimensional manifolds intersecting only at $z_0$, both separating $D$ into two path-connected components.

By assumption, $\beta$ has a homoclinic, which must be a transverse homoclinic since $(N,\lambda)$ satisfies the Kupka-Smale condition. This, together with the $\lambda$-lemma \cite[Prop.~6.1.10]{Fisher:2019vz}, implies that the stable and unstable manifolds $\Ws(\beta)$ and $\Wu(\beta)$ accumulate on themselves in the $C^1$ topology. In particular, there exist sequences of path-connected components $\lu_n\subset\Wu(\beta)\cap D$ and $\ls_n\subset\Ws(\beta)\cap D$ such that $\lu_n\to\lu$ and $\ls_n\to\ls$ in the $C^1$-topology, thus forming a grid as in Figure~\ref{f:grid}. We set
\begin{align*}
 L:=\bigcup_{n} \,(\lu_n\cup\ls_n),
\end{align*}
and denote by $p>0$ the minimal period of the closed Reeb orbit $\alpha$. We consider a connected component $R_0\subset D\setminus L$ which is surrounded by eight connected components $R_1,...,R_8\subset D\setminus L$, every such connected component $R_j$ being a heteroclinic rectangle: an open disk whose piecewise smooth boundary is the union of two compact segments in $\Wu(\beta)\cap D$ and two compact segments in $\Ws(\beta)\cap D$. We can find such $R_j$'s so that the whole heteroclinic rectangle
\begin{align*}
 R:=\interior( \overline{R_0\cup R_1\cup...\cup R_8} )
\end{align*}
has arbitrarily small area, and in particular so that
\begin{align}
\label{e:R_small_area_last_proof}
 \area(R,d\lambda):=\int_R d\lambda < p.
\end{align}

\begin{figure}
\begin{footnotesize}
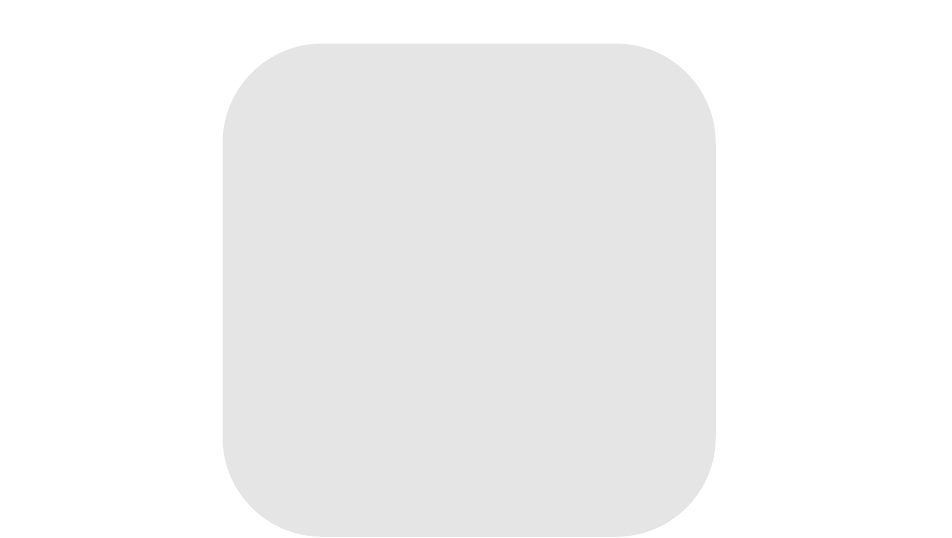
\end{footnotesize}
\caption{The grid in the disk $D$ formed by the stable and unstable manifolds of $\beta$.}
\label{f:grid}
\end{figure}

By Lemma~\ref{l:closures_Ws_Wu}, we have 
\begin{align}
\label{e:oWu=oWs}
\overline{\Wu(\alpha)}=\overline{\Ws(\alpha)}. 
\end{align}
Since there are heteroclinic intersections $P\cap\Ws(\beta)\neq\varnothing$, which are transverse according to the Kupka-Smale condition, the $\lambda$-lemma implies that 
\[\Wu(\beta)\subset\overline{\Wu(\alpha)}.\] 
In particular there exists a point $z\in\partial R_0\cap \overline{\Wu(\alpha)}$, and therefore a point 
\[z'\in R\cap \Ws(\alpha)\] 
according to \eqref{e:oWu=oWs}. Let $\ell'\subset\overline R\cap \Ws(\alpha)$ be the path-connected component containing $z'$, which is an injectively immersed 1-dimensional manifold. Such an $\ell'$ cannot be a circle: if this were the case, it would bound a disk $D'\subset \overline R$; by Lemma~\ref{l:large_area}, we would have $\area(R,d\lambda)\geq\area(D',\lambda)>p$, contradicting~\eqref{e:R_small_area_last_proof}. Therefore $\ell'$ is an injectively immersed interval. Since $D\cap\alpha=\varnothing$, we infer that $\ell'$ cannot be entirely contained in the open disk $R$. Therefore $\ell'\cap\partial R\neq\varnothing$. Notice that $\partial R\subset\Wu(\beta)\cap\Ws(\beta)$. Since the stable manifolds $\Ws(\alpha)$ and $\Ws(\beta)$ are disjoint, we must have $\ell'\cap\partial R\subset\Wu(\beta)$, which proves that
\begin{align}
\label{e:heteroclinic_back}
\Ws(\alpha)\cap\Wu(\beta)\neq\varnothing.
\end{align}
Once again, this intersection is transverse due to the Kupka-Smale condition.

Finally, since there are transverse heteroclinic intersections $P\cap\Ws(\beta)\neq\varnothing$, the $\lambda$-lemma implies that $P$ accumulates on $\Wu(\beta)$ in the $C^1$ topology. This, together with the non-trivial transverse intersection~\eqref{e:heteroclinic_back}, implies that 
\[
\Ws(\alpha)\cap P\neq\varnothing.
\qedhere
\]
\end{proof}

\begin{proof}[Proof of Theorem~\ref{t:homoclinics}]
We denote by $\Gamma$ the family of sequences of arbitrary length \[(\kappa_1,...,\kappa_n),\] where $\kappa_1,...,\kappa_n\subset\Kbr$ are broken binding components such that 
\[\Wu(\kappa_i)\cap \Ws(\kappa_{i+1})\setminus(\kappa_i\cup\kappa_{i+1})\ne\varnothing,\qquad \forall i=1,...,n-1.\] 
This definition readily implies that 
\begin{equation*}
(\kappa_1,\kappa_2,\kappa_3)\in \Gamma,\qquad\forall
(\kappa_1,\kappa_2), (\kappa_2,\kappa_3)\in \Gamma.
\end{equation*}
By the $\lambda$-lemma \cite[Prop.~6.1.10]{Fisher:2019vz}, we have
\begin{gather*}
(\kappa_1,\kappa_n)\in\Gamma,
\qquad
\forall (\kappa_1,\ldots,\kappa_n)\in\Gamma.
\end{gather*}
By Lemma~\ref{l:heteroclinics}, for any $\gamma\in\Kbr$ there are $\alpha,\,\beta\in \Kbr$ such that $(\alpha,\gamma)\in\Gamma$ and
$(\gamma,\beta)\in\Gamma$. 

Now, consider an arbitrary broken binding component $\gamma\subset\Kbr$. The above properties imply that there exists an infinite sequence $(\gamma,\kappa_1,\kappa_2,\ldots)\in\Gamma$ 
starting at $\gamma$. Since $\Kbr$ is finite, there are $n< m$ such that $\kappa_n=\kappa_m=:\beta$, and
therefore $(\gamma,\beta,\beta)\in \Gamma$.
An analogous argument implies that $(\alpha,\alpha,\gamma)\in\Gamma$ for some broken binding component $\alpha\subset\Kbr$, and all together we obtain
\begin{align}
\label{e:aagbb}
 (\alpha,\alpha,\gamma,\beta,\beta)\in\Gamma.
\end{align}
In particular $(\alpha,\beta)\in\Gamma$ and both $\alpha$ and $\beta$ have homoclinics. Lemma~\ref{l:homoclinics_heteroclinics}(i) implies that $(\beta,\alpha)\in\Gamma$. This latter sequence and the one in~\eqref{e:aagbb} imply that $(\gamma,\beta,\alpha,\gamma)\in\Gamma$, and hence \[(\gamma,\gamma)\in\Gamma.\]
This proves that every broken binding orbit has a homoclinic.

It remains to prove that every broken binding component $\alpha\subset\Kbr$ has homoclinics in all the separatrices. Consider an arbitrary path-connected component $P\subset\Wu(\alpha)\setminus\alpha$. By Lemma~\ref{l:heteroclinics}(i), there exists a broken binding component $\beta\subset\Kbr$ such that $P\cap \Ws(\beta)\ne\varnothing$. Since we already know that both $\alpha$ and $\beta$ have homoclinics, we can apply  Lemma~\ref{l:homoclinics_heteroclinics}(i), which gives  $P\cap \Ws(\alpha)\ne \varnothing$, that is, $P$ contains homoclinics. Analogously, using Lemma~\ref{l:heteroclinics}(ii) and Lemma~\ref{l:homoclinics_heteroclinics}(ii), any path-connected component $Q\subset\Ws(\alpha)\setminus\alpha$ contains homoclinics. Finally, the equality $\overline{\Ws(\alpha)}=\overline{\Wu(\alpha)}$ was already established in Lemma~\ref{l:closures_Ws_Wu}.
\end{proof}

\bibliographystyle{amsalpha}
\bibliography{biblio}

\end{document}